\newtheorem*{theorem*}{Theorem}
\newtheorem*{proposition*}{Proposition}
\newtheorem{lemma}[equation]{Lemma}
\newtheorem{corollary}[equation]{Corollary}
\newtheorem*{corollary*}{Corollary}
\newtheorem*{problem*}{Problem}
\newtheorem*{question*}{Question}
\newtheorem*{construction*}{Construction}
\newtheorem*{maintheorem*}{Main Theorem}
\theoremstyle{definition}
\newtheorem{example}[equation]{Example}
\newtheorem*{example*}{Example}
\newtheorem*{definition*}{Definition}
\newtheorem{Family}{Family}
\theoremstyle{remark}
\newtheorem{remark}[equation]{Remark}
\newtheorem*{remark*}{Remark}
\makeatletter\@addtoreset{equation}{section} \makeatother
\newcommand{\mumu}{\boldsymbol{\mu}}
\title{One-dimensional components in the K-moduli of smooth Fano 3-folds}
\author{Hamid Abban, Ivan Cheltsov, Elena Denisova, Erroxe Etxabarri-Alberdi, \\ Anne-Sophie Kaloghiros, Dongchen Jiao,
Jesus Martinez-Garcia, Theodoros Papazachariou}
\let\origmaketitle\maketitle
\def\maketitle{
  \begingroup
  \def\uppercasenonmath##1{} 
  \let\MakeUppercase\relax 
  \origmaketitle
  \endgroup
}
\address{\emph{Hamid Abban}
\newline
\textnormal{School of Mathematical Sciences, University of Nottingham, Nottingham NG7 2RD, UK}
\newline
\textnormal{\texttt{hamid.abban@nottingham.ac.uk}}}
\address{\emph{Ivan Cheltsov}
\newline
\textnormal{School of Mathematics, University of Edinburgh, Edinburgh EH9 3FD,   UK}
\newline
\textnormal{\texttt{i.cheltsov@ed.ac.uk}}}
\address{\emph{Elena Denisova}
\newline
\textnormal{School of Mathematics, University of Edinburgh, Edinburgh EH9 3FD,   UK}
\newline
\textnormal{\texttt{e.denisova@sms.ed.ac.uk}}}
\address{\emph{Erroxe Etxabarri-Alberdi}
\newline
\textnormal{School of Mathematical Sciences, University of Nottingham, Nottingham NG7 2RD, UK}
\newline
\textnormal{\texttt{erroxe.etxabarrialberdi@nottingham.ac.uk}}}
\address{\emph{Anne-Sophie Kaloghiros}
\newline
\textnormal{Department of Mathematics, Brunel University London, Uxbridge UB8 3PH, UK}
\newline
\textnormal{\texttt{anne-sophie.kaloghiros@brunel.ac.uk}}}
\address{\emph{Dongchen Jiao}
\newline
\textnormal{Department of Mathematics, Brunel University London, Uxbridge UB8 3PH, UK}
\newline
\textnormal{\texttt{dongchen.jiao@brunel.ac.uk}}}
\address{\emph{Jesus Martinez-Garcia}
\newline
\textnormal{Department of Mathematical Sciences, University of Essex, Colchester, CO4 3SQ, UK}
\newline
\textnormal{\texttt{jesus.martinez-garcia@essex.ac.uk}}}
\address{\emph{Theodoros Papazachariou}
\newline
\textnormal{School of Mathematics \& Statistics, University of Glasgow, Glasgow G12 8QQ,  UK}
\newline
\textnormal{\texttt{theodorosstylianos.papazachariou@glasgow.ac.uk}}}
\begin{document}

\begin{abstract} By identifying K-polystable limits in 4 specific deformations families of smooth Fano 3-folds, we complete the classification of one-dimensional components in the K-moduli space of smoothable Fano 3-folds.
\end{abstract}

\maketitle


\section{Introduction}

By recent advances in the theory of K-stability, there is a projective moduli space $M^{\mathrm{Kps}}_{n}$
whose closed points parametrise $n$-dimensional K-polystable smoothable Fano varieties over $\mathbb{C}$, see the survey by Xu \cite{Xu} and the references therein.
Components of $M^{\mathrm{Kps}}_{2}$
have been studied in \cite{OdakaSpottiSun}.
The next step is to  analyse components of $M^{\mathrm{Kps}}_{3}$
together with all K-polystable smoothable Fano 3-folds.

According to the classification by Iskovskikh, Mori and Mukai, there are 105 deformation families of smooth Fano 3-folds.
Among these, 27 families contain no K-polystable smooth 3-folds \cite{Book,Fujita2016}, while the general element of the remaining 78 families is K-polystable \cite{Book,Fujita2021}. Note that in one case (\textnumero 2.26 in the Mori-Mukai notation), there are no K-polystble smooth objects, but the family contains a unique singular K-polystable member \cite[Section 5.10]{Book}. To study $M^{\mathrm{Kps}}_{3}$, it would be sensible to work inductively on the dimension of components appearing in this moduli space. Among the 78 families under study, 20 deformation families contain a unique K-polystable smooth object\footnote{In Mori-Mukai notation, these are families numbered 1.15, 1.16, 1.17, 2.27, 2.29, 2.32, 2.34, 3.15, 3.17,
3.19, 3.20, 3.25, 3.27, 4.3, 4.4, 4.6, 4.7, 5.1, 5.3 and 6.1.}, hence their corresponding K-moduli components are isolated points. Next up is to study the one-dimensional components.

\subsection{One-dimensional components in $M^{\mathrm{Kps}}_{3}$}

The goal of this paper is to describe one-dimensional components of $M^{\mathrm{Kps}}_{3}$.
There are 8 families of smooth Fano 3-folds with one-dimensional moduli,
but only 6 of them have K-polystable elements. For these 6 families, all smooth K-polystable members are known \cite{Book,CheltsovPark,Denisova}, and for 2 families,
the singular K-polystable limits are also known. Let us describe K-polystable smooth Fano 3-folds in these 6 deformation families, and introduce their K-polystable singular limits.

\begin{Family} Divisors of bidegree $(1,2)$ in $\mathbb{P}^2\times\mathbb{P}^2$. (\textnumero 2.24 in Mori-Mukai notation)
\label{example:2-24}

For $\lambda\in\mathbb{P}^1$, let $X_\lambda$ be the 3-fold defined  by
$
\{xu^2+yv^2+zw^2=\lambda(xvw+yuw+zuv)\} \subset \mathbb{P}^2\times \mathbb{P}^2,
$
where $([x:y:z],[u:v:w])$ are coordinates on $\mathbb{P}^2\times \mathbb{P}^2$.
Then $X_\lambda$ is smooth if and only if $\lambda^3\neq 1 $ or $\lambda\neq\infty$. By \cite[Lemma 4.70]{Book} $X_\lambda$ are K-polystable  for all $\lambda\in\mathbb{P}^1$. In particular, if $\lambda^3=1$, then $X_\lambda\cong X_{\infty}$, and this 3-fold has $3$ ordinary double points.
Moreover, every K-polystable smooth Fano 3-fold in this family is isomorphic to $X_\lambda$ for some $\lambda\in\mathbb{P}^1$.
Note that the family contains strictly K-semistable smooth members (see \cite[Section 4.7]{Book} for details).
\end{Family}

\begin{Family} Blowups of $\mathbb{P}^3$ along quartic elliptic curves. (\textnumero 2.25 in Mori-Mukai notation)
\label{example:2-25}

For $\lambda\in\mathbb{P}^1$, consider the curve
$
C_\lambda=\big\{x_0^2+x_1^2+\lambda(x_2^2+x_3^2)=0,\lambda(x_0^2-x_1^2)+x_2^2-x_3^2=0\big\}\subset\mathbb{P}^3,
$
where $[x_0:x_1:x_2:x_3]$ are coordinates on $\mathbb{P}^3$, and let $\pi\colon X_\lambda\to\mathbb{P}^3$ be the blowup along $C_\lambda$.
If $\lambda\not\in\{0,\pm 1,\pm i,\infty\}$, then $C_\lambda$ is a smooth elliptic curve,
and $X_\lambda$ is a smooth K-stable Fano 3-fold \cite[Corollary 4.32]{Book}.
Moreover, every smooth Fano 3-fold in this family is isomorphic to $X_\lambda$ for some $\lambda\in\mathbb{P}^1$.
If $\lambda\in\{0,\pm 1,\pm i,\infty\}$, then $C_\lambda$ is a union of $4$ lines,
and $X_\lambda\cong X_0$ is a toric K-polystable smoothable Fano 3-fold; $X_0$ has four singular points, which are ordinary double points.
\end{Family}

\begin{Family} Blowups of $\mathbb{P}^3$ along rational quartic curves. (\textnumero 2.22 in Mori-Mukai notation)
\label{example:2-22}

Let $Q$ be the smooth quadric surface  $\{x_0x_3=x_1x_2\}\subset\mathbb{P}^3$,
where $[x_0:x_1:x_2:x_3]$ are coordinates on~$\mathbb{P}^3$.
Identify $Q$ with $\mathbb{P}^1\times \mathbb{P}^1$ via the isomorphism given by
$$
\big([u:v],[x:y]\big)\rightarrow\big[xu:xv:yu:yv\big],
$$
where $([u:v],[x:y])$ are coordinates on $\mathbb{P}^1\times \mathbb{P}^1$.
Let $C_\lambda\subset Q$ be the curve defined by
$
\{ux^2(x+\lambda y)=vy^2(y+\lambda x)\},
$
for $\lambda\in\mathbb{P}^1$.
One can check that $C_\lambda$ is smooth if and only if $\lambda\not\in\{\pm 1,\infty\}$, and it is then a rational quartic curve.
If $\pi\colon X_\lambda\to \mathbb{P}^3$ is the blowup along $C_\lambda$, then $X_\lambda$ is a smoothable Fano 3-fold. Moreover, every (smooth) member of family \textnumero 2.22 is isomorphic to $X_\lambda$ for some $\lambda\in\mathbb{P}^1$.
The 3-fold  $X_\lambda$ is K-polystable for $\lambda\not\in\{\pm 1,\pm 3,\infty\}$ by \cite{CheltsovPark}.
On the other hand, $X_{\pm 3}$ is strictly K-semistable,
with K-polystable limit $X_0$.

Since $C_{\pm 1}$ is the union of a twisted cubic and a line,
and $C_\infty$ is a union of a conic and two lines,
$X_{\pm 1}$ admits an isotrivial degeneration to $X_\infty$.
Hence, if $X_{\infty}$ is K-polystable, then $X_{\pm 1}$ is strictly K-semistable.
Note that $X_{\infty}$ has two singular points, which are ordinary double points.
\end{Family}

\begin{Family} Blowups of $\mathbb{P}^3$ along the disjoint union of a twisted cubic and a line. (\textnumero 3.12 in Mori-Mukai notation)
\label{example:3-12}

In the notation of Family~\ref{example:2-22}, identify $\mathbb P^1\times \mathbb P^1$ and $C_\lambda$ with subvarieties of $\mathbb{P}^1\times\mathbb{P}^2$ via the embedding
$$
\big([u:v],[x:y]\big)\mapsto\big([u:v],[x^2:xy:y^2]\big).
$$
Let $\pi\colon X_\lambda\to\mathbb{P}^1\times\mathbb{P}^2$ be the blowup along the curve $C_\lambda$, then $X_\lambda$ is a smoothable Fano 3-fold. Further,
every (smooth) member of family \textnumero 3.12 is isomorphic to $X_\lambda$ for some $\lambda\in\mathbb{P}^1\setminus\{\pm 1,\infty\}$.
Moreover, if $\lambda\not\in\{\pm 3,\pm 1,\infty\}$, then $X_\lambda$ is K-polystable \cite{Denisova}.
The smooth Fano 3-fold $X_{\pm 3}$ is strictly K-semistable, with K-polystable limit $X_0$.
Since the (singular) 3-fold $X_{\pm 1}$ admits an isotrivial degeneration to $X_{\infty}$, if $X_\infty$ K-polystable,  $X_{\pm 1}$ is strictly K-semistable.
Note that $X_{\infty}$ has two singular points, which are ordinary double points.
\end{Family}

\begin{Family} Blowups of $\mathbb{P}^1\times\mathbb{P}^1\times\mathbb{P}^1$ along a curve of degree $(1,1,3)$. (\textnumero 4.13 in Mori-Mukai notation)
\label{example:4-13}

For $\lambda\in\mathbb{P}^1$, let $\pi\colon X_\lambda\to (\mathbb{P}^1)^3$ be the blowup along the curve
$$
C_\lambda=\big\{x_0y_1-x_1y_0=0, x_0^3z_0-x_1^3z_1+\lambda x_0x_1(x_1z_0-x_0z_1)=0\big\},
$$
where $([x_0:x_1],[y_0:y_1],[z_0:z_1])$ are the coordinates on $(\mathbb{P}^1)^3$.
If $\lambda\not\in\{\pm 1,\infty\}$, then $X_\lambda$ is a smooth K-polystable Fano 3-fold. Further,
every (smooth) member of family \textnumero 4.13 is isomorphic to $X_\lambda$ for some $\lambda\in\mathbb{P}^1$.
One can show that $X_{\pm 1}\cong X_{\infty}$, and the 3-fold $X_{\infty}$ has two singular points, which are ordinary double points.
\end{Family}

\begin{Family} Complete intersection of divisors of degree $(1,1,0)$, $(1,0,1)$, and $(0,1,1)$ in $\mathbb{P}^2\times\mathbb{P}^2\times\mathbb{P}^2$. (\textnumero 3.13 in Mori-Mukai notation)
\label{example:3-13}

For $\lambda\in\mathbb{P}^1$, let $X_\lambda \subset (\mathbb{P}^2)^3$ be given by
\begin{equation}
\label{equation:3-13-old}
\big\{x_0y_0+x_1y_1+x_2y_2=0, y_0z_0+y_1z_1+y_2z_2=0, (1+\lambda)x_0z_1+(1-\lambda)x_1z_0-2x_2z_2=0\big\},
\end{equation}
where $([x_0:x_1:x_2],[y_0:y_1:y_2],[z_0:z_1:z_2])$ are coordinates on $(\mathbb{P}^2)^3$.
If $\lambda\not\in\{\pm 1,\infty\}$, then $X_\lambda$ is a smooth K-polystable Fano 3-fold. Further,
every (smooth) member of family \textnumero 3.13 is isomorphic to $X_\lambda$ for some $\lambda\in\mathbb{P}^1$.
For $\lambda\in \{\pm 1, \infty\}$, $X_{\lambda}$ is singular ($X_{\pm 1}$ has one ordinary double point and $X_\infty$ is singular along a curve) and K-unstable.

This family can reparametrised in such a way that $X_\lambda$ degenerates to the toric K-polystable Fano $3$-fold $\{x_0y_1=x_1y_0, y_1z_2=y_2z_1, x_0z_2=x_2z_0\}$ when $\lambda\to\pm 1$. Note that this $3$-fold is singular; it has $3$ ordinary double points.
\end{Family}

Now, we are ready to state our main result:

\begin{maintheorem*}
Let $X$ be one of the 3-folds $X_{\infty}$ described in Families \ref{example:2-22}, \ref{example:3-12}, \ref{example:4-13}.
Then $X$ is K-polystable.
In the notation of Family \ref{example:3-13}, let
\begin{equation}
\label{equation:3-13-non-toric}
X_\infty^\prime=\left\{\aligned
&x_2y_3-x_3y_2=0,\\
&y_2z_3-y_3z_2=0,\\
&x_2z_3-x_3z_2=0,\\
&x_1y_1z_3+x_1y_3z_1+x_3y_1z_1+x_3y_2z_3=0,\\
&x_1y_1z_2+x_1y_2z_1+x_2y_1z_1+x_2y_3z_2=0.
\endaligned
\right.
\end{equation}
Then $X_\infty^\prime$ has a unique singular point, which is an ordinary double point, and is a K-polystable limit of elements of family \textnumero 3.13.
\end{maintheorem*}

\begin{corollary}
\label{corollary:limits}
The Fano 3-fold $X_{\infty}$ in Families \ref{example:2-24}, \ref{example:2-25}, \ref{example:2-22}, \ref{example:3-12}, \ref{example:4-13}
are the only singular K-polystable limits of members of the deformation families \textnumero 2.24, 2.25, 2.22, 3.12 and 4.13.
\end{corollary}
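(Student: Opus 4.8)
The plan is to establish K-polystability through the equivariant stability threshold, exploiting that each of the relevant $X_\infty$ carries a positive-dimensional reductive automorphism group. Since the singularities in question are ordinary double points, the 3-folds are Gorenstein terminal and $-K_{X_\infty}$ is Cartier, so the whole $\delta$-invariant machinery applies with its standard normalisations. By the equivariant theory of Li--Xu and Zhuang, a klt Fano is K-polystable if and only if it is K-polystable with respect to test configurations equivariant under a maximal reductive subgroup $G\subseteq\mathrm{Aut}(X_\infty)$; in particular, if the equivariant threshold satisfies $\delta_G(X_\infty)\geq 1$ then $X_\infty$ is K-semistable, and K-polystability follows once one verifies that every $G$-invariant valuation computing $\delta_G=1$ is induced by a one-parameter subgroup of a maximal torus $T\subseteq G$, i.e.\ gives a product test configuration. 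Accordingly, the first step is, for each of Families \ref{example:2-22}, \ref{example:3-12} and \ref{example:4-13}, to compute $\mathrm{Aut}(X_\infty)^\circ$ and fix such a torus $T$ (when $X_\infty$ happens to be toric, K-polystability reduces directly to the barycentre-at-the-origin criterion and no further work is needed).

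The second step is to bound $\delta_T(X_\infty)$ from below by $1$ using the Abban--Zhuang method of flags. I would choose $T$-invariant flags $X_\infty\supset S\supset C\supset\mathsf{p}$, a surface, a curve and a point, adapted to the two ordinary double points, and refine the filtration associated to the flag so as to reduce the $3$-dimensional estimate to computations on a log Fano surface and a curve, where $S_{X_\infty}(\cdot)$ and the pseudoeffective thresholds are controlled by explicit Zariski decompositions. The nodes require separate attention: any flag meeting an ordinary double point must be treated either by a local analytic computation at the singular point or by passing to a small resolution, and one must check that the resulting local contributions do not push the estimate below $1$.

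The third step splits into the polystability refinement and Family \ref{example:3-13}. For the former, I would classify the $T$-invariant divisorial and quasi-monomial valuations $v$ with $A_{X_\infty}(v)=S_{X_\infty}(v)$; since each $X_\infty$ is the central fibre of a degeneration that is isotrivial in the relevant limit, I expect the only minimisers to be the toric ones coming from $N(T)$, which give product configurations and hence K-polystability, after which separatedness of the K-moduli space identifies $X_\infty$ as the genuine limit of the $X_\lambda$. Family \ref{example:3-13} needs genuinely different input, since the naive fibres $X_{\pm1},X_\infty$ are K-unstable. Here the plan is to produce the non-toric $X_\infty'$ of \eqref{equation:3-13-non-toric} as the central fibre of an explicit flat degeneration of the pencil $\{X_\lambda\}$, obtained after a change of coordinates resolving the indeterminacy at $\lambda=\pm1$; then (i) a local Jacobian computation on the five equations shows $X_\infty'$ has exactly one ordinary double point, (ii) one checks it is a $\mathbb{Q}$-Gorenstein smoothable Fano of the correct numerical type, and (iii) one runs the $\delta_G$ estimate and minimiser analysis as above. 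The toric $3$-fold with three nodes then appears as a distinct, more degenerate K-polystable boundary point of the same one-dimensional component, which is consistent with Family \ref{example:3-13} admitting several singular K-polystable limits and thus being excluded from Corollary \ref{corollary:limits}.

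The main obstacle is twofold. First, the passage from K-semistability ($\delta_G\geq1$) to K-polystability: ruling out non-product equivariant degenerations with vanishing Futaki invariant demands an actual classification of the minimising valuations, not merely a single inequality, and the presence of the ordinary double points makes the Zariski-decomposition bookkeeping along flags through the nodes delicate. Second, in Family \ref{example:3-13}, the crux is to exhibit the \emph{correct} degeneration landing on the node-reduced model $X_\infty'$ rather than on a K-unstable fibre or on the more degenerate toric one, and to keep the singularities of the total space under control so that the central fibre is exactly $X_\infty'$ with its single ordinary double point.
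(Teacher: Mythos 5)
There is a genuine gap: your proposal proves (modulo the computations) that each $X_{\infty}$ is K-polystable and that it \emph{is} a limit of smooth members, but the corollary asserts exhaustiveness --- that these are the \emph{only} singular K-polystable limits --- and nothing in your outline rules out some other degeneration of smooth members landing on a different singular K-polystable 3-fold. Separatedness of the K-moduli, which you invoke, only gives uniqueness of the K-polystable limit of a \emph{fixed} one-parameter family; it says nothing about other families. The paper closes this with a short global argument that your proposal lacks entirely: the parametrisation $\{X_\lambda:\lambda\in\mathbb{P}^1\}$ is a $\mathbb{Q}$-Gorenstein family whose fibres are all K-semistable, with the finitely many strictly K-semistable fibres having identified K-polystable replacements ($X_0$ for $X_{\pm 3}$, $X_{\infty}$ for $X_{\pm 1}$ in Family~\ref{example:2-22}); this induces a morphism $\phi\colon\mathbb{P}^1\to M^{\mathrm{Kps}}_{2.22}$ into the relevant component, and since that component is \emph{proper and one-dimensional}, $\phi$ is surjective. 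Surjectivity is what forces every point of the component --- hence every K-polystable limit whatsoever --- to be some $\phi(\lambda)$, and the fibres are explicitly known, so the only singular one is $X_{\infty}$. Without this step (or an equivalent classification of all boundary points), your argument cannot yield the word ``only'' in the statement.

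Two secondary remarks. First, for the K-polystability of $X_{\infty}$ itself, your route via $\delta_G\geqslant 1$ plus a classification of minimising valuations is harder than necessary and is where you yourself locate the main obstacle; the paper instead applies Zhuang's criterion that strict positivity $\beta(\mathbf{E})>0$ for \emph{all} $G$-invariant prime divisors $\mathbf{E}$ over $X$, with $G=\mathrm{Aut}(X)$ reductive, already implies K-polystability, so no analysis of equality cases or of non-product equivariant test configurations is ever needed --- the Abban--Zhuang flag estimates are run on $G$-equivariant small resolutions $\widetilde{X},\overline{X}$ of the two nodes (which exist in all these cases), sidestepping your ``local analytic computation at the singular point''. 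Second, the bulk of your third step concerns Family~\ref{example:3-13} and the 3-fold $X^\prime_{\infty}$ of \eqref{equation:3-13-non-toric}; that material belongs to Corollary~\ref{corollary:3-13}, not to this statement, which deliberately excludes family \textnumero 3.13 --- your closing sentence shows you are aware of this, but it does not substitute for the missing surjectivity argument in the five families the corollary actually covers.
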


\begin{proof}
We only consider Family \ref{example:2-22}, since the proof is similar for other families.
Denote by $M^{\mathrm{Kps}}_{2.22}$ the one-dimensional component of the K-moduli space $M^{\mathrm{Kps}}_3$
that contains all smooth K-polystable Fano 3-folds in Family \ref{example:2-22} (equivalently, all K-polystable elements of Mori-Mukai family \textnumero 2.22).
Above, we described a parametrisation $\big\{X_{\lambda} ; \lambda \in \mathbb{P}^1\big\}$ that is a $\mathbb{Q}$-Gorenstein family, and such that all smooth members of Family \ref{example:2-22} are fibres of the family $X_{\lambda}$ for $\lambda \in \mathbb{P}^1\setminus\{\pm 3,\pm 1,\infty\}$. Note that $X_{\lambda}\cong X_{-\lambda}$ for  $\lambda\in\mathbb{P}^1$.

Moreover, it follows from the description of the Family \ref{example:2-22} above and the Main Theorem
that all objects $X_\lambda$ in the parametrisation except for the 3-folds $X_{\pm 3}$ and $X_{\pm 1}$  are K-polystable. As mentioned already,
the 3-folds $X_{\pm 3}$ and $X_{\pm 1}$ are K-semistable, and their K-polystable limits are $X_0$ and $X_{\infty}$, respectively. Thus we have a morphism $\mathbb{P}^1\to\mathcal{M}^{\mathrm{Kss}}_{2.22}$, the moduli stack parametrising K-semistable objects in this family, which descends to a morphism $\phi\colon \mathbb{P}^1\to M^{\mathrm{Kps}}_{2.22}$ given by $\lambda\mapsto [X_\lambda]$
such than $\phi(0)=\phi(\pm 3)$, $\phi(\infty)=\phi(\pm 1)$, and $\phi(\lambda)=\phi(-\lambda)$ for $\lambda\in\mathbb{P}^1$.
Since $M^{\mathrm{Kps}}_{2.22}$ is proper and one-dimensional,
we conclude that $\phi$ is surjective, which implies the required assertion.
\end{proof}

\begin{corollary}
\label{corollary:3-13}
Singular K-polystable limits of smooth Fano 3-folds
in the Mori-Mukai family \textnumero 3.13 are the toric Fano 3-fold described in Family \ref{example:3-13}
above and the non-toric Fano 3-fold $X^\prime_{\infty}$ defined in \eqref{equation:3-13-non-toric}.
\end{corollary}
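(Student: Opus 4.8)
The plan is to follow the strategy of the proof of Corollary~\ref{corollary:limits}, the only difference being that family \textnumero 3.13 contributes two distinct singular K-polystable limits rather than one. Denote by $M^{\mathrm{Kps}}_{3.13}$ the one-dimensional component of $M^{\mathrm{Kps}}_3$ that contains all smooth K-polystable Fano 3-folds in the Mori--Mukai family \textnumero 3.13; this component is proper and one-dimensional. By the description in Family~\ref{example:3-13}, the fibres $X_\lambda$ for $\lambda\in\mathbb{P}^1\setminus\{\pm1,\infty\}$ form a $\mathbb{Q}$-Gorenstein family whose members are exactly the smooth K-polystable 3-folds in this family, and interchanging the $0$- and $1$-indexed coordinates in each $\mathbb{P}^2$ factor gives isomorphisms $X_\lambda\cong X_{-\lambda}$.

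As in Corollary~\ref{corollary:limits}, I would produce a morphism $\mathbb{P}^1\to\mathcal{M}^{\mathrm{Kss}}_{3.13}$ to the moduli stack of K-semistable objects, descending to a morphism $\phi\colon\mathbb{P}^1\to M^{\mathrm{Kps}}_{3.13}$ with $\phi(\lambda)=[X_\lambda]$ for $\lambda\notin\{\pm1,\infty\}$ and $\phi(\lambda)=\phi(-\lambda)$ throughout. The subtlety is that the fibres over the three punctures $\pm1,\infty$ are singular and K-unstable, so $\phi$ must send them to the K-polystable limits obtained by K-semistable reduction of the family. For $\lambda\to\pm1$ the reparametrisation recorded in Family~\ref{example:3-13} exhibits a $\mathbb{Q}$-Gorenstein degeneration of the nearby smooth members to the toric Fano 3-fold $\{x_0y_1=x_1y_0,\,y_1z_2=y_2z_1,\,x_0z_2=x_2z_0\}$, which is K-polystable, so $\phi(1)=\phi(-1)$ is the class of this toric 3-fold. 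For $\lambda\to\infty$ the Main Theorem supplies the K-polystable limit $X'_\infty$, so $\phi(\infty)=[X'_\infty]$. That these limits are well defined and that $\phi$ is a morphism of schemes follows from the separatedness and properness of the K-moduli space.

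Since $\mathbb{P}^1$ is proper and $\phi$ is dominant onto the irreducible, proper, one-dimensional target $M^{\mathrm{Kps}}_{3.13}$, the map $\phi$ is surjective. Hence every class in $M^{\mathrm{Kps}}_{3.13}$ equals $\phi(\lambda)$ for some $\lambda\in\mathbb{P}^1$, and the only classes represented by singular varieties are $\phi(\pm1)$ and $\phi(\infty)$, namely the toric 3-fold and $X'_\infty$. As the toric limit has three ordinary double points while $X'_\infty$ has a single one, the two are non-isomorphic, and they are precisely the two singular K-polystable limits claimed.

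I expect the main obstacle to be the identification step across the K-unstable fibres: to show concretely that the $\lambda\to\pm1$ K-polystable reduction yields the stated toric model and that the $\lambda\to\infty$ limit is $X'_\infty$, rather than some other K-polystable degeneration. The construction and K-polystability of $X'_\infty$ are precisely what the Main Theorem provides, so the remaining work for this corollary is to confirm that the explicit toric degeneration realises the K-polystable reduction at $\pm1$ and that no additional S-equivalences collapse the two singular classes onto one another or onto a smooth member.
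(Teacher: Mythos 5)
Your skeleton matches the paper's proof: both run the argument of Corollary~\ref{corollary:limits} for the complete-intersection family \eqref{equation:3-13-old}, obtain a morphism $\phi\colon\mathbb{P}^1\to M^{\mathrm{Kps}}_{3.13}$ with $\phi(\lambda)=[X_\lambda]$ for $\lambda\notin\{\pm 1,\infty\}$ which is surjective because the target is proper and one-dimensional, identify $\phi(\pm 1)$ with the toric 3-fold via the reparametrisation recorded in Family~\ref{example:3-13}, and distinguish the two singular limits by their singularities (three ordinary double points versus one, toric versus non-toric).

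However, the step you yourself flag as ``the main obstacle'' --- proving $\phi(\infty)=[X'_\infty]$ --- is a genuine gap in your write-up, and your proposed remedy (computing the K-polystable reduction of the family \eqref{equation:3-13-old} at $\lambda\to\infty$ directly) is neither what the paper does nor necessary. Note that the Main Theorem does \emph{not} say that $X'_\infty$ is the limit at $\lambda\to\infty$ of the parametrisation \eqref{equation:3-13-old}; it says only that $X'_\infty$ is \emph{a} K-polystable limit of members of family \textnumero 3.13, realised in Section~\ref{section:3-13} by the different, Pfaffian family \eqref{equation:3-13-smoothing} at $[a:b]=[0:1]$. So your inference ``the Main Theorem supplies the K-polystable limit $X'_\infty$, so $\phi(\infty)=[X'_\infty]$'' is a non sequitur as written. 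The paper closes the gap by a pigeonhole argument requiring no semistable-reduction computation at $\infty$: the Main Theorem gives $[X'_\infty]\in M^{\mathrm{Kps}}_{3.13}$, so by surjectivity $[X'_\infty]=\phi(\mu)$ for some $\mu\in\mathbb{P}^1$; for every $\lambda\ne\infty$ the class $\phi(\lambda)$ is represented either by a smooth $X_\lambda$ or by the toric 3-fold with three nodes, neither of which is isomorphic to the non-toric, one-node $X'_\infty$; hence $\mu=\infty$. This exclusion step simultaneously disposes of your residual worry about S-equivalences collapsing the singular classes onto each other or onto a smooth member. (The paper additionally observes that all fibres over $\lambda\ne\infty$ have unobstructed $\mathbb{Q}$-Gorenstein deformations, so by \cite[Remark 2.4]{KaloghirosPetracci} the component is smooth at those points and in fact $M^{\mathrm{Kps}}_{3.13}\cong\mathbb{P}^1$; that refinement feeds the following corollary but is not needed for the statement at hand.)
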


\begin{proof}
Let $M^{\mathrm{Kps}}_{3.13}$ be the one-dimensional component of $M^{\mathrm{Kps}}_3$
that contains K-polystable smooth Fano 3-folds in this deformation family.
It follows from the description in Family \ref{example:3-13} that there exists a $\mathbb{Q}$-Gorenstein family of Fano 3-folds over $\mathbb{P}^1$
such that the fibre $X_\lambda$ over $\lambda\in\mathbb{P}^1$
is the complete intersection in $\mathbb{P}^2\times\mathbb{P}^2\times\mathbb{P}^2$ given by \eqref{equation:3-13-old}.
This family contains all smooth K-polystable 3-folds in the family \textnumero 3.13,
which are fibres over the points in $\mathbb{P}^1\setminus\{\pm 1,\infty\}$.
Note that $X_{\lambda}\cong X_{-\lambda}$ for every $\lambda\in\mathbb{P}^1$.
Arguing as in the proof of Corollary~\ref{corollary:limits},
we see that there is a surjective morphism
$\phi\colon \mathbb{P}^1\to M^{\mathrm{Kps}}_{3.13}$ such that $\phi(\lambda)=[X_\lambda]$ for $\lambda\in\mathbb{P}^1\setminus\{\pm 1,\infty\}$,
and $\phi(\pm 1)$ is the K-polystable toric Fano 3-fold described in Family \ref{example:3-13}.

For $\lambda\ne\infty$, the K-polystable Fano 3-fold corresponding to $\phi(\lambda)$ is either smooth or
has ordinary double points, in particular, $X_\lambda$ has unobstructed $\mathbb{Q}$-Gorenstein deformations.
So, it follows from \cite[Remark 2.4]{KaloghirosPetracci} that
$M^{\mathrm{Kps}}_{3.13}$ is smooth at $\phi(\lambda)$ for $\lambda\ne\infty$.
It follows from Main Theorem that
$[X_{\infty}^\prime]\in M^{\mathrm{Kps}}_{3.13}$,
where $X_{\infty}^\prime$ is the 3-fold \eqref{equation:3-13-non-toric}.
But $[X_{\infty}^\prime]\ne\phi(\lambda)$ for $\lambda\ne\infty$, since $X_{\infty}^\prime\not\cong X_\lambda$ for $\lambda\not\in\{0,\infty\}$,
and $X_{\infty}^\prime$ is not isomorphic to the toric Fano 3-fold  described in Family \ref{example:3-13}.
Thus, we conclude that $\phi(\infty)=[X_{\infty}^\prime]$, so that $M^{\mathrm{Kps}}_{3.13}$ is smooth at $\phi(\lambda)$,
which gives $M^{\mathrm{Kps}}_{3.13}\cong\mathbb{P}^1$.
\end{proof}

\begin{corollary}
All one-dimensional components of $M^{\mathrm{Kps}}_{3}$ are isomorphic to $\mathbb{P}^1$.
\end{corollary}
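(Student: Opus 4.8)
The plan is to deduce the statement from the case-by-case results already in place. First I would recall the classification recorded in the introduction: among the $105$ deformation families of smooth Fano $3$-folds, exactly $8$ have one-dimensional moduli, and of these only the six families \textnumero 2.24, 2.25, 2.22, 3.12, 4.13 and 3.13 contain K-polystable members. Since each irreducible component of $M^{\mathrm{Kps}}_3$ is the closure of the locus of K-polystable members of a single Iskovskikh--Mori--Mukai family, and its dimension coincides with the number of moduli of that family, the one-dimensional components of $M^{\mathrm{Kps}}_3$ are precisely the six components $M^{\mathrm{Kps}}_{2.24}$, $M^{\mathrm{Kps}}_{2.25}$, $M^{\mathrm{Kps}}_{2.22}$, $M^{\mathrm{Kps}}_{3.12}$, $M^{\mathrm{Kps}}_{4.13}$ and $M^{\mathrm{Kps}}_{3.13}$. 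It therefore suffices to prove that each of these six is isomorphic to $\mathbb{P}^1$.

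The component $M^{\mathrm{Kps}}_{3.13}$ is settled by Corollary~\ref{corollary:3-13}, so I would run the argument of Corollaries~\ref{corollary:limits} and~\ref{corollary:3-13} uniformly over the remaining five families. In each case the explicit description of the family together with the Main Theorem yields a $\mathbb{Q}$-Gorenstein family $\{X_\lambda\}_{\lambda\in\mathbb{P}^1}$ with K-polystable fibres, inducing a morphism $\phi\colon\mathbb{P}^1\to M^{\mathrm{Kps}}$, $\lambda\mapsto[X_\lambda]$; as the target is proper and one-dimensional, $\phi$ is surjective exactly as argued in Corollary~\ref{corollary:limits}. To upgrade surjectivity to an isomorphism I would verify, as in Corollary~\ref{corollary:3-13}, that every K-polystable member parametrised by these components --- both the smooth fibres and the singular limits $X_\infty$ of Families \ref{example:2-24}--\ref{example:4-13} --- has at worst ordinary double points, hence unobstructed $\mathbb{Q}$-Gorenstein deformations. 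By \cite[Remark 2.4]{KaloghirosPetracci} this makes each $M^{\mathrm{Kps}}$ smooth at every one of its points, so it is a smooth proper curve dominated by $\mathbb{P}^1$; a dominant morphism from a genus-$0$ curve forces the target to have genus $0$, whence $M^{\mathrm{Kps}}\cong\mathbb{P}^1$.

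The main obstacle is the smoothness verification at the singular K-polystable limits. Concretely, one must confirm that no limit acquires singularities worse than nodal --- which would jeopardise both unobstructedness and the applicability of \cite[Remark 2.4]{KaloghirosPetracci} --- and that the finitely many ordinary double points present on each $X_\infty$ (between one and four, depending on the family) still yield unobstructed global $\mathbb{Q}$-Gorenstein deformations of the Fano $3$-fold. Once this local smoothness is established at every boundary point, the remainder is formal: each component is a smooth proper rational curve, and the conclusion $M^{\mathrm{Kps}}\cong\mathbb{P}^1$ follows immediately.
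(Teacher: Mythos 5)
Your proposal is correct and follows the paper's intended argument: the corollary is stated without a separate proof precisely because it is obtained, as you do, by combining the surjections $\phi\colon\mathbb{P}^1\to M^{\mathrm{Kps}}$ from Corollary~\ref{corollary:limits} with the smoothness argument of Corollary~\ref{corollary:3-13}, where ordinary double points give unobstructed $\mathbb{Q}$-Gorenstein deformations so that \cite[Remark 2.4]{KaloghirosPetracci} yields smoothness of each component at every point, and a smooth proper irreducible curve dominated by $\mathbb{P}^1$ must be $\mathbb{P}^1$. The ``main obstacle'' you flag is already discharged by the Main Theorem and the descriptions in Families~\ref{example:2-24}--\ref{example:3-13}, which show every K-polystable member of these six families is smooth or has only ordinary double points, so no gap remains.
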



\subsection{The mirage of GIT}
Explicitly describing the K-moduli for a given deformation family is no easy task at present. In all known cases, the K-moduli space either coincides with some GIT moduli space or is closely related to it by blowing up certain subspaces in the GIT moduli. Out approach to proving the Main Theorem was guided by this phenomenon but with a new hands-on approach. We first write down a parametrisation of the objects and then examine their limits for K-polystability. Hidden in that approach is the hope that the K-polystable limit has the same description as the smooth objects; in other words it lives in the same ambient space with similar defining equations --- that is to say it follows some GIT principle. This works almost perfectly in our studies, and one obtains that the K-moduli space is the same as a suitable natural GIT moduli for Families \ref{example:2-24}--\ref{example:4-13}. Verifying the details of the latter claim could be an interesting research exercise. For instance, the case of Family \ref{example:2-25} is well-studied \cite[\S5]{Papazachariou}. However, as the reader has already observed, the K-polystable limit $X'_\infty$ in Family \ref{example:3-13} is no longer a complete intersection in $\mathbb{P}^2\times\mathbb{P}^2\times\mathbb{P}^2$. Indeed, by a result of Eisenbud--Buchsbaum \cite{BuchsbaumEisenbud} such codimension $3$ subschemes would be given by Pfaffians, rather than being complete intersections. So, we rewrote the parametrisation of the complete intersection of 3 divisors in $\mathbb{P}^2\times\mathbb{P}^2\times\mathbb{P}^2$ that would define the desired (smooth) Fano 3-fold in (redundant) Pfaffian format and studied their Pfaffian limit, which is no longer a complete intersection. This limit is $X'_\infty$ as in the Main Theorem and is K-polystable. Hence, the obvious candidates of GIT compactification (regarded as complete intersection) would not read off the K-moduli in this case. Whether there is another GIT description that coincides with the K-moduli in this case remains to be investigated.

\subsection{Structure of the paper}
In Section~\ref{section:Abban-Zhuang},
we explain the strategy of the proof for the first part of Main Theorem, and provide details in Sections~\ref{section:2-22}, \ref{section:3-12}, \ref{section:4-13}.
In Section~\ref{section:3-13}, we present a very simple geometric construction for the 3-fold \eqref{equation:3-13-non-toric}, and prove the second part of Main Theorem.

\smallskip
\noindent
\textbf{Acknowledgements} We would like to thank Yuchen Liu and Ziquan Zhuang for helpful discussions. This project was initiated at a week-long meeting at the De Morgan House of the London Mathematical Society (LMS), funded by a Focused Research Grant of the Heilbronn Institute.
Several authors are supported by EPSRC grants: Hamid Abban by EP/V048619, Ivan Cheltsov by EP/V054597/1, Anne-Sophie Kaloghiros by EP/V056689/1, and Jesus Martinez-Garcia by EP/V055399/1.

\section{Strategy of the proof}
\label{section:Abban-Zhuang}

In order to prove K-polystability, we use the following powerful result proven by Zhuang in \cite[Corollary~4.14]{Zhuang}: If $G$ is a reductive group acting on the Fano variety $X$ such that $\beta(\mathbf{E})>0$ for all G-invariant prime divisors $\mathbf{E}$ over $X$, then $X$ is K-polystable. By a divisor over $X$, we mean that there exists a birational morphism $\varphi\colon \widehat{X}\to X$ such that $\mathbf{E}$ is a divisor on $\widehat{X}$, and following \cite{Fujita2019}, $\beta(\mathbf{E})$ is defined to be
$$
\beta(\mathbf{\mathbf{E}})=A_X(\mathbf{\mathbf{E}})-S_X(\mathbf{\mathbf{E}}).
$$
In it, $A_X(\mathbf{E})=1+\mathrm{ord}_{\mathbf{E}}\big(K_{\widehat{X}}-\varphi^{*}(K_X)\big)$ is the log discrepancy of the divisor $\mathbf{E}$, and
$$
S_X\big(\mathbf{E}\big)=\frac{1}{(-K_X)^n}\int\limits_0^{\tau(\mathbf{E})}\mathrm{vol}\big(\varphi^*(-K_X)-u\mathbf{E}\big)du,
$$
where
$\tau\big(\mathbf{E}\big)=\mathrm{sup}\big\{u\in\mathbb{R}_{\geqslant 0}\ \vert\ \varphi^*\big(-K_X\big)-u\mathbf{E}\ \text{is big}\big\}$,
and $n$ is the dimension of $X$.
By \cite[Proposition~2.1]{KentoKyoto}, we have that
\begin{equation}
\label{equation:KentoKyoto}
S_X\big(\mathbf{E}\big)\leqslant\frac{n}{n+1}\tau\big(\mathbf{E}\big).
\end{equation}

Let $X$ be one of the singular Fano 3-folds $X_{\infty}$ described in Families \ref{example:2-22}, \ref{example:3-12}, \ref{example:4-13}.
Then $X$ has two isolated ordinary double points, $\mathrm{Aut}^0(X)\cong\mathbb{C}^\ast$, and $\mathrm{Aut}(X)$ is a semi-direct product of $\mathbb{C}^\ast$ and a finite group.
Moreover, in each case  $\mathrm{Aut}(X)$ swaps the singular points of $X$.
In Sections~\ref{section:2-22}, \ref{section:3-12}, \ref{section:4-13},
we will present generators of the group $\mathrm{Aut}(X)$, and we will describe basic geometric facts about $X$.
Set $G=\mathrm{Aut}(X)$.
Then to prove that $X$ is K-polystable, it is enough to show that
$\beta(\mathbf{E})>0$ for every $G$-invariant prime divisor $\mathbf{E}$ over $X$.

Now, let $\varphi\colon\widehat{X}\to X$ be a $G$-equivariant birational morphism with $\widehat{X}$ normal,
and let $\mathbf{F}$ be a $G$-invariant prime divisor in the 3-fold $\widehat{X}$, and $Z=\varphi(\mathbf{F})$ its centre on $X$. Since $G$ swaps singular points of $X$, we have the following possibilities:
$Z$ is a smooth point of $X$,
$Z$ is a $G$-invariant irreducible curve,
$Z$ is a $G$-invariant irreducible surface.

To proceed, it is more convenient to replace $X$ with a suitable $G$-equivariant small resolution.
A priori, such a resolution may not exist, but in all cases considered here, it does, yielding a $G$-equivariant commutative diagram
$$
\xymatrix{
&\widetilde{X}\ar[dr]\ar@{-->}[rr]&&\overline{X}\ar[dl]&\\
&&X&&}
$$
where $\widetilde{X}\to X$ and $\overline{X}\to X$ are small resolutions of singularities of $X$,
and $\widetilde{X}\dasharrow\overline{X}$ is a composition of two Atiyah flops.
Let $Y$ be one of the 3-folds $\widetilde{X}$ or $\overline{X}$,
let $\eta\colon Y\to X$ be the corresponding small $G$-equivariant birational morphism,
and let $Z_Y$ be the centre of the divisor $\mathbf{F}$ on the 3-fold $Y$.
Then $-K_Y\sim\eta^*(-K_X)$, which implies that
$A_X(\mathbf{F})=A_Y(\mathbf{F})$ and $S_X(\mathbf{F})=S_Y(\mathbf{F})$,
where we set
$$
S_Y\big(D\big)=\frac{1}{(-K_Y)^3}\int\limits_0^\infty\mathrm{vol}\big(-K_Y-uD\big)du
$$
for every (not necessarily prime) divisor $D$ over $Y$.

\begin{remark}
\label{remark:Kento-divisorial}
Let $S_1,\ldots,S_r$ be effective divisors on $Y$ such that $S_Y(S_i)<1$ for every $i$.
If~every $G$-invariant prime divisor in $Y$ is linearly equivalent to $\sum\limits_{i=1}^{r}n_iS_i$ for some non-negative integers $n_1,\ldots,n_r$,
then $\beta(S)>0$ for every $G$-invariant prime divisor $S$ in $Y$.
Using \eqref{equation:KentoKyoto}
we can weaken the condition
``every $G$-invariant prime divisor in $Y$ is linearly equivalent to $\sum\limits_{i=1}^{r}n_iS_i$ for some non-negative integers $n_1,\ldots,n_r$''
as follows: for every \mbox{$G$-invariant} prime divisor $D\subset Y$ such that
$-K_Y\sim_{\mathbb{Q}}\frac{4}{3}D+\Delta$ for some effective $\mathbb{Q}$-divisor $\Delta$ on the 3-fold $Y$,
there are non-negative integers $n_1,\ldots,n_r$ such that~$D\sim\sum\limits_{i=1}^{r}n_iS_i$.
Furthermore, using \cite[Proposition~3.2]{Fujita2019alpha}, we can weaken the latter condition slightly as follows:
for every $G$-invariant prime divisor $D\subset Y$ such that
$-K_Y\sim_{\mathbb{Q}}\lambda D+\Delta$
for some rational number $\lambda>\frac{4}{3}$ and some effective $\mathbb{Q}$-divisor $\Delta$ on the 3-fold $Y$,
there are non-negative integers $n_1,\ldots,n_r$ such that $D\sim\sum\limits_{i=1}^{r}n_iS_i$.
\end{remark}

Now, fix a point $P\in Z_Y$ and set
$$
\delta_P\big(Y\big)=\inf_{\substack{\mathbf{E}/Y\\ P\in C_Y(\mathbf{E})}}\frac{A_{Y}(\mathbf{E})}{S_Y(\mathbf{E})},
$$
where the infimum runs over all prime divisors $\mathbf{E}$ over $Y$ whose centre on $Y$ contains $P$.
If $\beta(\mathbf{F})\leqslant 0$ for a divisor $\mathbf{F}$ whose centre contains $P$, then $\delta_P(Y)\leqslant 1$.
Quite often, we can use the inductive argument of Abban and Zhuang \cite{AbbanZhuang}, and its formulation in certain scenarios in \cite{Book}, to show that $\delta_P(Y)>1$.
To do this in the cases we deal with in Families \ref{example:2-22}, \ref{example:3-12}, \ref{example:4-13}, let $\mathscr{C}$ be a smooth irreducible curve in $Y$ that contains $P$,
and let $\mathscr{S}$ be a smooth irreducible surface in $Y$ that contains $\mathscr{C}$.
They provide an admissible flag $P\in\mathscr{C}\subset\mathscr{S}$.
To apply \cite{AbbanZhuang,Book}, set
$$
\tau=\tau\big(\mathscr{S}\big)=\mathrm{sup}\big\{u\in\mathbb{R}_{\geqslant 0}\ \text{the divisor}\ \vert\ -K_Y-u\mathscr{S}\ \text{is big}\big\}.
$$
Next, for every $u\in[0,\tau]$, it is required to find the Zariski decomposition
$$
-K_Y-u\mathscr{S}\sim_{\mathbb{R}} P(u)+N(u),
$$
where $P(u)$ is the positive part of the decomposition, and $N(u)$ is the negative part.
A priori, the Zariski decomposition may not exist on $Y$ for every $u\in[0,\tau]$, but in cases dealt with here, it exists either for $Y=\widetilde{X}$ or for $Y=\overline{X}$.
Hence, we may assume that the required Zariski decomposition exists on $Y$ for every $u\in[0,\tau]$.
For~$u\in[0,\tau]$, set $d(u)=\mathrm{ord}_{\mathscr{C}}(N(u)\vert_{\mathscr{S}})$ and write
$$
N(u)\big\vert_{\mathscr{S}}=N^\prime(u)+d(u)\mathscr{C}
$$
where $N^\prime(u)$ is an effective divisor on $\mathscr{S}$ such that $\mathscr{C}\not\subset\mathrm{Supp}(N^\prime(u))$.
For $u\in[0,\tau]$, set
$$
t(u)=\mathrm{sup}\Big\{v\in\mathbb{R}_{\geqslant 0}\ \big\vert\ \text{the divisor  $P(u)\big\vert_{\mathscr{S}}-v\mathscr{C}$ is pseudo-effective}\Big\}.
$$
Then, for every $v\in[0,t(u)]$, let $P(u,v)$ be the positive part of the Zariski decomposition of~the $\mathbb{R}$-divisor $P(u)\vert_{\mathscr{S}}-v\mathscr{C}$,
and let $N(u,v)$ be its~negative part. Set
$$
S\big(W^S_{\bullet,\bullet};\mathscr{C}\big)=\frac{3}{(-K_X)^3}\int\limits_0^{\tau}d(u)\big(P(u,0)\big)^2du+\frac{3}{(-K_X)^3}\int\limits_0^\tau\int\limits_0^{t(u)}\big(P(u,v)\big)^2dvdu,
$$
which is well defined since the support of $N(u)$ does not contain $\mathscr{S}$ for every $u\in[0,\tau]$.
If $\mathscr{C}=Z_Y$, it follows from \cite{AbbanZhuang,Book} that
\begin{equation}
\label{equation:Kento-curve}
\frac{A_X(\mathbf{F})}{S_X(\mathbf{F})}=\frac{A_Y(\mathbf{F})}{S_Y(\mathbf{F})}\geqslant\min\Bigg\{\frac{1}{S_Y(\mathscr{S})},\frac{1}{S\big(W^\mathscr{S}_{\bullet,\bullet};\mathscr{C}\big)}\Bigg\}.
\end{equation}
Hence, if $\mathscr{C}=Z_Y$, $S_Y(\mathscr{S})<1$ and $S(W^\mathscr{S}_{\bullet,\bullet};\mathscr{C})<1$, then $\beta(\mathbf{F})>0$.
Using this approach, we can show that $\beta(\mathbf{F})>0$ if $Z$ is a $G$-invariant irreducible curve.

\begin{remark}(\cite{AbbanZhuang,Book})
\label{remark:Kento-curve-strict}
In fact, if $\mathscr{C}=Z_Y$, $S_Y(\mathscr{S})<1$ and $S(W^\mathscr{S}_{\bullet,\bullet};\mathscr{C})\leqslant 1$, then  $\beta(\mathbf{F})>0$.
\end{remark}

Now, we observe that $\mathscr{C}\not\subset\mathrm{Supp}(N(u,v))$, and set
$$
F_P\big(W_{\bullet,\bullet,\bullet}^{\mathscr{S},\mathscr{C}}\big)=\frac{6}{(-K_X)^3} \int\limits_0^\tau\int\limits_0^{t(u)}\big(P(u,v)\cdot \mathscr{C}\big)\cdot \mathrm{ord}_P\big(N^\prime(u)\big|_\mathscr{C}+N(u,v)\big|_\mathscr{C}\big)dvdu
$$
and
$$
S\big(W_{\bullet, \bullet,\bullet}^{\mathscr{S},\mathscr{C}};P\big)=\frac{3}{(-K_X)^3}\int\limits_0^\tau\int\limits_0^{t(u)}\big(P(u,v)\cdot \mathscr{C}\big)^2dvdu+F_P\big(W_{\bullet,\bullet,\bullet}^{\mathscr{S},\mathscr{C}}\big).
$$
Then it follows from \cite{AbbanZhuang,Book} that
\begin{equation}
\label{equation:Kento-point}
\frac{A_{Y}(\mathbf{F})}{S_Y(\mathbf{F})}\geqslant
\delta_P\big(Y\big)\geqslant\min\Bigg\{\frac{1}{S_Y(\mathscr{S})},\frac{1}{S\big(W^\mathscr{S}_{\bullet,\bullet};\mathscr{C}\big)},\frac{1}{S\big(W_{\bullet, \bullet,\bullet}^{\mathscr{S},\mathscr{C}};P\big)}\Bigg\}.
\end{equation}
Thus, if $S_Y(\mathscr{S})<1$, $S(W^\mathscr{S}_{\bullet,\bullet};\mathscr{C})<1$ and $S(W_{\bullet, \bullet,\bullet}^{\mathscr{S},\mathscr{C}};P)<1$,
then $\delta_P(Y)>1$ and $\beta(\mathbf{F})>0$.

\begin{remark}[{\cite{AbbanZhuang,Book}}]
\label{remark:Kento-point-strict}
In fact, if $P=Z_Y$, $S_X(\mathscr{S})<1$, $S(W^\mathscr{S}_{\bullet,\bullet};\mathscr{C})\leqslant 1$ and $S(W_{\bullet, \bullet,\bullet}^{\mathscr{S},\mathscr{C}};P)\leqslant 1$,
then we also have $\beta(\mathbf{F})>0$.
\end{remark}

Using this approach, we will show in Sections~\ref{section:2-22}, \ref{section:3-12}, \ref{section:4-13}
that $X_{\infty}$ (in the notation of Families~\ref{example:2-22}, \ref{example:3-12} and  \ref{example:4-13}) is K-polystable. We use similar techniques to treat the case of  $X'_\infty$ in Family \ref{example:3-13} in Section  \ref{section:3-13} .

\section{K-polystability of the Fano 3-fold $X_{\infty}$ in Family \ref{example:2-22}}
\label{section:2-22}

Let $X=X_{\infty}$, where $X_{\infty}$ is described in Family \ref{example:2-22}.
Let $C_{\infty}=C+L_1+L_2$, where $C=\{x_0+x_3=0,x_0x_3=x_1x_2\}$, $L_1=\{x_0=0,x_1=0\}$ and $L_2=\{x_2=0,x_3=0\}$, and let $\gamma\colon V\to \mathbb{P}^3$ be the blowup of the lines $L_1$ and $L_2$,
let $\phi\colon\widetilde{X}\to V$ be the blowup of the proper transform of the conic $C$,
and $\varphi\colon W\to \mathbb P^3$ be the blowup of the conic $C$, and let
$\delta\colon \overline{X}\to W$ be the blowup of the proper transform of the lines $L_1$ and $L_2$.
Then we have the following $G$-equivariant commutative diagram:
$$
\xymatrix{
\widetilde{X}\ar[rr]\ar[d]_{\phi} && X\ar[d]_{\pi} && \overline{X}\ar[ll]\ar[d]^{\delta}\\
V \ar[rr]_{\gamma} && \mathbb{P}^3 && W\ar[ll]^\varphi}
$$
where $\widetilde{X}\to X$ and $\overline{X}\to X$ are $G$-equivariant small resolutions of $X$.
Recall from Section~\ref{section:Abban-Zhuang} that $G=\mathrm{Aut}(X)$,
and either $Y=\widetilde{X}$ or $Y=\overline{X}$.

Recall that $Q=\{x_0x_3=x_1x_2\}\subset\mathbb{P}^3$, and let $E$, $F_1$, $F_2$ be the $\pi$-exceptional surfaces such that $\pi(E)=C$, $\pi(F_1)=L_1$,~\mbox{$\pi(F_2)=L_2$}.
Let $H_C=\{x_0+x_3=0\}$, $H_{C^\prime}=\{x_0=x_3\}$, and denote by $H$ a general plane in $\mathbb{P}^3$,
and denote by $\widetilde{E}$, $\widetilde{F}_1$, $\widetilde{F}_2$, $\widetilde{Q}$, $\widetilde{H}_C$, $\widetilde{H}_{C^\prime}$, $\widetilde{H}$ the proper transforms on $\widetilde{X}$
of the surfaces $E$, $F_1$, $F_2$, $Q$, $H_C$, $H_{C^\prime}$, $H$, respectively.
Then $\widetilde{Q}\sim 2\widetilde{H}-\widetilde{E}-\widetilde{F}_1-\widetilde{F}_2$ and $\widetilde{H}_{C}\sim \widetilde{H}-\widetilde{E}$.
This gives
$$
-K_{\widetilde{X}}\sim 4\widetilde{H}-\widetilde{E}-\widetilde{F}_1-\widetilde{F}_2\sim2\widetilde{Q}+\widetilde{E}+\widetilde{F}_1+\widetilde{F}_2\sim\widetilde{Q}+2\widetilde{H}_{C}+2\widetilde{E}.
$$
Note that $(-K_{\widetilde{X}})^3=(-K_X)^3=30$.
The divisors $\widetilde{H}$, $\widetilde{E}$, $\widetilde{F}_1$, $\widetilde{F}_2$ generate the group $\mathrm{Pic}(\widetilde{X})$.
We have
$\widetilde{H}^3=1$, $\widetilde{H}\cdot\widetilde{F}_1^2=\widetilde{H}\cdot\widetilde{F}_2^2=\widetilde{F}_1\cdot\widetilde{E}^2=\widetilde{F}_2\cdot\widetilde{E}^2=-1$,
$\widetilde{H}\cdot\widetilde{E}^2=\widetilde{F}_1^3=\widetilde{F}_2^3=-2$, $\widetilde{E}^3=-4$,
and all remaining triple intersections are zero.

Similarly, let $\overline{E}$, $\overline{F}_1$, $\overline{F}_2$, $\overline{Q}$, $\overline{H}_C$, $\overline{H}_{C^\prime}$, $\overline{H}$
be the proper transforms on $\overline{X}$ of the surfaces $E$, $F_1$, $F_2$, $Q$, $H_C$, $H_{C^\prime}$, $H$,
respectively. Then
$$
-K_{\overline{X}}\sim 4\overline{H}-\overline{E}- \overline{F}_1-\overline{F}_2\sim
2\overline{Q}+\overline{E}+\overline{F}_1+\overline{F}_2\sim \overline{Q}+2\overline{H}_C+2\overline{E}.
$$
The divisors $\overline{H}$, $\overline{E}$, $\overline{F}_1$, $\overline{F}_2$ generate the group $\mathrm{Pic}(\overline{X})$ and their intersections can be computed as follows:
$\overline{H}^3=1$, $\overline{F}_1^3=\overline{F}_2^3=\overline{F}_1^2\cdot\overline{H}=\overline{F}_2^2\cdot\overline{H}=\overline{F}_1^2\cdot\overline{E}=\overline{F}_2^2\cdot\overline{E}=-1$,
$\overline{E}^2\cdot\overline{H}=-2$, $\overline{E}^3=-6$,
and all remaining triple intersections are zero.

\paragraph*{\bf Description of the automorphism group}
Let $\tau$ be the involution of $\mathbb{P}^3$ given by
$$
[x_0:x_1:x_2:x_3]\mapsto[x_3:x_2:x_1:x_0],
$$
and $\Gamma$ the subgroup of $\mathrm{Aut}(\mathbb{P}^3)$ consisting of automorphisms
$$
[x_0:x_1:x_2:x_3]\mapsto\left[x_0:\lambda x_1:\frac{x_2}{\lambda}: x_3\right]
$$
for $\lambda\in\mathbb{C}^\ast$. Then $\Gamma\cong\mathbb{C}^\ast$,
the curve $C_{\infty}$ is $\langle\tau,\Gamma\rangle$-invariant,
and the $\langle\tau,\Gamma\rangle$-action lifts to~$X$.
Hence, we can identify $\langle\tau,\Gamma\rangle$ with a subgroup in $G=\mathrm{Aut}(X)$.
It is not difficult to verify that $G=\langle \tau,\Gamma\rangle\cong\mathbb{C}^\ast\rtimes\mumu_2$.

\paragraph*{\bf Description of the $G$-invariant loci}

Set $O=[1:0:0:1]$ and $O^\prime=[1:0:0:-1]$.

\begin{lemma}
\label{lemma:2-22-points}
The only $G$-fixed points in $\mathbb{P}^3$ are $O$ and $O^\prime$.
\end{lemma}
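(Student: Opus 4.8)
The plan is to first determine the fixed locus of the subgroup $\Gamma\cong\mathbb{C}^\ast$ and then cut it down by the involution $\tau$. Since $\Gamma$ acts diagonally by $[x_0:x_1:x_2:x_3]\mapsto[x_0:\lambda x_1:\lambda^{-1}x_2:x_3]$, the coordinates $x_0,x_1,x_2,x_3$ are weight vectors of weights $0,1,-1,0$, respectively. First I would record that the $\Gamma$-fixed locus in $\mathbb{P}^3$ is the union of the projectivised weight spaces: the line $\ell=\{x_1=x_2=0\}$ arising from the two-dimensional weight-zero space $\langle x_0,x_3\rangle$, together with the two isolated points $P_1=[0:1:0:0]$ and $P_2=[0:0:1:0]$ arising from the weight $\pm 1$ spaces. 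Concretely, the equality $[a:\lambda b:\lambda^{-1}c:d]=[a:b:c:d]$ for all $\lambda\in\mathbb{C}^\ast$ forces $b=c=0$ when $a$ or $d$ is nonzero, giving a point of $\ell$, and forces one of $b,c$ to vanish when $a=d=0$, giving $P_1$ or $P_2$.

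Next I would intersect this locus with the fixed locus of $\tau$. Since $\tau([0:1:0:0])=[0:0:1:0]$, the points $P_1$ and $P_2$ are interchanged by $\tau$ and are therefore not $G$-fixed. It remains to analyse $\tau$ on the line $\ell$: a point $[a:0:0:d]$ is sent by $\tau$ to $[d:0:0:a]$, and the requirement $[d:0:0:a]=[a:0:0:d]$ forces $a=sd$ and $d=sa$ for some scalar $s$, whence $s^2=1$. The two solutions $s=\pm 1$ yield exactly $O=[1:0:0:1]$ and $O^\prime=[1:0:0:-1]$.

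Combining the two steps shows that the $G$-fixed points of $\mathbb{P}^3$ are precisely $O$ and $O^\prime$. The argument is a direct computation, and its only delicate point is remembering that the nontrivial weight spaces of $\Gamma$ contribute the two isolated fixed points $P_1$ and $P_2$, which must be tested against $\tau$ separately rather than being absorbed into the fixed line $\ell$.
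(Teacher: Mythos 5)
Your proof is correct: computing the fixed locus of $\Gamma\cong\mathbb{C}^\ast$ via the weight decomposition (the line $\{x_1=x_2=0\}$ plus the isolated points $[0{:}1{:}0{:}0]$ and $[0{:}0{:}1{:}0]$) and then intersecting with the fixed locus of $\tau$ is exactly the routine verification the paper intends, since its own proof is simply ``Left to the reader.'' You correctly handle the one delicate point, namely that the isolated $\Gamma$-fixed points are swapped by $\tau$, so only $O$ and $O^\prime$ survive.
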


\begin{proof}
Left to the reader.
\end{proof}

Let $l=\{x_0=0,x_3=0\}$, $l^\prime=\{x_1=0,x_2=0\}$, $C_r=\{x_1x_2=rx_0x_3,x_0+x_3=0\}$,
$C^\prime_r=\{x_1x_2=rx_0x_3,x_0=x_3\}$ for $r\in\mathbb{C}^\ast$.
Then $l=H_C\cap H_{C^\prime}$ is the line that passes through the points $C\cap L_1$ and $C\cap L_2$,
and $l^\prime$ is the line that passes through  $O$ and $O^\prime$.
Note that $C_r$ is an irreducible conic in the plane $H_C$,
and $C_r^\prime$ is an irreducible conic in the plane $H_C^\prime$.
All these curves   are $G$-invariant, and $C=C_1$. Set $C^\prime=C_1^\prime$.

\begin{lemma}
\label{lemma:2-22-curves}
The curves $l$, $l^\prime$, $C_r$, $C_r^\prime$ are the only $G$-invariant irreducible curves in $\mathbb{P}^3$.
\end{lemma}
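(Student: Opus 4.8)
The plan is to exploit the fact that $G=\langle\tau,\Gamma\rangle\cong\mathbb{C}^\ast\rtimes\mumu_2$ is generated by the torus $\Gamma$ and the involution $\tau$. Since invariance of a set under a group is equivalent to invariance under a generating set, an irreducible curve $Z\subset\mathbb{P}^3$ is $G$-invariant if and only if it is simultaneously $\Gamma$-invariant and $\tau$-invariant. So I would first classify all $\Gamma$-invariant irreducible curves, and then cut the list down by imposing invariance under $\tau$.

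The torus $\Gamma\cong\mathbb{C}^\ast$ acts diagonally on the coordinates $x_0,x_1,x_2,x_3$ with weights $0,1,-1,0$. Hence its fixed locus is the union of the pointwise-fixed line $l^\prime=\{x_1=x_2=0\}$ with the two isolated fixed points $[0:1:0:0]$ and $[0:0:1:0]$. If a $\Gamma$-invariant irreducible curve $Z$ is contained in this fixed locus, then $Z=l^\prime$. Otherwise $\Gamma$ acts nontrivially on $Z$, so a general point of $Z$ has a one-dimensional orbit; as $\dim Z=1$ and $Z$ is irreducible, $Z$ must be the closure of a single $\Gamma$-orbit. The ring of $\Gamma$-invariant polynomials is generated by $x_0$, $x_3$ and $x_1x_2$, so along any orbit the ratios $x_0:x_3$ and $x_1x_2:x_0x_3$ are constant. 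I would use this to show that each orbit closure lies in a member of the pencil of planes $\{\alpha x_0+\beta x_3=0\}$ through the line $l=\{x_0=x_3=0\}$, and is cut out on it by a quadric $\{x_1x_2=r\,x_0x_3\}$.

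Running through the possible vanishings of the coordinates of an orbit representative then yields the full list of $\Gamma$-invariant irreducible curves: the line $l$ (orbits with $x_0=x_3=0$); lines joining a point of $l^\prime$ to one of the isolated fixed points (orbits with exactly one of $x_1,x_2$ zero); and conics $\{x_1x_2=r\,x_0x_3\}\cap\{\alpha x_0+\beta x_3=0\}$ lying in a plane of the pencil, where irreducibility forces $r\neq 0$ (for $r=0$ the conic splits into two lines). Finally I would impose $\tau$-invariance. The involution $\tau$ fixes both $l$ and $l^\prime$ and swaps the two isolated fixed points $[0:1:0:0]\leftrightarrow[0:0:1:0]$; on the pencil of planes through $l$ it acts by $\alpha x_0+\beta x_3\mapsto\beta x_0+\alpha x_3$, whose only fixed members are $H_C=\{x_0+x_3=0\}$ and $H_{C^\prime}=\{x_0=x_3\}$. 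Consequently each line joining $l^\prime$ to an isolated fixed point is carried by $\tau$ to a distinct line (it would have to pass through both isolated fixed points to be fixed, i.e.\ to equal $l$), and a pencil-plane conic is $\tau$-invariant precisely when its plane is $H_C$ or $H_{C^\prime}$, giving exactly the families $C_r$ and $C_r^\prime$. Together with $l$ and $l^\prime$ this is the asserted list.

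I expect the main technical obstacle to be the bookkeeping in the orbit-closure classification: reliably enumerating the degenerate orbits according to which coordinates of a representative vanish, and confirming in each excluded case that $\tau$ genuinely moves the curve, rather than any single hard computation.
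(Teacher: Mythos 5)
Your overall strategy --- classify the $\Gamma$-invariant irreducible curves as orbit closures via the invariant ring, then impose $\tau$-invariance --- is viable and genuinely different from the paper's argument. The paper shortcuts the orbit bookkeeping entirely: a $G$-invariant curve not pointwise fixed by $\Gamma$ carries an effective $\mathbb{C}^\ast$-action and is therefore rational, so the involution $\tau$ fixes a point $P$ on it that is not $\Gamma$-fixed, whence the curve equals $\overline{\mathrm{Orb}_{\Gamma}(P)}$ for $P$ in the $\tau$-fixed locus $\{[b:a:a:b]\}\cup\{[b:a:-a:-b]\}$; one then only takes orbit closures of points on these two lines. Your route buys a complete picture of all $\Gamma$-invariant curves, at the cost of exactly the degenerate-orbit bookkeeping you yourself flag as the likely obstacle --- and that is where your write-up has a genuine slip.

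Concretely, your claimed full list of $\Gamma$-invariant irreducible curves is incomplete: you omit the orbit closures of points with exactly one of $x_0,x_3$ zero and $x_1x_2\neq 0$, namely the irreducible conics $\{x_0=0,\ x_1x_2=c\,x_3^2\}$ and $\{x_3=0,\ x_1x_2=c\,x_0^2\}$ with $c\in\mathbb{C}^\ast$, which pass through both isolated $\Gamma$-fixed points $[0:1:0:0]$ and $[0:0:1:0]$. These are not of your form $\{x_1x_2=r\,x_0x_3\}\cap\{\alpha x_0+\beta x_3=0\}$ for any $r$: on the planes $x_0=0$ and $x_3=0$ the quadric $x_1x_2=r\,x_0x_3$ restricts to $x_1x_2=0$ regardless of $r$ (which also shows that irreducibility of your pencil-plane conics requires $\alpha\beta\neq 0$ in addition to $r\neq 0$). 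The source of the error is that the ratio $x_1x_2:x_0x_3$ degenerates on such orbits; the correct invariant-theoretic statement is that the degree-two ratios $[x_0^2:x_0x_3:x_3^2:x_1x_2]$ are constant along orbits, and running the case analysis with these produces the extra conics. The omission happens to be harmless for the lemma: $\tau$ swaps the planes $x_0=0$ and $x_3=0$, so these conics are not $\tau$-invariant and are discarded in your second step just like the joining lines. But as written your intermediate classification is false, and the proof needs this case added before the $\tau$-invariance cut is applied.
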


\begin{proof}
Let $\mathcal{C}$ be a $G$-invariant irreducible curve in $\mathbb{P}^3$.
If $\mathcal{C}$ is pointwise fixed by $\Gamma$, then $\mathcal C=l^\prime$.
We may assume that $\mathcal{C}\ne l^\prime$.
Then $\Gamma$ acts on $\mathcal{C}$ effectively, which implies that $\mathcal{C}$ is rational.
Then $\tau$ must fix a point $P\in\mathcal{C}$, which is not fixed by $\Gamma$,
which implies that $\mathcal{C}=\overline{\mathrm{Orb}_{\Gamma}(P)}$.
On the other~hand, the $\tau$-fixed points are
$[b:a:a:b]$ and $[b:a:-a:-b]$ for $[a:b]\in\mathbb{P}^1$,
which implies the required assertion.
\end{proof}

Thus, the planes $H_C$ and $H_{C^\prime}$ contain all $G$-invariant irreducible curves in $\mathbb{P}^3$ except $l^\prime$.
To complete the description of $G$-invariant curves in $X$, we have to describe $G$-invariant irreducible curves in $E$,
which is done in the following lemma:

\begin{lemma}
\label{lemma:2-22-curves-E}
The only $G$-invariant irreducible curves in $\widetilde{E}$ are  $\widetilde{E}\cap\widetilde{Q}$ and $\widetilde{E}\cap\widetilde{H}_C$,
and the only $G$-invariant irreducible curves in $\overline{E}$ are $\overline{E}\cap\overline{Q}$ and $\overline{E}\cap\overline{H}_C$.
\end{lemma}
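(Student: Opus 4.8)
The plan is to analyse $\widetilde E$ as a ruled surface over $C$ using the torus $\Gamma\cong\mathbb C^\ast\subset G$, and then to impose invariance under the involution $\tau$. Since blowing up a smooth centre is functorial, the ruling $\nu\colon\widetilde E\to C$ realising $\widetilde E=\mathbb P(N_{\widehat C/V})$ (where $\widehat C$ is the proper transform of $C$ in $V$) is $G$-equivariant. Parametrising $C$ by $s\mapsto[1:s:-s^{-1}:-1]$, one finds that $\Gamma$ acts on the base by $s\mapsto\lambda s$, with fixed points $C\cap L_1$ (at $s=0$) and $C\cap L_2$ (at $s=\infty$), while $\tau$ acts by $s\mapsto s^{-1}$ and swaps them. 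Hence, for a $G$-invariant irreducible curve $\Sigma\subset\widetilde E$, the image $\nu(\Sigma)$ is a $G$-invariant subset of $C$; a fibre of $\nu$ is $\Gamma$-invariant only over $C\cap L_1$ or $C\cap L_2$, and these two fibres are interchanged by $\tau$, so no fibre is $G$-invariant and $\Sigma$ must dominate $C$.

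The crucial point is to determine the $\Gamma$-action on $\widetilde E$. In the affine chart at $C\cap L_1=[0:0:1:0]$ the coordinates $(x_0/x_2,x_1/x_2,x_3/x_2)$ carry $\Gamma$-weights $(1,2,1)$; after blowing up $L_1$ the chart $(x_0/x_2,\,x_1/x_0,\,x_3/x_2)$ adapted to $\widehat C$ carries weights $(1,1,1)$, so $\Gamma$ acts near $\widehat C\cap F_1$ simply as the scalar $\lambda$, and likewise as a scalar near $\widehat C\cap F_2$. Consequently $\Gamma$ fixes the two fibres $\widetilde E_{P_1}$ and $\widetilde E_{P_2}$ of $\nu$ pointwise. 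A clutching computation then shows that a fibrewise-trivial $\mathbb C^\ast$-action over both special fibres can exist only on $\mathbb F_0$; indeed $\deg N_{\widehat C/V}=-K_V\cdot\widehat C-2=4$ forces the balanced splitting $N_{\widehat C/V}\cong\mathcal O_C(2)^{\oplus 2}$. Thus $\widetilde E\cong C\times\mathbb P^1$ with $\Gamma$ acting only on the first factor, and the $\Gamma$-invariant irreducible curves are exactly the two special fibres $\{s=0\}\times\mathbb P^1$ and $\{s=\infty\}\times\mathbb P^1$ together with the sections $C\times\{q\}$, $q\in\mathbb P^1$.

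It remains to impose $\tau$-invariance. As $\tau$ swaps the two special fibres, neither is $G$-invariant; and since $\tau$ normalises $\Gamma$, it permutes the sections $\{C\times\{q\}\}$, inducing an involution $\iota$ of the second factor $\mathbb P^1$ whose fixed points are the $G$-invariant sections. Now $Q$ and $H_C$ are both $G$-invariant (each is preserved by $\Gamma$ and by $\tau$), smooth along $C$, and contain $C$, so $\widetilde Q$ and $\widetilde H_C$ meet $\widetilde E$ in two $G$-invariant sections; these are distinct, in fact disjoint, because $Q\cap H_C=C$ is a reduced conic and hence the normal directions of $Q$ and $H_C$ along $C$ never coincide. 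A direct check that $\tau$ acts nontrivially on the second factor---equivalently on the fibre over the $\tau$-fixed point $[1:1:-1:-1]$---shows that $\iota$ is nontrivial and so has exactly two fixed points, which must be the sections cut by $\widetilde Q$ and $\widetilde H_C$. This identifies $\widetilde E\cap\widetilde Q$ and $\widetilde E\cap\widetilde H_C$ as the only $G$-invariant irreducible curves in $\widetilde E$.

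An analogous analysis applies to $\overline E\subset\overline X$: using the order of blowups defining $\overline X$ (first $C$, then $L_1,L_2$) to compute the $\Gamma$-action on the ruled surface $\overline E\to C$, one again finds two special fibres swapped by $\tau$ and a family of sections that $\tau$ cuts down to the two $G$-invariant ones $\overline E\cap\overline Q$ and $\overline E\cap\overline H_C$. I expect the main obstacle to be the weight computation of the second paragraph: the statement hinges on $\Gamma$ fixing both special fibres pointwise, and it is precisely the blowups of $L_1$ and $L_2$ through the $\Gamma$-fixed points of $C$ that render the local action scalar and thereby force the product structure. Were a special fibre not pointwise fixed, the family of invariant sections---and hence the conclusion---would change.
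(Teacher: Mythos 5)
Your treatment of $\widetilde E$ is correct, and it takes a genuinely different route from the paper. The paper never analyses $\widetilde E$ intrinsically: it works on $\overline E$, using the $G$-equivariant birational projection $\upsilon$ of $W$ onto the quadric $Y\subset\mathbb{P}^4$, which contracts the transform of $H_C$ and maps the exceptional surface over $C$ onto the quadric cone $S_2=\{t+x=yz-tx=0\}$; the invariant curves are then read off in explicit coordinates on the cone (the only $G$-invariant irreducible curve there is the conic $\{w=0\}$, the $(-2)$-curve having been contracted to the vertex). You instead compute the $\Gamma$-weights: your chart computation at $C\cap L_1$ (weights $(1,2,1)$ becoming $(1,1,1)$ after blowing up $L_1$) is right, so $\Gamma$ does fix the fibres of $\widetilde E\to\widehat C$ over $q_i=\widehat C\cap F_i$ pointwise, and your clutching argument correctly forces $\mathbb{F}_0$: for $\mathbb{P}(\mathcal{O}(a)\oplus\mathcal{O}(b))$ with $a\neq b$ the fibrewise weights at the two base-fixed points differ by $a-b$, so they cannot both vanish. (One can confirm the balanced splitting $N_{\widehat C/V}\cong\mathcal{O}(2)^{\oplus 2}$ independently: $N_{C/\mathbb{P}^3}\cong\mathcal{O}(2)\oplus\mathcal{O}(4)$ with the $\mathcal{O}(4)$-directions tangent to $H_C$, and since $L_i\not\subset H_C$ meets $H_C$ transversally at $p_i$, the two elementary transformations are in directions off the $\mathcal{O}(4)$-fibre, and a degree-$3$ or $4$ line subbundle of the transform would have to sit inside $\mathcal{O}(4)(-p_1-p_2)=\mathcal{O}(2)$, a contradiction.) Your deferred ``direct check'' that $\tau$ acts nontrivially on the second factor is also fine: at $P=[1:1:-1:-1]$ the involution acts on $N_{C/\mathbb{P}^3,P}$ with eigenvalues $-1$ and $+1$ (the eigendirections being exactly the normal directions of $H_C$ and $Q$), so $\iota\neq\mathrm{id}$ and your count of two $G$-invariant sections goes through.

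The genuine weak point is your last paragraph on $\overline E$. The analysis there is \emph{not} analogous, and the claimed ``family of sections that $\tau$ cuts down to the two $G$-invariant ones'' conceals precisely the computation on which the statement depends. Indeed $\delta(\overline E)\cong\mathbb{F}_2$ (this is \cite[Lemma 2.6]{CheltsovSuess}, as the paper notes): the splitting of $N_{C/\mathbb{P}^3}$ is unbalanced, so $\Gamma$ acts \emph{nontrivially} on the generic fibre of $\overline E\to C$, there is no product structure, and the $\Gamma$-invariant irreducible curves are not ``all sections'' but the $(-2)$-curve together with a pencil of orbit closures --- in the paper's cone coordinates, the plane sections $\{t=cw\}\cap S_2$, $c\in\mathbb{C}^\ast\cup\{\infty\}$. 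One must then compute that $\tau$ sends $\{t=cw\}$ to $\{t=-cw\}$, so the only irreducible $\tau$-fixed member of the pencil is $c=\infty$ (namely $\overline E\cap\overline Q$); had the induced involution on the pencil been, say, $c\mapsto\alpha/c$, there would be extra $G$-invariant curves and the lemma would fail. You have not done this computation, and transplanting your $\widetilde E$ argument verbatim would be wrong since the fibrewise action is nontrivial. The quickest repair is not to redo the analysis at all but to deduce the $\overline E$ statement from your $\widetilde E$ statement: the flop $\widetilde X\dasharrow\overline X$ is $G$-equivariant and induces a $G$-equivariant morphism $\overline E\to\widetilde E$ contracting the pair of $(-1)$-curves $\overline g_1,\overline g_2$, which are interchanged by $\tau$; hence every $G$-invariant irreducible curve in $\overline E$ is the proper transform of one in $\widetilde E$, and you are done by the first part.
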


\begin{proof}
Note that $\delta(\overline{E})\cong\mathbb{F}_2$ by \cite[Lemma 2.6]{CheltsovSuess}, and $\delta(\overline{E}\cap\overline{H}_C)$ is the $(-2)$-curve in $\overline{E}$.
Let $\varsigma\colon\mathbb{P}^3\dasharrow\mathbb{P}^4$ be the $G$-equivariant map
$$
[x_0:x_1:x_2:x_3]\mapsto\big[x_0(x_0+x_3):x_1(x_0+x_3):x_2(x_0+x_3):x_3(x_0+x_3):x_0x_3-x_1x_2\big],
$$
and let $Y$ be the closure of its image in $\mathbb{P}^4$.
Then $Y$ is the quadric $\{tw-tx+wx+yz=0\}$,
where $[x:y:z:t:w]$ are coordinates on $\mathbb{P}^4$.
Then $\varsigma$ induces a $G$-equivariant birational map $\sigma\colon\mathbb{P}^3\dasharrow Y$
such that there exists the following $G$-equivariant commutative diagram:
$$
\xymatrix@R=6mm{
&W\ar[dl]_{\varphi}\ar[dr]^\upsilon&\\
\mathbb{P}^3\ar@{-->}[rr]^{\sigma}&& Y}
$$
where $\upsilon$ is the contraction of $\gamma(\overline{H}_C)$ to $[0:0:0:0:1]$.
Set $S_2=\upsilon\circ\gamma(\overline{E})$. Then
$$
S_2=\big\{t+x=0,yz-tx=0\big\}\subset Y,
$$
and $\upsilon$ induces a $G$-equivariant birational morphism $\gamma(\overline{E})\to S_2$
that contracts $\gamma(\overline{E}\cap\overline{H}_C)$.
Moreover, one can check that the only $G$-invariant irreducible curve in the cone $S_2$ is the conic $\{w=t+x=yz-tx=0\}$,
which is the image of the curve $\overline{E}\cap\overline{Q}$. This implies the required assertion.
\end{proof}

In order to use Remark \ref{remark:Kento-divisorial}, we need the following result.

\begin{lemma}
\label{lemma:2-22-Eff}
Let $S$ be a $G$-invariant $G$-irreducible surface in $\widetilde{X}$ such that $S\ne \widetilde{F}_1+\widetilde{F}_2$.
Then $S\sim a\widetilde{Q}+b\widetilde{H}_C+c\widetilde{E}$ for some non-negative integers $a$, $b$, $c$.
\end{lemma}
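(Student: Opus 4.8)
The plan is to reduce the statement to a multiplicity bound for surfaces in $\mathbb P^3$. Write $\rho=\gamma\circ\phi\colon\widetilde X\to\mathbb P^3$ for the ($G$-equivariant) birational morphism. First I would dispose of the case where $S$ is $\rho$-exceptional: the only $\rho$-exceptional prime divisors on $\widetilde X$ are $\widetilde E$, $\widetilde F_1$, $\widetilde F_2$, and since $\tau$ swaps $\widetilde F_1,\widetilde F_2$ and fixes $\widetilde E$, the only $G$-invariant $G$-irreducible surfaces among their sums are $\widetilde E=1\cdot\widetilde E$ and $\widetilde F_1+\widetilde F_2$; as the latter is excluded by hypothesis, this case is done. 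Hence I may assume $S$ is not $\rho$-exceptional, so $S=\widetilde T$ is the proper transform of $T=\rho(S)\subset\mathbb P^3$, a $G$-invariant $G$-irreducible surface of some degree $d$. Computing the proper transform through the two blowups gives $S\sim d\widetilde H-m\widetilde E-n_1\widetilde F_1-n_2\widetilde F_2$ with $m=\mathrm{mult}_C(T)\geqslant 0$ and $n_i=\mathrm{mult}_{L_i}(T)\geqslant 0$; since $\tau\in G$ interchanges $L_1$ and $L_2$, the $G$-invariance of $S$ forces $n_1=n_2=:n$.

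Next I would solve the linear system. From $\widetilde Q\sim 2\widetilde H-\widetilde E-\widetilde F_1-\widetilde F_2$ and $\widetilde H_C\sim\widetilde H-\widetilde E$, requiring $S\sim a\widetilde Q+b\widetilde H_C+c\widetilde E$ yields
$$a=n,\qquad b=d-2n,\qquad c=d-m-n.$$
So the lemma is equivalent to the inequalities $n\geqslant 0$, $d\geqslant 2n$ and $d\geqslant m+n$. The first is automatic, and all three follow at once from the single sharper estimate $d\geqslant 2n+m$, since then $b=d-2n\geqslant m\geqslant 0$ and $c=d-m-n\geqslant n\geqslant 0$.

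The geometric heart of the proof is this estimate $d\geqslant 2n+m$, which I would obtain by intersecting $T$ with a suitable line. On $Q\cong\mathbb P^1\times\mathbb P^1$ the lines $L_1$, $L_2$ are fibres of one ruling and $C$ is a $(1,1)$-curve, so a line $R$ of the other ruling --- concretely, the line joining $[u:v:0:0]$ and $[0:0:u:v]$ --- meets each of $L_1$, $L_2$, $C$ in a single point, and for general $R$ these three points are distinct and general on the respective curves. If $T\neq Q$, then $Q\not\subseteq T$ (otherwise the $G$-irreducibility of $T$ would force $T=Q$), so a general such $R$ is not contained in $T$; computing in $\mathbb P^3$ and using $(T\cdot R)_p\geqslant\mathrm{mult}_p(T)$ (valid because $R$ is smooth) at the three intersection points then gives $d=T\cdot R\geqslant\mathrm{mult}_{L_1}(T)+\mathrm{mult}_{L_2}(T)+\mathrm{mult}_C(T)=2n+m$. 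The case $T=Q$ is checked directly: there $d=2$, $m=n=1$, and $S=\widetilde Q=1\cdot\widetilde Q$ already has the required form.

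I expect the one genuinely delicate point to be this degenerate case $T=Q$. The bound $d\geqslant 2n+m$ in fact fails for $Q$ itself (there $d=2<3=2n+m$), precisely because the test lines $R$ lie on $Q$ and can no longer be chosen off $T$; recognising that $Q$ is the unique surface on which the test family degenerates, and handling it separately, is the crux. Everything else --- the bookkeeping of the exceptional divisors, the passage to multiplicities through the two blowups, and the elementary local inequality $(T\cdot R)_p\geqslant\mathrm{mult}_p(T)$ at a point where $R$ is smooth --- is routine.
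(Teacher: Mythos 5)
Your proof is correct and takes essentially the same route as the paper's: both reduce to writing $S\sim d\widetilde{H}-m\widetilde{E}-n(\widetilde{F}_1+\widetilde{F}_2)$ and establishing the key inequality $d\geqslant m+2n$ by intersecting with a general ruling of $Q$ meeting $L_1$, $L_2$, $C$ once each --- the paper computes $0\leqslant S\cdot\ell=d-m-2n$ upstairs with the proper transform $\ell$ of the ruling, which is the same estimate you obtain downstairs via $(T\cdot R)_p\geqslant\mathrm{mult}_p(T)$. Your separate treatment of the degenerate case $T=Q$ corresponds exactly to the paper's exclusion of $S=\widetilde{Q}$ (together with $\widetilde{E}$ and $\widetilde{H}_C$, which are trivially of the required form) at the outset.
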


\begin{proof}
We may assume that $S\ne\widetilde{E}$,
$S\ne\widetilde{H}_{C}$, $S\ne\widetilde{Q}$.
Then $\pi(S)$ is a $G$-invariant surface  of degree $d\geqslant 1$,
and $S\sim d\widetilde{H}-m\widetilde{E}-n(\widetilde{F}_1+\widetilde{F}_2)$
for some non-negative integers $m$ and $n$.
Let $\ell$ be a general ruling of  $\widetilde{Q}\cong\mathbb{P}^1\times\mathbb{P}^1$ such that $\widetilde{F}_1\cdot\ell=\widetilde{F}_2\cdot\ell=1$.
Then $\widetilde{E}_1\cdot\ell=1$ and $0\leqslant S\cdot\ell=d-m-2n$.
Thus, we have $d\geqslant m-2n$. So, we can let $a=n$, $b=d-2n$, $c=d-m-n$.
\end{proof}

We are ready to prove that $X$ is K-polystable using the approach described in Section~\ref{section:Abban-Zhuang}.
Namely, let $\mathbf{F}$ be a $G$-invariant prime divisor over $X$, and let $Z$, $\widetilde{Z}$, $\overline{Z}$ be its centres
on $X$, $\widetilde{X}$ and $\overline{X}$, respectively.
Then it follows from Lemma~\ref{lemma:2-22-points} that
\begin{enumerate}
\item either $Z$ is a $G$-invariant irreducible surface,
\item or $Z$ is a curve described in Lemmas~\ref{lemma:2-22-curves} and \ref{lemma:2-22-curves-E},
\item or $Z$ is a point, and $\pi(Z)$ is one of the points $O$ or $O^\prime$.
\end{enumerate}

We start with the case of a $G$-invariant irreducible surface.
Using Remark~\ref{remark:Kento-divisorial} and Lemma~\ref{lemma:2-22-Eff}, we obtain

\begin{lemma}
\label{lemma:2-22-divisorial}
Let $\mathbf{F}$ be a $G$-invariant prime divisor on $X$. Then $\beta(\mathbf{F})>0$.
\end{lemma}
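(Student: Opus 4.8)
The plan is to invoke Remark~\ref{remark:Kento-divisorial} on the small resolution $Y=\widetilde{X}$, taking as reference surfaces $S_1=\widetilde{Q}$, $S_2=\widetilde{H}_C$ and $S_3=\widetilde{E}$. First I would note that the small birational morphism $\widetilde{X}\to X$ is an isomorphism in codimension one, hence induces a $G$-equivariant bijection between prime divisors on $X$ and prime divisors on $\widetilde{X}$ that preserves $A$ and $S$ (as recorded in Section~\ref{section:Abban-Zhuang}, using $-K_{\widetilde{X}}\sim\eta^*(-K_X)$). Thus a $G$-invariant prime divisor $\mathbf{F}$ on $X$ transforms into a $G$-invariant \emph{irreducible} surface on $\widetilde{X}$, which I keep calling $\mathbf{F}$; being irreducible it is in particular $G$-irreducible and cannot equal the reducible divisor $\widetilde{F}_1+\widetilde{F}_2$ (note also that neither $\widetilde{F}_1$ nor $\widetilde{F}_2$ is $G$-invariant, since $\tau$ swaps them). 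Lemma~\ref{lemma:2-22-Eff} therefore applies and gives $\mathbf{F}\sim a\widetilde{Q}+b\widetilde{H}_C+c\widetilde{E}$ with $a,b,c\in\mathbb{Z}_{\geqslant 0}$. By Remark~\ref{remark:Kento-divisorial} it then suffices to establish the three bounds
$$
S_{\widetilde{X}}(\widetilde{Q})<1,\qquad S_{\widetilde{X}}(\widetilde{H}_C)<1,\qquad S_{\widetilde{X}}(\widetilde{E})<1 .
$$

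Each bound is a volume integral $S_{\widetilde{X}}(D)=\tfrac{1}{30}\int_0^{\tau(D)}\mathrm{vol}(-K_{\widetilde{X}}-uD)\,du$, which I would evaluate by pinning down the Zariski decomposition $-K_{\widetilde{X}}-uD\sim_{\mathbb{R}}P(u)+N(u)$ as $u$ runs over $[0,\tau(D)]$ and integrating $\mathrm{vol}=P(u)^3$. The convenient starting presentations are $-K_{\widetilde{X}}\sim 2\widetilde{Q}+\widetilde{E}+\widetilde{F}_1+\widetilde{F}_2$, $-K_{\widetilde{X}}\sim\widetilde{Q}+2\widetilde{H}_C+2\widetilde{E}$ and $\widetilde{H}_C\sim\widetilde{H}-\widetilde{E}$, combined with the explicit triple-intersection table on $\widetilde{X}$ (and its analogue on $\overline{X}$). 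Working in the basis $\widetilde{H},\widetilde{E},\widetilde{F}_1,\widetilde{F}_2$, for each candidate $D$ I would first compute the pseudo-effective (equivalently big) threshold $\tau(D)$, then locate the finitely many values of $u$ at which a new component enters $N(u)$, verify that $P(u)$ is nef on each resulting subinterval by pairing it against the generating curves, and finally compute $P(u)^3$ piecewise. Whenever the positive part fails to be nef on $\widetilde{X}$ over some subrange, the commutative diagram lets me pass through the two Atiyah flops to $\overline{X}$ and read the same $S$-value there; the excerpt guarantees that at least one of the two small resolutions carries the decomposition throughout $[0,\tau(D)]$.

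The step I expect to be the real obstacle is exactly this Zariski-chamber bookkeeping: correctly identifying which of $\widetilde{E},\widetilde{F}_1,\widetilde{F}_2,\widetilde{Q},\widetilde{H}_C$ (or their transforms on $\overline{X}$) enter the negative part as $u$ grows, and confirming that the resulting $P(u)$ is genuinely nef rather than merely pseudo-effective on each subinterval. The curves along which negativity can first appear are precisely those singled out in Lemmas~\ref{lemma:2-22-curves} and~\ref{lemma:2-22-curves-E}, namely the rulings of $\widetilde{Q}\cong\mathbb{P}^1\times\mathbb{P}^1$ and the curves $\widetilde{E}\cap\widetilde{Q}$ and $\widetilde{E}\cap\widetilde{H}_C$, so the nefness tests reduce to intersection numbers already tabulated. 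Once the breakpoints and positive parts are in hand the integrals are elementary, and I expect all three values $S_{\widetilde{X}}(\widetilde{Q})$, $S_{\widetilde{X}}(\widetilde{H}_C)$, $S_{\widetilde{X}}(\widetilde{E})$ to come out strictly below $1$ with room to spare; should any value be borderline, the weaker $\lambda>\tfrac{4}{3}$ reformulation of Remark~\ref{remark:Kento-divisorial} together with the estimate \eqref{equation:KentoKyoto} provides a safety margin. This yields $S_{\widetilde{X}}(\mathbf{F})<1$, hence $\beta(\mathbf{F})=A_X(\mathbf{F})-S_X(\mathbf{F})>0$, for every $G$-invariant prime divisor $\mathbf{F}$ on $X$.
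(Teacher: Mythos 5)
Your proposal is correct and matches the paper's own proof essentially step for step: Remark~\ref{remark:Kento-divisorial} plus Lemma~\ref{lemma:2-22-Eff} (with the observation that a prime divisor cannot be $\widetilde{F}_1+\widetilde{F}_2$) reduces everything to checking $S(\widetilde{Q})<1$, $S(\widetilde{H}_C)<1$, $S(\widetilde{E})<1$, which the paper does by the same piecewise Zariski-decomposition computations you outline, working on $\widetilde{X}$ for $\widetilde{Q}$ and $\widetilde{H}_C$ and switching --- exactly as you anticipated --- to the other small resolution $\overline{X}$ for $\widetilde{E}$, obtaining $S(\widetilde{Q})=\tfrac{43}{60}$, $S(\widetilde{H}_C)=\tfrac{11}{12}$ and $S(\overline{E})=\tfrac{19}{30}$. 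The only content not in your write-up is the explicit chamber bookkeeping and integration, which proceeds exactly as you describe.
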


\begin{proof}
By Remark~\ref{remark:Kento-divisorial} and Lemma~\ref{lemma:2-22-Eff}, it is enough to show that
$\beta(\widetilde{Q})$, $\beta(\widetilde{H}_C)$, $\beta(\widetilde{E})$ are positive.
We will do this using the notations introduced in Section~\ref{section:Abban-Zhuang}.

We start with $\widetilde{Q}$. Let $Y=\widetilde{X}$ and $\mathscr{S}=\widetilde{Q}$. Then
$-K_{\widetilde{X}}-u\mathscr{S}\sim_{\mathbb{R}}(2-u)\mathscr{S}+\widetilde{E}+\widetilde{F}_1+\widetilde{F}_2$.
This shows that $\tau=2$. Moreover, we have
$$
P(u)\sim_{\mathbb{R}}\begin{cases}
(2-u)\mathscr{S}+\widetilde{E}+\widetilde{F}_1+\widetilde{F}_2& \quad \text{if } 0\leqslant u\leqslant 1,\\
(2-u)\big(\mathscr{S}+\widetilde{E}+\widetilde{F}_1+\widetilde{F}_2\big)& \quad \text{if } 1\leqslant u\leqslant 2.
\end{cases}
$$
If $u\in[0,1]$, then $N(u)=0$. If $u\in[1,2]$, then $N(u)=(u-1)(\widetilde{E}+\widetilde{F}_1+\widetilde{F}_2)$.
Then
$$
\big(P(u)\big)^3=\begin{cases}
2u^3-6u^2-18u+30& \quad \text{if } 0\leqslant u\leqslant 1,\\
8(2-u)^3& \quad \text{if } 1\leqslant u\leqslant 2.
\end{cases}
$$
Now, integrating $(P(u))^3$, we get $S_Y(\mathscr{S})=\frac{43}{60}$, so that $\beta(\widetilde{Q})=\frac{17}{60}>0$.

Now we deal with $\widetilde{H}_C$. Set $Y=\widetilde{X}$ and $\mathscr{S}=\widetilde{H}_C$.
Then $-K_{\widetilde{X}}-u\mathscr{S}\sim_{\mathbb{R}}(2-u)\mathscr{S}+2\widetilde{E}+\widetilde{Q}$.
This gives $\tau=2$, because $2\widetilde{E}+\widetilde{Q}$ is not big. Moreover, we have
$$
P(u)\sim_{\mathbb{R}}
\begin{cases}
(2-u)\mathscr{S}+2\widetilde{E}+\widetilde{Q}& \quad \text{if } 0\leqslant u\leqslant 1,\\
(2-u)\mathscr{S}+(3-u)\widetilde{E}+\widetilde{Q}& \quad \text{if } 1\leqslant u\leqslant 2.
\end{cases}
$$
If $u\in[0,1]$, then $N(u)=0$. If $u\in[1,2]$, then $N(u)=(u-1)\widetilde{E}$.
We compute
$$
\big(P(u)\big)^3=\begin{cases}
u^3-6u^2-12u+30& \quad \text{if } 0\leqslant u\leqslant 1,\\
(2-u)(u^2-10u+22)& \quad \text{if } 1\leqslant u\leqslant 2,
\end{cases}
$$
which gives $S_Y(\mathscr{S})=\frac{11}{12}$, so that $\beta(\widetilde{Q})=\frac{1}{12}>0$.

Finally, we set $Y=\overline{X}$ and $\mathscr{S}=\overline{E}$. Then
$-K_{\overline{X}}-u\mathscr{S}\sim_{\mathbb{R}} (2-u)\mathscr{S}+2\overline{H}_C+\overline{Q}$.
This shows that $\tau=2$, because $2\overline{H}_C+\overline{Q}$ is not big.
Moreover, we have
$$
P(u)\sim_{\mathbb{R}}\begin{cases}
(2-u)\mathscr{S}+2\overline{H}_C+\overline{Q} & \quad \text{ if } 0\leqslant u \leqslant 1,\\
(2-u)\big(\mathscr{S}+\overline{Q}+2\overline{H}_C\big) & \quad \text{ if } 1\leqslant u \leqslant 2.
\end{cases}
$$
If $u\in[0,1]$, then $N(u)=0$. If $u\in[1,2]$, then $N(u)=(u-1)\overline{Q}+2(u-1)\overline{H}_C$. Then
$$
\big(P(u)\big)^3=\begin{cases}
6u^{3}-6u^{2}-24u+30& \quad \text{if } 0\leqslant u\leqslant 1,\\
6(2-u)^3& \quad \text{if } 1\leqslant u\leqslant 2,
\end{cases}
$$
which gives $S_Y(\mathscr{S})=\frac{19}{30}$ and $\beta(\widetilde{E})=\beta(\overline{E})=\frac{11}{30}>0$.
\end{proof}

We now show that  $\beta(\mathbf{F})>0$ for $\mathbf{F}$ a $G$-invariant prime divisor with small center on $X$.

\begin{lemma}
\label{lemma:2-22-curve-in-HC}
Suppose that $\widetilde{Z}$ is a $G$-invariant irreducible curve in $\widetilde{H}_C$. Then $\beta(\mathbf{F})>0$.
\end{lemma}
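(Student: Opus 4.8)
The plan is to apply the Abban--Zhuang estimate \eqref{equation:Kento-curve} with the admissible flag $\mathscr{C}=\widetilde{Z}\subset\mathscr{S}=\widetilde{H}_C$ on $Y=\widetilde{X}$. Lemma~\ref{lemma:2-22-divisorial} already supplies everything I need about $\mathscr{S}$: it gives $S_{\widetilde{X}}(\widetilde{H}_C)=\frac{11}{12}<1$, together with the Zariski decomposition of $-K_{\widetilde{X}}-u\widetilde{H}_C$, namely $\tau=2$ with $N(u)=0$ for $u\in[0,1]$ and $N(u)=(u-1)\widetilde{E}$ for $u\in[1,2]$. So it will remain only to bound $S\big(W^{\widetilde{H}_C}_{\bullet,\bullet};\mathscr{C}\big)$ for each $G$-invariant irreducible curve $\mathscr{C}$ in $\widetilde{H}_C$, after which \eqref{equation:Kento-curve} yields $\beta(\mathbf{F})>0$.

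First I would identify the surface and enumerate the curves. Since $H_C$ is a plane containing the blown-up conic $C$ and meeting each of $L_1,L_2$ in a single point, its proper transform satisfies $\widetilde{H}_C\cong\mathrm{Bl}_2\mathbb{P}^2$, with $\mathrm{Pic}(\widetilde{H}_C)=\langle h,e_1,e_2\rangle$ and only three negative curves $e_1,e_2,h-e_1-e_2$. By Lemmas~\ref{lemma:2-22-curves} and \ref{lemma:2-22-curves-E}, the $G$-invariant irreducible curves in $\widetilde{H}_C$ are exactly: the proper transform $\widetilde{l}\sim h-e_1-e_2$ of the line $l$; the curve $\widetilde{C}=\widetilde{E}\cap\widetilde{H}_C\sim 2h-e_1-e_2$ lying over $C=C_1$; and the proper transforms $\widetilde{C}_r\sim 2h-e_1-e_2$ of the conics $C_r$ with $r\ne 1$. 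The two exceptional fibres $\widetilde{F}_i\cap\widetilde{H}_C=e_i$ are swapped by $\tau$, so they are not $G$-invariant and are excluded.

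Next I would restrict and integrate. A direct computation gives $\widetilde{H}|_{\widetilde{H}_C}=h$, $\widetilde{E}|_{\widetilde{H}_C}=\widetilde{C}$, $\widetilde{Q}|_{\widetilde{H}_C}=0$, and $\widetilde{F}_1|_{\widetilde{H}_C},\widetilde{F}_2|_{\widetilde{H}_C}$ the two $(-1)$-curves $e_i$, whence
$$
P(u)\big|_{\widetilde{H}_C}=\begin{cases}(2+u)h-u(e_1+e_2)&\text{if }0\le u\le 1,\\(4-u)h-(e_1+e_2)&\text{if }1\le u\le 2,\end{cases}
$$
which is nef, while $N(u)|_{\widetilde{H}_C}=(u-1)\widetilde{C}$ on $[1,2]$. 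For each $\mathscr{C}$ I would run the one-variable Zariski decomposition of $P(u)|_{\widetilde{H}_C}-v\mathscr{C}$ on $\mathrm{Bl}_2\mathbb{P}^2$, record $t(u)$ and $\big(P(u,v)\big)^2$, and evaluate the two integrals defining $S\big(W^{\widetilde{H}_C}_{\bullet,\bullet};\mathscr{C}\big)$. For $\mathscr{C}=\widetilde{C}$ one has $d(u)=u-1$, and I expect $S\big(W^{\widetilde{H}_C}_{\bullet,\bullet};\widetilde{C}\big)=\frac{7}{40}+\frac{11}{24}=\frac{19}{30}<1$. For $\mathscr{C}=\widetilde{C}_r$ with $r\ne 1$ the divisorial term vanishes, since $\widetilde{C}_r\not\subset\mathrm{Supp}\,N(u)|_{\widetilde{H}_C}$, while the second integral is unchanged because $\widetilde{C}_r\equiv\widetilde{C}$; hence $S=\frac{11}{24}<1$. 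In both cases \eqref{equation:Kento-curve} gives $\frac{A_X(\mathbf{F})}{S_X(\mathbf{F})}\ge\min\{\tfrac{12}{11},\tfrac{30}{19}\}>1$.

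The delicate case, and the main obstacle, is the line $\mathscr{C}=\widetilde{l}$. Since $\widetilde{l}^2=-1$, subtracting $v\widetilde{l}$ keeps the class pseudo-effective up to the large threshold $t(u)=2+u$ (resp.\ $4-u$ on $[1,2]$), and I expect the double integral to evaluate to exactly $S\big(W^{\widetilde{H}_C}_{\bullet,\bullet};\widetilde{l}\big)=1$. Thus the strict estimate just fails, and instead I would invoke the boundary refinement of Remark~\ref{remark:Kento-curve-strict}: because $S_{\widetilde{X}}(\widetilde{H}_C)=\frac{11}{12}<1$ and $S\big(W^{\widetilde{H}_C}_{\bullet,\bullet};\widetilde{l}\big)\le 1$, we still conclude $\beta(\mathbf{F})>0$. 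The only genuine bookkeeping will be keeping track of where the negative part of $P(u)|_{\widetilde{H}_C}-v\mathscr{C}$ switches on (at $v=u$, resp.\ $v=1$), so that the positive parts $P(u,v)$ are computed on the correct ranges; once this is organised, combining the three cases finishes the proof.
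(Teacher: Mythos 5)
Your proposal follows the paper's proof essentially verbatim: the same flag $\mathscr{C}=\widetilde{Z}\subset\mathscr{S}=\widetilde{H}_C$ on $Y=\widetilde{X}$, the same case split into $\widetilde{l}$, $\widetilde{C}$ and $\widetilde{C}_r$ via Lemmas~\ref{lemma:2-22-curves} and \ref{lemma:2-22-curves-E}, the same restricted Zariski decomposition data imported from Lemma~\ref{lemma:2-22-divisorial} (with $S_{\widetilde{X}}(\widetilde{H}_C)=\frac{11}{12}$), and the same appeal to Remark~\ref{remark:Kento-curve-strict} to handle the boundary case $S\big(W^{\mathscr{S}}_{\bullet,\bullet};\widetilde{l}\big)=1$. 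The only deviation is numerical: you obtain $\frac{19}{30}$ (resp.\ $\frac{11}{24}$) for the conic cases where the paper states $\frac{53}{80}$ (resp.\ $\frac{39}{80}$), and in fact recomputing the paper's own displayed integrands yields your values, so the discrepancy appears to be an arithmetic slip in the paper; since all four numbers are $<1$, the conclusion is unaffected either way.
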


\begin{proof}
The morphism $\gamma\circ\phi$ induces a birational morphism $\widetilde{H}_C\to H_C$,
which is a blowup of the points $H_C\cap L_1$ and $H_C\cap L_2$.
Set $\widetilde{f}_1=\widetilde{F}_1\vert_{\widetilde{H}_C}$ and $\widetilde{f}_2=\widetilde{F}_2\vert_{\widetilde{H}_C}$.
Then~$\widetilde{f}_1$ and $\widetilde{f}_2$ are exceptional curves of the morphism  $\widetilde{H}_C\to H_C$.
Let $\widetilde{l}$ be the third $(-1)$-curve in $\widetilde{H}_C$, set $h=\widetilde{H}\vert_{\widetilde{H}_C}$, and set $\widetilde{C}=\widetilde{E}\vert_{\widetilde{H}_C}$.
Then $\gamma\circ\phi(\widetilde{l})=l$, so that  $\widetilde{l}\sim h-\widetilde{f}_1-\widetilde{f}_2$.
By Lemmas~\ref{lemma:2-22-curves} and \ref{lemma:2-22-curves-E}, we have the following possible cases:
\begin{itemize}
\item $\pi(Z)=l$ and $\widetilde{Z}=\widetilde{l}$,

\item $\pi(Z)=C_1=C$ and $\widetilde{Z}=\widetilde{C}\sim 2h-\widetilde{f}_1-\widetilde{f}_2$,

\item $\pi(Z)=C_r$ with $r\ne 1$, $\widetilde{Z}\not\subset \widetilde{E}$ and $\widetilde{Z}\sim 2h-\widetilde{f}_1-\widetilde{f}_2$.
\end{itemize}
Set $Y=\widetilde{X}$, $\mathscr{S}=\widetilde{H}_C$, $\mathscr{C}=\widetilde{Z}$.
Then it follows from the proof of Lemma~\ref{lemma:2-22-divisorial} that
$$
P(u)\big\vert_{\mathscr{S}}\sim_{\mathbb{R}}\begin{cases}
(2+u)h-u(\widetilde{f}_1+\widetilde{f}_2) & \quad \text{if } 0\leqslant u\leqslant 1,\\
 (4-u)h-\widetilde{f}_1-\widetilde{f}_2 & \quad \text{if } 1\leqslant u\leqslant 2,
\end{cases}
$$
and
$$
N(u)\big\vert_{\mathscr{S}}=\begin{cases}
	0& \quad \text{if } 0\leqslant u\leqslant 1,\\
	(u-1)\widetilde{C}& \quad \text{if } 1\leqslant u\leqslant 2.
\end{cases}
$$
We know from the proof of Lemma~\ref{lemma:2-22-divisorial} that $S_Y(\mathscr{S})=\frac{11}{12}<1$.
Let us compute $S(W^\mathscr{S}_{\bullet,\bullet};\mathscr{C})$.

Suppose that $\widetilde{Z}=\widetilde{l}$.
If $0\leqslant u \leqslant 1$, then $t(u)=2+u$.
If $1\leqslant u \leqslant 2$, thent $t(u)=4-u$.
Moreover, if $0\leqslant u \leqslant 1$, then
$$
P(u,v)\sim_{\mathbb{R}}\begin{cases}
(2+u-v)h-(u-v)(\widetilde{f}_1+\widetilde{f}_2)& \quad \text{if } 0\leqslant v \leqslant u,\\
(2+u-v) h & \quad \text{if } u\leqslant v \leqslant 2+u,
\end{cases}
$$
and
$$
N(u,v)=\begin{cases}
0& \quad \text{if } 0\leqslant v \leqslant u,\\
(v-u)(\widetilde{f}_1+\widetilde{f}_2)& \quad \text{if } u\leqslant v \leqslant 2+u.
\end{cases}
$$
Similarly, if $1\leqslant u \leqslant 2$, then
$$
P(u,v)\sim_{\mathbb{R}}\begin{cases}
(4-u-v)h-(1-v)(\widetilde{f}_1+\widetilde{f}_2)& \quad \text{if } 0\leqslant v \leqslant 1,\\
(4-u-v) h & \quad \text{if } 1\leqslant v \leqslant 4-u,
\end{cases}
$$
and
$$
N(u,v) = \begin{cases}
0& \quad \text{if } 0\leqslant v \leqslant 1,\\
(v-1)(\widetilde{f}_1+\widetilde{f}_2)& \quad \text{if } 1\leqslant v \leqslant 4-u.
\end{cases}
$$
This gives
\begin{multline*}
S(W^\mathscr{S}_{\bullet,\bullet};\mathscr{C})=\frac{1}{10}\int\limits_{0}^1\int\limits_{0}^{u}4-u^{2}+2uv-v^{2}+4u-4vdvdu+\frac{1}{10}\int\limits_{0}^1\int\limits_{u}^{2+u}(2+u-v)^2dvdu+\\
+\frac{1}{10}\int\limits_{1}^2\int\limits_{0}^{1}u^{2}+2uv-v^{2}-8u-4v+14dvdu+\frac{1}{10}\int\limits_{1}^2\int\limits_{1}^{4-u}(u +v-4)^{2}dvdu=1.
\end{multline*}
Hence, it follows from Remark~\ref{remark:Kento-curve-strict} that $\beta(\mathbf{F})>0$ in the case when $\widetilde{Z}=\widetilde{l}$.

We may assume that $\pi(Z)=C_r$. Then $\widetilde{Z}\sim 2h-\widetilde{f}_1-\widetilde{f}_2$.
If $0\leqslant u \leqslant 1$, then $t(u)=\frac{2+u}{2}$.
Similarly, if $1\leqslant u \leqslant 2$, then $t(u)=\frac{4-u}{2}$.
Moreover, if $0\leqslant u \leqslant 1$, then
$$
P(u,v)\sim_{\mathbb{R}}\begin{cases}
(2+u-2v)h-(u-v)(\widetilde{f}_1+\widetilde{f}_2) & \quad \text{if } 0\leqslant v \leqslant u,\\
(2+u -2v) h& \quad \text{if } u\leqslant v \leqslant \frac{2+u}{2},
\end{cases}
$$
and
$$
N(u,v)=\begin{cases}
0& \quad \text{if } 0\leqslant v \leqslant u,\\
(v-u)(\widetilde{f}_1+\widetilde{f}_2)& \quad \text{if } u\leqslant v \leqslant \frac{2+u}{2}.
\end{cases}
$$
Similarly, if $1\leqslant u \leqslant 2$, then
$$
P(u,v)\sim_{\mathbb{R}}\begin{cases}
(4-u-2v)h-(1-v)(\widetilde{f}_1+\widetilde{f}_2)& \quad \text{if } 0\leqslant v \leqslant 1,\\
(4-u-2v)h& \quad \text{if } 1\leqslant v \leqslant \frac{4+u}{2},
\end{cases}
$$
and
$$
N(u,v)=\begin{cases}
0& \quad \text{if } 0\leqslant v \leqslant 1,\\
(v-1)(\widetilde{f}_1+\widetilde{f}_2)& \quad \text{if } 1\leqslant v \leqslant \frac{2+u}{2}.
\end{cases}
$$
Therefore, if $\widetilde{Z}=\widetilde{C}$, then $S(W^\mathscr{S}_{\bullet,\bullet};\mathscr{C})$ can be computed as follows:
\begin{multline*}\hspace*{-1cm}
\frac{1}{10}\int\limits_{1}^{2}(u-1)(u^{2}-8u+14)du+\frac{1}{10}\int\limits_{0}^1\int\limits_{0}^{u}4-u^{2}+2v^{2}+4u-8vdvdu+\frac{1}{10}\int\limits_{0}^1\int\limits_{u}^{\frac{2+u}{2}}(2+u-2v)^2dvdu+\\
+\frac{1}{10}\int\limits_{1}^2\int\limits_{0}^{1}u^{2}+4uv+2v^{2}-8u-12v+14dvdu+\frac{1}{10}\int\limits_{1}^2\int\limits_{1}^{\frac{4-u}{2}}(u+2v-4)^2dvdu=\frac{53}{80}<1.
\end{multline*}
Similarly, if $\widetilde{Z}\ne\widetilde{C}$, then $S(W^\mathscr{S}_{\bullet,\bullet};\mathscr{C})=\frac{39}{80}<1$.
Then $\beta(\mathbf{F})>0$ by \eqref{equation:Kento-curve}.
\end{proof}

Using computations made in the proof of Lemma~\ref{lemma:2-22-curve-in-HC}, we obtain the following result:

\begin{lemma}
\label{lemma:2-22-point-HC}
Suppose that $\pi(Z)$ contains $O$. Then $\beta(\mathbf{F})>0$.
\end{lemma}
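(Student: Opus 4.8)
The plan is to treat the last remaining centres, namely those whose image under $\pi$ passes through the $G$-fixed point $O$. Since $O\notin C_\infty$, both small resolutions are isomorphisms over a neighbourhood of $O$, so $O$ lifts to a single point $P$ on $Y$ and $\eta\colon Y\to X$ is a local isomorphism there. By Lemmas~\ref{lemma:2-22-curves} and~\ref{lemma:2-22-curves-E}, the only $G$-invariant irreducible curve through $O$ is $l^\prime$, while $O$ lies on the plane $H_{C^\prime}=\{x_0=x_3\}$ but on none of the blown-up loci. Hence the centres $Z$ with $O\in\pi(Z)$ are either the proper transform of $l^\prime$ or the point $P$ over $O$, and by the reduction of Section~\ref{section:Abban-Zhuang} it suffices to bound $\beta(\mathbf{F})$ from below in each of these two cases. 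The curve case $Z=\widetilde{l^\prime}$ I would dispose of first, via the two-step estimate \eqref{equation:Kento-curve}: choose a plane $\mathscr{S}$ through $l^\prime$ (which need not be $G$-invariant, since only $\mathbf{F}$ must be), set $\mathscr{C}=\widetilde{l^\prime}$, and check $S_Y(\mathscr{S})<1$ and $S(W^\mathscr{S}_{\bullet,\bullet};\mathscr{C})\leqslant 1$, invoking Remark~\ref{remark:Kento-curve-strict} if the second invariant is exactly $1$.

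For the point $P$ I would build an admissible flag $P\in\mathscr{C}\subset\mathscr{S}$ with $\mathscr{S}=\widetilde{H}_{C^\prime}$ the proper transform of the plane $\{x_0=x_3\}$ through $O$ (selecting $Y=\widetilde{X}$ or $Y=\overline{X}$ so that the Zariski decomposition of $-K_Y-u\mathscr{S}$ exists on all of $[0,\tau]$), and with $\mathscr{C}$ a convenient smooth curve through $P$ inside $\mathscr{S}$. Note that no $G$-invariant curve in $H_{C^\prime}$ passes through $O$, so the flag is genuinely non-$G$-invariant; this is permitted, as the Abban--Zhuang estimate \eqref{equation:Kento-point} bounds $\delta_P(Y)$ and thereby controls every $\mathbf{F}$ centred at $P$. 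The Zariski decompositions $-K_Y-u\mathscr{S}\sim_{\mathbb{R}}P(u)+N(u)$ and then $P(u)|_\mathscr{S}-v\mathscr{C}\sim_{\mathbb{R}}P(u,v)+N(u,v)$ are obtained by the same method as in the proof of Lemma~\ref{lemma:2-22-curve-in-HC}, now with $\widetilde{H}_{C^\prime}$ in place of $\widetilde{H}_C$; this reduces the problem to the three numerical invariants $S_Y(\mathscr{S})$, $S(W^\mathscr{S}_{\bullet,\bullet};\mathscr{C})$ and $S(W^{\mathscr{S},\mathscr{C}}_{\bullet,\bullet,\bullet};P)$ appearing in \eqref{equation:Kento-point}.

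I would then record $S_Y(\mathscr{S})<1$ and $S(W^\mathscr{S}_{\bullet,\bullet};\mathscr{C})<1$ for the chosen $\mathscr{C}$, reusing the piecewise formulas for $P(u,v)$ and the integral computations already established for flags inside $\widetilde{H}_C$. The genuinely new ingredient is the point invariant $S(W^{\mathscr{S},\mathscr{C}}_{\bullet,\bullet,\bullet};P)$: here one must restrict the negative parts to $\mathscr{C}$, read off $\mathrm{ord}_P\big(N^\prime(u)|_\mathscr{C}+N(u,v)|_\mathscr{C}\big)$ together with the intersection numbers $\big(P(u,v)\cdot\mathscr{C}\big)$, and integrate. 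Once all three invariants are shown to be $<1$ (or $=1$, invoking the sharp form in Remark~\ref{remark:Kento-point-strict}, exactly as the value $S=1$ was handled for $\widetilde{l}$ in Lemma~\ref{lemma:2-22-curve-in-HC}), the estimate \eqref{equation:Kento-point} yields $\delta_P(Y)>1$, hence $\beta(\mathbf{F})>0$ for every $G$-invariant prime divisor $\mathbf{F}$ centred at $P$. The main obstacle I anticipate is the bookkeeping in this point-level step: choosing $\mathscr{C}$ so that the flag is smooth at $P$ and the Zariski chambers stay clean as $u$ and $v$ vary, and then verifying that the resulting point invariant does not exceed $1$ --- this is precisely where boundary values are most likely to occur and where the sharp Remark~\ref{remark:Kento-point-strict} will be needed.
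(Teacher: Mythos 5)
Your reduction of the possible centres (the point $P$ over $O$, or the curve $l^\prime$) is correct, and so is your structural understanding of the machinery: admissible flags need not be $G$-invariant, and the paper's own flag here is not. But your proof diverges from the paper's at the decisive point, and the divergence opens a gap. The paper's proof of this lemma takes $\mathscr{S}=\widetilde{H}_C$ and for $\mathscr{C}$ the member of the pencil $|h-\widetilde{f}_1|$ through $P$, and obtains \emph{strict} inequalities $S_Y(\mathscr{S})=\frac{11}{12}$ and $S(W^{\mathscr{S}}_{\bullet,\bullet};\mathscr{C})=S(W^{\mathscr{S},\mathscr{C}}_{\bullet,\bullet,\bullet};P)=\frac{47}{60}<1$, hence $\delta_P(Y)>1$ by \eqref{equation:Kento-point}; since $\delta_P$ controls \emph{every} $G$-invariant divisor whose centre merely contains $P$, the curve case $\pi(Z)=l^\prime$ is absorbed for free --- this is exactly how the lemma is invoked at the end of Section~\ref{section:2-22}. (Beware that the paper's labels are tacitly interchanged: with the printed coordinates one has $O\in H_{C^\prime}$ and $O^\prime\in H_C$, so the computation in this lemma is carried out at the fixed point lying on $H_C$, while Lemma~\ref{lemma:2-22-O-prime} works at the one on $H_{C^\prime}$.) Your coordinate-faithful choice $\mathscr{S}=\widetilde{H}_{C^\prime}$ therefore lands you in the computation the paper performs in Lemma~\ref{lemma:2-22-O-prime}, where with $\mathscr{C}=\widetilde{l}_1$ the invariants come out \emph{exactly} $1$; there only Remark~\ref{remark:Kento-point-strict} applies, and that remark requires $Z_Y=P$. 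So, as you half-anticipate, your point-case argument proves $\beta(\mathbf{F})>0$ only for $\mathbf{F}$ whose centre is the point itself, and does not yield $\delta_P(Y)>1$.

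Consequently the whole lemma, whose statement must cover $\pi(Z)=l^\prime$, rests on your first step --- the curve computation for the proper transform of $l^\prime$ --- and that step is only a declaration of intent. You name no surface: the planes containing $l^\prime$ form the pencil $\{c_1x_1+c_2x_2=0\}$, none of which is among the surfaces analysed in the paper; each meets $C$, $L_1$, $L_2$ in points rather than containing them, so the proper transform, its Zariski chambers, and the invariant $S(W^{\mathscr{S}}_{\bullet,\bullet};\mathscr{C})$ would all have to be computed from scratch, and you give no evidence that this invariant is $\leqslant 1$ (several of the paper's analogous values sit exactly at $1$, so this is a genuine risk, not bookkeeping). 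The paper never performs this computation: it avoids it by noting that $l^\prime$ passes through \emph{both} $G$-fixed points and running the flag at the one on $H_C$, where all invariants are strictly below $1$. To repair your proof, either carry out and verify the $l^\prime$ computation, or adopt the paper's device: do the point computation in $\widetilde{H}_C$ (values $\frac{47}{60}$), conclude $\delta_P(Y)>1$ there, and let that single estimate dispose of the point and the curve simultaneously.
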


\begin{proof}
Let us use notations introduced in the proof of Lemma~\ref{lemma:2-22-curve-in-HC}.
First, we let $Y=\widetilde{X}$.
Let $P$ be the preimage on $Y$ of the point $O$.
Then $P$ is the unique $G$-fixed point in $\widetilde{H}_C$.

As in the proof of Lemma~\ref{lemma:2-22-curve-in-HC}, we choose $\mathscr{S}=\widetilde{H}_C$,
and we choose $\mathscr{C}$ to be the curve in the pencil $|h-\widetilde{f}_1|$ that contains $P$.
Since $S_Y(\mathscr{S})=\frac{11}{12}$ (see the proof of Lemma~\ref{lemma:2-22-divisorial}),
it follows from \eqref{equation:Kento-point} that $\beta(\mathbf{F})>0$
provided that $S(W^\mathscr{S}_{\bullet,\bullet};\mathscr{C})<1$ and $S(W_{\bullet, \bullet,\bullet}^{\mathscr{S},\mathscr{C}};P)<1$.

Let us compute $S(W^\mathscr{S}_{\bullet,\bullet};\mathscr{C})$ and $S(W_{\bullet, \bullet,\bullet}^{\mathscr{S},\mathscr{C}};P)$.
If $0\leqslant u \leqslant 1$, then $t(u)=2$. If $1\leqslant u \leqslant 2$, then $t(u)=3-u$.
Moreover, if $0\leqslant u\leqslant 1$, then
$$
P(u,v)\sim_{\mathbb{R}}\begin{cases}
(2+u-v)h-(u-v)\widetilde{f}_1-u\widetilde{f}_2&\quad \text{ if } 0\leqslant v \leqslant u,\\
(2+u-v)h-u\widetilde{f}_2&\quad \text{ if } u\leqslant v \leqslant 2,
\end{cases}
$$
and
$$
N(u,v)=\begin{cases}
0& \quad \text{if } 0\leqslant v \leqslant u,\\
(v-u)\widetilde{f}_1& \quad \text{if } u\leqslant v \leqslant 2,
\end{cases}
$$
which gives
$$
\big(P(u,v)\big)^2=\begin{cases}
4-u^2+4u-4v&\quad \text{ if } 0\leqslant v \leqslant u,\\
(2-v)(2+2u-v)&\quad \text{ if } u\leqslant v \leqslant 2,
\end{cases}
$$
and
$$
P(u,v)\cdot\mathscr{C}=\begin{cases}
2&\quad \text{ if } 0\leqslant v \leqslant u,\\
2+u-v&\quad \text{ if } u\leqslant v \leqslant 2.
\end{cases}
$$
Similarly, if $1\leqslant u \leqslant 2$, then
$$
P(u,v)\sim_{\mathbb{R}}\begin{cases}
(4-u-v)h-(1-v)\widetilde{f}_1-\widetilde{f}_2&\quad \text{if } 0\leqslant v \leqslant 1,\\
(4-u-v)h-\widetilde{f}_2   &\quad \text{if } 1\leqslant v \leqslant 3-u,
\end{cases}
$$
and
$$
N(u,v)=\begin{cases}
0& \quad \text{if } 0\leqslant v \leqslant 1,\\
(v-1)\widetilde{f}_1& \quad \text{if } 1\leqslant 3-u \leqslant 3-u,
\end{cases}
$$
which implies that
$$
\big(P(u,v)\big)^2=\begin{cases}
u^2+2uv-8u-6v+14&\quad \text{if } 0\leqslant v \leqslant 1,\\
(3-u-v)(5-u-v)   &\quad \text{if } 1\leqslant v \leqslant 3-u,
\end{cases}
$$
and
$$
P(u,v)\cdot\mathscr{C}=\begin{cases}
3-u&\quad \text{if } 0\leqslant v \leqslant 1,\\
4-u-v&\quad \text{if } 1\leqslant v \leqslant 3-u.
\end{cases}
$$

Observe that $\mathscr{C}$ is not contained in the support of the divisor $N(u)$ for every $u\in[0,2]$,
and $P$ is not contained in the support of the divisor $N(u,v)$ for $u\in[0,2]$ and $v\in[0,t(u)]$.
Now, by integrating we get $S(W^\mathscr{S}_{\bullet,\bullet};\mathscr{C})=S(W_{\bullet, \bullet,\bullet}^{\mathscr{S},\mathscr{C}};P)=\frac{47}{60}<1$,
so $\beta(\mathbf{F})>0$ by \eqref{equation:Kento-point}.
\end{proof}

Recall that $H_{C^\prime}$ is the plane in $\mathbb{P}^3$ that contains $O^\prime$ and $C^\prime_r$ for every $r\in\mathbb{C}^\ast$.

\begin{lemma}
\label{lemma:2-22-curve-HC-prime}
Suppose that $\pi(Z)=C^\prime_r$ for some $r\in\mathbb{C}^\ast$. Then $\beta(\mathbf{F})>0$.
\end{lemma}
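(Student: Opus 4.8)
The plan is to run the Abban--Zhuang estimate \eqref{equation:Kento-curve} of Section~\ref{section:Abban-Zhuang} on $Y=\widetilde X$ with the admissible flag $\mathscr C\subset\mathscr S$, where $\mathscr S=\widetilde H_{C'}$ and $\mathscr C=\widetilde Z$ is the proper transform of $C'_r$. Since $C'_r\subset H_{C'}$ we have $\mathscr C\subset\mathscr S$, and because $C'_r$ meets the blown-up locus $C_\infty=C+L_1+L_2$ only at the two nodes $p_1=C\cap L_1$ and $p_2=C\cap L_2$, the curve $\mathscr C$ is precisely the centre $Z_Y$. As $H_{C'}$ contains none of $C$, $L_1$, $L_2$, its proper transform has class $\widetilde H_{C'}\sim\widetilde H$. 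It then suffices to prove $S_Y(\mathscr S)<1$ and $S(W^{\mathscr S}_{\bullet,\bullet};\mathscr C)\leqslant 1$, which by \eqref{equation:Kento-curve} and Remark~\ref{remark:Kento-curve-strict} forces $\beta(\mathbf F)>0$.

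First I would compute the Zariski decomposition of $-K_{\widetilde X}-u\widetilde H$. Using $-K_{\widetilde X}\sim\widetilde Q+2\widetilde H$ one rewrites $-K_{\widetilde X}-u\widetilde H\sim_{\mathbb R}\widetilde Q+(2-u)\widetilde H$, so $\tau=2$. The class $\widetilde Q+\widetilde H\sim -K_{\widetilde X}-\widetilde H$ is nef: on the rulings of $\widetilde Q$ it has degree $0$, and on the flopping curves both $-K_{\widetilde X}$ and $\widetilde H$ are trivial; on the other hand $\widetilde Q$ is negative on those rulings. Hence $N(u)=0$ and $P(u)=\widetilde Q+(2-u)\widetilde H$ for $u\in[0,1]$, while $N(u)=(u-1)\widetilde Q$ and $P(u)=(2-u)(\widetilde Q+\widetilde H)$ for $u\in[1,2]$. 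Integrating $(P(u))^3$ then gives $S_Y(\mathscr S)=\tfrac{17}{30}<1$.

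The crux is identifying $\mathscr S$ and the class of $\mathscr C$ inside it. The plane $H_{C'}\cong\mathbb P^2$ passes through the two nodes $p_1,p_2$, meeting both $C$ and the $L_i$ there; these are exactly the points whose preimages are the ordinary double points of $X$, which is why the small resolution $\widetilde X$ matters. A local computation at each node shows that $\mathscr S$ is $\mathbb P^2$ blown up at $p_1,p_2$ (exceptional classes $e_1,e_2$) and then at the infinitely near points $q_i\in e_i$ determined by $C$ (exceptional classes $g_1,g_2$), with restrictions $\widetilde H|_{\mathscr S}=h$, $\widetilde E|_{\mathscr S}=g_1+g_2$ and $\widetilde F_i|_{\mathscr S}=e_i$. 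The delicate point is that $\widetilde F_i\cap\mathscr S$ and $\widetilde H_C\cap\mathscr S$ pick up exceptional components over the nodes, so the naive restriction is false and must be read off from the local model. Since every $C'_r$ passes through $p_1,p_2$ with the fixed tangent directions $q_1,q_2$, one obtains $\mathscr C\sim 2h-e_1-e_2-g_1-g_2=\widetilde Q|_{\mathscr S}$ for all $r$, a base-point-free pencil of self-intersection $0$; note $r=1$ is the unique member lying on $\widetilde Q$.

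Finally I would restrict and integrate. From the above, $P(u)|_{\mathscr S}=(4-u)h-e_1-e_2-g_1-g_2$ on $[0,1]$ and $(2-u)(3h-e_1-e_2-g_1-g_2)$ on $[1,2]$, whereas $N(u)|_{\mathscr S}=(u-1)\,\widetilde{C'_1}$ on $[1,2]$. Thus $d(u)=\mathrm{ord}_{\mathscr C}(N(u)|_{\mathscr S})$ equals $u-1$ when $\mathscr C=\widetilde{C'_1}$ (the case $r=1$) and vanishes otherwise. The Zariski decomposition of $P(u)|_{\mathscr S}-v\mathscr C$ on $\mathscr S$ has $t(u)=2-\tfrac u2$ on $[0,1]$ and $t(u)=\tfrac32(2-u)$ on $[1,2]$, with positive part of the form $(4-u-2v)h-(1-v)(e_1+e_2+g_1+g_2)$ up to the wall $v=1$ (resp. $v=2-u$), beyond which the $g_i$ and $e_i-g_i$ enter $N(u,v)$ and $P(u,v)$ becomes a multiple of $h$. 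Carrying out the two integrals should yield $S(W^{\mathscr S}_{\bullet,\bullet};\mathscr C)=\tfrac{27}{40}<1$ for $r\neq1$ and $\tfrac{43}{60}<1$ for $r=1$ (the extra $\tfrac1{24}$ coming from the $d(u)$-term), so \eqref{equation:Kento-curve} gives $\beta(\mathbf F)>0$. The main obstacle is precisely the local bookkeeping of $\mathscr S$ at the two nodes—pinning down the restrictions and the class of $\mathscr C$—together with the $r=1$ versus $r\neq1$ split that appears in the negative part of $P(u)|_{\mathscr S}$.
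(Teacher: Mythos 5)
Your proposal is correct and takes essentially the same route as the paper's proof: the same flag $\mathscr{C}\subset\mathscr{S}=\widetilde{H}_{C'}$ on $Y=\widetilde{X}$, the same identification of $\mathscr{S}$ as the degree-five weak del Pezzo surface obtained by blowing up $P_1,P_2$ and the two infinitely near points along the common tangent directions of the conics $C'_r$, the same Zariski decompositions with the wall at $v=1$ (resp.\ $v=2-u$), and the same values $S_Y(\mathscr{S})=\frac{17}{30}$, $S(W^{\mathscr{S}}_{\bullet,\bullet};\mathscr{C})=\frac{43}{60}$ for $r=1$ (with $d(u)=u-1$ contributing the extra $\frac{1}{24}$) and $\frac{27}{40}$ for $r\neq 1$. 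The only difference is bookkeeping: you work in the blowup basis $h,e_i,g_i$, whereas the paper phrases the same computation via the pencil $\mathcal{P}$ and the Mori-cone generators $\widetilde{f}_i,\widetilde{g}_i,\widetilde{l},\widetilde{l}_1,\widetilde{l}_2$.
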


\begin{proof}
As above, we use notations introduced in Section~\ref{section:Abban-Zhuang}.
Let $Y=\widetilde{X}$, and let $\mathscr{S}$ be the proper transform on $Y$ of the plane $H_{C^\prime}$.
Then $-K_{\widetilde{X}}-u\mathscr{S}\sim_{\mathbb{R}}\widetilde{Q}+(2-u)\mathscr{S}$.
This gives $\tau=2$.
If $u\in[0,1]$, then $N(u)=0$. If $u\in[1,2]$, then $N(u)=(u-1)\widetilde{Q}$.
Thus, we have
$$
P(u)\sim_{\mathbb{R}}\begin{cases}
\widetilde{Q} +(2-u)\mathscr{S}&\quad\text{ if }u\in[0,1],\\
(2-u)(\widetilde{Q}+\mathscr{S}&\quad\text{ if }u\in[1,2].
\end{cases}
$$
Integrating, we get $S_{Y}(\mathscr{S})=\frac{17}{30}$.

Set $\widetilde{C}^\prime=\widetilde{Q}\vert_{\mathscr{S}}$.
Then $\widetilde{C}^\prime$ is a smooth irreducible $G$-invariant curve,
and
$$
N(u)=
\begin{cases}
0&\quad\text{ if } u\in[0,1],\\
(u-1)\widetilde{C}^\prime&\quad\text{ if } u\in[1,2].
\end{cases}
$$
To describe $P(u)\vert_{\mathscr{S}}$ explicitly, we have to say few words about the surface $\mathscr{S}$.

Set $P_1=H_{C^\prime}\cap L_1$ and $P_2=H_{C^\prime}\cap L_2$.
Recall that $l$ is the line containing  $P_1$ and $P_2$.
Let $\mathcal{P}$ be the pencil on $H_{C^\prime}$  generated by $2l$ and $C^\prime$,
let $l_1$ and $l_2$ be the lines in  $H_{C^\prime}$ that are tangent to $C^\prime$ at the points $P_1$ and $P_2$, respectively.
Then
\begin{itemize}
\item the base locus of the pencil $\mathcal{P}$ consists of the points $P_1$ and $P_2$,
\item the pencil $\mathcal{P}$ contains $l_1+l_2$ and the conic $C_r^\prime$ for every $r\in\mathbb{C}^\ast$,
\item the conics $2l$ and $l_1+l_2$ are the only singular curves in $\mathcal{P}$.
\end{itemize}
The morphism $\gamma\circ\phi$ induces a birational morphism $\xi\colon \mathscr{S}\to H_{C^\prime}$
that resolves the base locus of the pencil $\mathcal{P}$.
The morphism $\xi$ is a composition of $4$ blowups such that we have the following $G$-equivariant commutative diagram:
$$
\xymatrix@R=6mm{
&\mathscr{S}\ar[dl]_{\xi}\ar[dr]&\\
 H_{C^\prime}\ar@{-->}[rr]&&\mathbb{P}^1}
$$
where $H_{C^\prime}\dasharrow\mathbb{P}^1$ is the rational map given by $\mathcal{P}$,
and $\mathscr{S}\to\mathbb{P}^1$ is a surjective morphism.
The birational morphism $\xi$ is a composition of the blowup of the points $P_1$ and $P_2$
with the subsequent blowup of two points in the exceptional curves contained in the proper transforms of    $l_1$ and $l_2$.
Note that $\mathscr{S}$ is a weak del Pezzo surface of degree five.

We have $\widetilde{E}\vert_{\mathscr{S}}=\widetilde{g}_1+\widetilde{g}_2$,
where $\widetilde{g}_1$ and $\widetilde{g}_2$ are two irreducible $\xi$-exceptional $(-1)$-curves such that $\xi(\widetilde{g}_1)=P_1$  and $\xi(\widetilde{g}_2)=P_2$.
Let $\widetilde{f}_1$ and $\widetilde{f}_2$ be the remaining $\xi$-exceptional curves
that are mapped to the points $P_1$ and $P_2$, respectively,
and let $\widetilde{l}$, $\widetilde{l}_1$, $\widetilde{l}_2$, $\widetilde{C}^\prime_r$
be the proper transforms on $\mathscr{S}$ of the curves $l$, $l_1$, $l_2$, $C_r^\prime$, respectively.
Then $\widetilde{C}^\prime=\widetilde{C}^\prime_1$ and
$2\widetilde{l}+\widetilde{f}_1+\widetilde{f}_2\sim\widetilde{l}_1+\widetilde{l}_2\sim  \widetilde{C}^\prime\sim \widetilde{C}^\prime_r$
for every $r\in\mathbb{C}^\ast$,
ant the curves
$\widetilde{f}_1$, $\widetilde{f}_2$, $\widetilde{g}_1$, $\widetilde{g}_2$, $\widetilde{l}$, $\widetilde{l}_1$, $\widetilde{l}_2$
generate the Mori cone $\overline{\mathrm{NE}(\mathscr{S})}$ by \cite[Proposition~8.5]{Coray1988}.

Note that $\widetilde{F}_1\vert_{\mathscr{S}}=\widetilde{f}_1+\widetilde{g}_1$ and $\widetilde{F}_2\vert_{\mathscr{S}}=\widetilde{f}_2+\widetilde{g}_2$.
Using this, we get
\begin{equation}
\label{equation:2-12-H-C-prime}
P(u)\big\vert_{\mathscr{S}}\sim_{\mathbb{R}}
\begin{cases}
\frac{4-u}{2}\widetilde{C}^\prime+\frac{2-u}{2}\big(\widetilde{f}_1+\widetilde{f}_2\big)+(2-u)\big(\widetilde{g}_1+\widetilde{g}_2\big)  & \quad \text{if } 0\leqslant u\leqslant 1,\\
\frac{6-3u}{2}\widetilde{C}^\prime+\frac{2-u}{2}\big(\widetilde{f}_1+\widetilde{f}_2\big)+(2-u)\big(\widetilde{g}_1+\widetilde{g}_2\big)  & \quad \text{if } 1\leqslant u\leqslant 2.
\end{cases}
\end{equation}

Set $\mathscr{C}=\widetilde{Z}$.
Let us compute $S(W^\mathscr{S}_{\bullet,\bullet};\mathscr{C})$.
Recall that $\mathscr{C}\sim \widetilde{C}^\prime$.
Then \eqref{equation:2-12-H-C-prime} gives
$$
t(u)=\begin{cases}
\frac{4-u}{2}& \quad \text{if } 0\leqslant u \leqslant 1,\\
\frac{6-3u}{2} & \quad \text{if } 1\leqslant u \leqslant 2.
\end{cases}
$$
Moreover, if $0\leqslant u\leqslant 1$, then \eqref{equation:2-12-H-C-prime} gives
$$
P(u,v)\sim_{\mathbb{R}}
\begin{cases}
\frac{4-u-2v}{2}\mathscr{C}+(2-u)\big(\widetilde{f}_1+\widetilde{f}_2\big)+\frac{2-u}{2}\big(\widetilde{g}_1+\widetilde{g}_2\big) &\quad\text{ if } 0\leqslant v \leqslant 1,\\
\frac{4-u-2v}{2}\big(\mathscr{C}+\widetilde{f}_1+\widetilde{f}_2+\widetilde{g}_1+\widetilde{g}_2\big) &\quad\text{ if } 1\leqslant v\leqslant \frac{4-u}{2},
\end{cases}
$$
and
$$
N(u,v)=\begin{cases}
0&\quad\text{ if } 0\leqslant v\leqslant 1,\\
(v-1)(\widetilde{f}_1+\widetilde{f}_2+2\widetilde{g}_1+2\widetilde{g}_2)&\quad\text{ if } 1\leqslant v\leqslant\frac{4-u}{2}.
\end{cases}
$$
Similarly, if $1\leqslant u\leqslant 2$, then \eqref{equation:2-12-H-C-prime}  gives
$$
P(u,v)\sim_{\mathbb{R}}\begin{cases}
\frac{6-3u-2v}{2}\mathscr{C}+\frac{2-u}{2}\big(\widetilde{f}_1+\widetilde{f}_2\big)+(2-u)\big(\widetilde{g}_1+\widetilde{g}_2\big)&\quad\text{ if } 0\leqslant v\leqslant 2-u,\\
\frac{6-3u-2v}{2}\big(\mathscr{C}+\widetilde{f}_1+\widetilde{f}_2+2\widetilde{g}_1+2\widetilde{g}_2\big)&\quad\text{ if } 2-u\leqslant v\leqslant\frac{6-3u}{2},
\end{cases}
$$
and
$$
N(u,v)=\begin{cases}
0&\quad\text{ if } 0\leqslant v\leqslant 2-u,\\
(v+u-2)\big(\widetilde{f}_1+\widetilde{f}_2+2\widetilde{g}_1+2\widetilde{g}_2\big)&\quad\text{ if } 2-u\leqslant v\leqslant\frac{6-3u}{2}.
\end{cases}
$$
Therefore, if $\mathscr{C}=\widetilde{C}^\prime$, then we compute $S(W^\mathscr{S}_{\bullet,\bullet};\mathscr{C})$ as follows:
\begin{multline*}\hspace*{-0.7cm}
\frac{1}{10}\int\limits_{1}^{2}(5u^2-20u+20)(u-1)du+\frac{1}{10}\int\limits_{0}^{1}\int\limits_{0}^{1}(2-u)(6-u-4v)dvdu+
\frac{1}{10}\int\limits_{0}^{1}\int\limits_{1}^{\frac{4-u}{2}}(4-u-2v)^2dvdu+\\
+\frac{1}{10}\int\limits_{1}^{2}\int\limits_{0}^{2-u}(2-u)(10-5u-4v)dvdu+\frac{1}{10}\int\limits_{1}^{2}\int\limits_{2-u}^{\frac{6-3u}{2}}(6-3u-2v)^2dvdu=\frac{43}{60},
\end{multline*}
If $\widetilde{Z}\ne\widetilde{C}^\prime$, similar computations give $S(W^\mathscr{S}_{\bullet,\bullet};\mathscr{C})=\frac{27}{40}$,
since $\widetilde{Z}\not\subset\mathrm{Supp}(N(u))$ for $u\in[0,2]$.
Hence, it follows from \eqref{equation:Kento-curve} that $\beta(\mathbf{F})>0$, because $S_{Y}(\mathscr{S})=\frac{17}{30}<1$.
\end{proof}

The proof of Lemma~\ref{lemma:2-22-curve-HC-prime} implies the following result:

\begin{lemma}
\label{lemma:2-22-O-prime}
Suppose that $\pi(Z)=O^\prime$. Then $\beta(\mathbf{F})>0$.
\end{lemma}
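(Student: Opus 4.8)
The plan is to apply the refined Abban--Zhuang estimate \eqref{equation:Kento-point} at the point $P$ lying over $O^\prime$, reusing the surface $\mathscr{S}$ and the Zariski decomposition produced in the proof of Lemma~\ref{lemma:2-22-curve-HC-prime}. Since $O^\prime$ is a smooth point of $X$ lying off $C_{\infty}$, its preimage $P$ on $\widetilde{X}$ is a single point, and because $O^\prime\in H_{C^\prime}$ it lies on the proper transform $\mathscr{S}$ of $H_{C^\prime}$, the weak del Pezzo surface of degree $5$ analysed there. On $\mathscr{S}$ the point $O^\prime$ is the intersection $\widetilde{l}_1\cap\widetilde{l}_2$ of the two tangent lines, i.e.\ the node of the reducible member $l_1+l_2$ of the pencil $\mathcal{P}$; the conics $\widetilde{C}^\prime_r$, the line $\widetilde{l}$, and the exceptional curves $\widetilde{f}_i,\widetilde{g}_i$ all avoid $P$. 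In particular there is no smooth $G$-invariant curve through $P$, so I would take the (non-invariant) smooth curve $\mathscr{C}=\widetilde{l}_1$ to complete the flag $P\in\mathscr{C}\subset\mathscr{S}$; this is harmless, because \eqref{equation:Kento-point} bounds the intrinsic invariant $\delta_P(\widetilde{X})$, which controls $A_{\widetilde{X}}(\mathbf{F})/S_{\widetilde{X}}(\mathbf{F})$ for every divisor $\mathbf{F}$ centred at $P$, irrespective of $G$.

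With $Y=\widetilde{X}$ I would retain $S_Y(\mathscr{S})=\frac{17}{30}<1$ and the formula \eqref{equation:2-12-H-C-prime} for $P(u)\vert_{\mathscr{S}}$. The first computation is the secondary Zariski decomposition of $P(u)\vert_{\mathscr{S}}-v\widetilde{l}_1$ on $\mathscr{S}$: using the relation $\widetilde{C}^\prime\sim\widetilde{l}_1+\widetilde{l}_2$ together with the intersection numbers of the generators of $\overline{\mathrm{NE}}(\mathscr{S})$, I would find $t(u)$ and the parts $P(u,v)$, $N(u,v)$ on the ranges $u\in[0,1]$ and $u\in[1,2]$, and integrate to obtain $S(W^{\mathscr{S}}_{\bullet,\bullet};\mathscr{C})$. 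The second computation is the point term: one evaluates $\big(P(u,v)\cdot\widetilde{l}_1\big)$ and the order $\mathrm{ord}_P\big(N^\prime(u)\vert_{\mathscr{C}}+N(u,v)\vert_{\mathscr{C}}\big)$, whose only source is the curve $\widetilde{l}_2$ (which meets $\mathscr{C}=\widetilde{l}_1$ exactly at $P$) once it enters the negative part, the remaining negative-part components being disjoint from $P$; integrating then gives $F_P$ and hence $S(W^{\mathscr{S},\mathscr{C}}_{\bullet,\bullet,\bullet};P)$.

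Once one checks that both $S(W^{\mathscr{S}}_{\bullet,\bullet};\mathscr{C})<1$ and $S(W^{\mathscr{S},\mathscr{C}}_{\bullet,\bullet,\bullet};P)<1$, inequality \eqref{equation:Kento-point} yields $\delta_P(\widetilde{X})>1$, whence $\beta(\mathbf{F})>0$ for every $G$-invariant prime divisor with $\pi(Z)=O^\prime$; should either quantity equal $1$, Remark~\ref{remark:Kento-point-strict} still gives the conclusion. The main obstacle is the bookkeeping on the degree-$5$ surface $\mathscr{S}$: the secondary decomposition of $P(u)\vert_{\mathscr{S}}-v\widetilde{l}_1$ branches according to which extremal rays enter $N(u,v)$ as $v$ grows, and the point contribution is governed by the incidence pattern at $P$ (that $\widetilde{l}_1$ and $\widetilde{l}_2$ meet solely at $P$ while $\widetilde{f}_i,\widetilde{g}_i,\widetilde{l}$ miss it). Pinning down these incidences correctly, rather than performing the resulting elementary integrals, is where the care lies.
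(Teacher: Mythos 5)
Your proposal follows essentially the same route as the paper: the same flag $P=\widetilde{l}_1\cap\widetilde{l}_2\in\mathscr{C}=\widetilde{l}_1\subset\mathscr{S}$ on the degree-$5$ surface from Lemma~\ref{lemma:2-22-curve-HC-prime}, with $S_Y(\mathscr{S})=\frac{17}{30}$ reused, the secondary Zariski decompositions computed on $[0,1]$ and $[1,2]$, and the point term $F_P$ sourced solely from $\widetilde{l}_2$ entering $N(u,v)$ --- exactly the incidence pattern the paper exploits. One point worth noting: in the paper both invariants come out exactly equal to $1$, i.e.\ $S(W^{\mathscr{S}}_{\bullet,\bullet};\mathscr{C})=S(W^{\mathscr{S},\mathscr{C}}_{\bullet,\bullet,\bullet};P)=1$, so the fallback you mention via Remark~\ref{remark:Kento-point-strict} (legitimate here since $\widetilde{Z}=P$) is not a contingency but the step that actually closes the argument.
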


\begin{proof}
Let us use all assumptions and notations introduced in the proof of Lemma~\ref{lemma:2-22-curve-HC-prime} with one exception:
now we let $\mathscr{C}=\widetilde{l}_1$.
Set $P=\widetilde{l}_1\cap \widetilde{l}_2$. Then $P=\widetilde{Z}$ and $\gamma\circ\phi(P)=O^\prime$.

Since $\widetilde{C}^\prime\sim\widetilde{l}_1+\widetilde{l}_2$
and $\widetilde{l}_1+\widetilde{f}_1+2\widetilde{g}_1\sim\widetilde{l}_2+\widetilde{f}_2+2\widetilde{g}_2$,
it follows from \eqref{equation:2-12-H-C-prime} that
$$
P(u)\big\vert_{\mathscr{S}}-v\mathscr{C}\sim_{\mathbb{R}}
\begin{cases}
(3-u-v)\mathscr{C}+\widetilde{l}_2+(2-u)\widetilde{f}_1+(4-2u)\widetilde{g}_1 &\quad  \text{if } 0\leqslant u\leqslant 1,\\
(4-2u-v)\mathscr{C}+(2-u)\widetilde{l}_2+(2-u)\widetilde{f}_1+(4-2u)\widetilde{g}_1  &\quad  \text{if } 1\leqslant u\leqslant 2.
\end{cases}
$$
If $u\in[0,1]$, then $t(u)=3-u$.
If $u\in[1,2]$, then $t(u)=4-2u$.
If $u\in[0,1]$, then
$$
P(u,v)\sim_{\mathbb{R}}\begin{cases}
(3-u-v)\mathscr{C}+\widetilde{l}_2+(2-u)\widetilde{f}_1+(4-2u)\widetilde{g}_1&\quad \text{ if } 0\leqslant v \leqslant 1,\\
(3-u-v)\big(\mathscr{C}+\widetilde{f}_1+2\widetilde{g}_1\big)+\widetilde{l}_2&\quad \text{ if } 1\leqslant v \leqslant 2-u,\\
(3-u-v)\big(\mathscr{C}+\widetilde{l}_2+\widetilde{f}_1+2\widetilde{g}_1\big)&\quad \text{ if } 2-u\leqslant v \leqslant 3-u,
\end{cases}
$$
and
$$
N(u,v)=\begin{cases}
0&\quad \text{ if } 0\leqslant v \leqslant 1,\\
(v-1)\big(\widetilde{f}_1+2\widetilde{g}_1\big) &\quad \text{ if } 1\leqslant v \leqslant 2-u,\\
(v-1)\big(\widetilde{f}_1+2\widetilde{g}_1\big)+(v+u-2)\widetilde{l}_2 &\quad \text{ if } 2-u\leqslant v \leqslant 3-u,
\end{cases}
$$
which gives
$$
\big(P(u,v)\big)^2=\begin{cases}
u^2+2uv-v^2-8u-4v+12&\quad \text{ if } 0\leqslant v \leqslant 1,\\
u^2+2uv+v^2-8u-8v+14&\quad \text{ if } 1\leqslant v \leqslant 2-u,\\
2(3-u-v)^2 &\quad \text{ if } 2-u\leqslant v \leqslant 3-u,
\end{cases}
$$
and
$$
P(u,v)\cdot\mathscr{C}=\begin{cases}
2-u+v&\quad \text{ if } 0\leqslant v \leqslant 1,\\
4-u-v&\quad \text{ if } 1\leqslant v \leqslant 2-u,\\
6-2u-2v &\quad \text{ if } 2-u\leqslant v \leqslant 3-u.
\end{cases}
$$
Similarly, if $u\in[1,2]$, then
$$
P(u,v)\sim_{\mathbb{R}}\begin{cases}
(4-2u-v)\mathscr{C}+(2-u)\widetilde{l}_2+(2-u)\widetilde{f}_1+(4-2u)\widetilde{g}_1&\quad \text{if } 0\leqslant v \leqslant 2-u,\\
(4-2u-v)\big(\mathscr{C}+\widetilde{l}_2+\widetilde{f}_1+2\widetilde{g}_1\big)&\quad \text{if } 2-u\leqslant v \leqslant 4-2u,
\end{cases}
$$
and
$$
N(u,v)=\begin{cases}
0&\quad \text{if } 0\leqslant v \leqslant 2-u,\\
(v+u-2)\big(\widetilde{l}_2+\widetilde{f}_1+2\widetilde{g}_1\big)&\quad \text{if } 2-u\leqslant v \leqslant 4-2u,
\end{cases}
$$
which implies that
$$
\big(P(u,v)\big)^2=\begin{cases}
5u^2+2uv-v^2-20u-4v+20&\quad \text{if } 0\leqslant v \leqslant 2-u,\\
2(4-2u-v)^2&\quad \text{if } 2-u\leqslant v \leqslant 4-2u,
\end{cases}
$$
and
$$
P(u,v)\cdot\mathscr{C}=\begin{cases}
2-u+v&\quad \text{if } 0\leqslant v \leqslant 2-u,\\
8-4u-2v&\quad \text{if } 2-u\leqslant v \leqslant 4-2u.
\end{cases}
$$
Observe that $d(u)=0$ for $u\in[0,2]$, since $\mathscr{C}\not\subset\mathrm{Supp}(N(u))$.
Therefore, integrating, we get $S(W^\mathscr{S}_{\bullet,\bullet};\mathscr{C})=1$.
Similarly, we compute
$$
F_P\big(W_{\bullet,\bullet,\bullet}^{\mathscr{S},\mathscr{C}}\big)=\frac{1}{5}\int\limits_0^1\int\limits_{2-u}^{3-u}(v+u-2)\big(P(u,v)\cdot \mathscr{C}\big)dvdu+\frac{1}{5}\int\limits_1^2\int\limits_{2-u}^{4-2u}(v+u-2)\big(P(u,v)\cdot \mathscr{C}\big)dvdu=\frac{1}{12}
$$
and $S(W_{\bullet, \bullet,\bullet}^{\mathscr{S},\mathscr{C}};P)=1$. Thus, it follows from Remark~\ref{remark:Kento-point-strict} that $\beta(\mathbf{F})>0$, since  $S_{Y}(\mathscr{S})<1$.
\end{proof}

Finally, we prove the following result

\begin{lemma}
\label{lemma:2-22-curve-E}
Suppose that $\overline{Z}$ be a $G$-invariant irreducible curve in $\overline{E}$. Then $\beta(\mathbf{F})>0$.
\end{lemma}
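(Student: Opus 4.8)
The plan is to apply the Abban--Zhuang inequality \eqref{equation:Kento-curve} on the small resolution $Y=\overline{X}$ with $\mathscr{S}=\overline{E}$ and $\mathscr{C}=\overline{Z}$. By Lemma~\ref{lemma:2-22-curves-E}, the curve $\overline{Z}$ is either $\overline{E}\cap\overline{Q}$ or $\overline{E}\cap\overline{H}_C$, so in either case $\mathscr{C}=\overline{Z}$ is an irreducible $G$-invariant curve lying on the smooth surface $\mathscr{S}=\overline{E}$, and $\mathscr{C}=Z_Y$. From the proof of Lemma~\ref{lemma:2-22-divisorial} we already know that $S_Y(\mathscr{S})=\frac{19}{30}<1$, together with the global Zariski decomposition $-K_{\overline{X}}-u\overline{E}\sim_{\mathbb{R}}P(u)+N(u)$ on $\overline{X}$, where $\tau=2$, $N(u)=0$ on $[0,1]$ and $N(u)=(u-1)\overline{Q}+2(u-1)\overline{H}_C$ on $[1,2]$. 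Hence, by \eqref{equation:Kento-curve} and Remark~\ref{remark:Kento-curve-strict}, it suffices to verify that $S(W^{\mathscr{S}}_{\bullet,\bullet};\mathscr{C})\leqslant 1$ for both choices of $\mathscr{C}$.

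To carry this out I first record the geometry of the surface $\mathscr{S}=\overline{E}$. The normal bundle of the conic $C\subset\mathbb{P}^3$ is $\mathcal{O}(4)\oplus\mathcal{O}(2)$, so the $\varphi$-exceptional surface $E$ is isomorphic to $\mathbb{F}_2$, in agreement with \cite[Lemma~2.6]{CheltsovSuess}. Since each line $L_i$ meets $C$ transversally in a single point, the morphism $\delta$ blows up $E$ at the two points where the proper transforms of $L_1$ and $L_2$ meet $E$; these points are interchanged by $G$, so $\overline{E}$ is $\mathbb{F}_2$ blown up at two points. I then identify on $\overline{E}$ the classes of the $(-2)$-section $\overline{E}\cap\overline{H}_C$, of a ruling, of the two $\delta$-exceptional $(-1)$-curves $\overline{F}_i\vert_{\overline{E}}$, and of the conic $\overline{E}\cap\overline{Q}$, and compute their pairwise intersections, using the triple intersection numbers recorded after the diagram (in particular $(\overline{E}\vert_{\overline{E}})^2=\overline{E}^3=-6$ and $\overline{E}\vert_{\overline{E}}\cdot\overline{H}\vert_{\overline{E}}=-2$) together with adjunction. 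This provides the intersection data and the description of the Mori cone of $\overline{E}$ needed to run the flag computation.

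Next I restrict the positive part of the global decomposition to $\mathscr{S}$. Using $\overline{Q}\vert_{\overline{E}}=\overline{E}\cap\overline{Q}$, $\overline{H}_C\vert_{\overline{E}}=\overline{E}\cap\overline{H}_C$ and the self-restriction $\overline{E}\vert_{\overline{E}}$, I express $P(u)\vert_{\mathscr{S}}$ as an explicit $\mathbb{R}$-divisor on $\overline{E}$ separately for $u\in[0,1]$ and $u\in[1,2]$. For each $u$ I then determine the refined Zariski decomposition $P(u)\vert_{\mathscr{S}}-v\mathscr{C}=P(u,v)+N(u,v)$ on the surface $\overline{E}$, compute the threshold $t(u)$, and integrate $(P(u,v))^2$ to evaluate $S(W^{\mathscr{S}}_{\bullet,\bullet};\mathscr{C})$. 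This is done twice, once for $\mathscr{C}=\overline{E}\cap\overline{Q}$ and once for $\mathscr{C}=\overline{E}\cap\overline{H}_C$.

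The main obstacle is the surface-level bookkeeping on the blown-up $\mathbb{F}_2$: as $v$ increases, the negative part $N(u,v)$ successively absorbs the two $(-1)$-curves $\overline{F}_i\vert_{\overline{E}}$ and, for $\mathscr{C}=\overline{E}\cap\overline{Q}$, also the $(-2)$-section, so the inner $v$-integral breaks into several regimes whose endpoints depend on $u$, and these in turn interact with the two $u$-regimes coming from $N(u)$. I expect the resulting value of $S(W^{\mathscr{S}}_{\bullet,\bullet};\mathscr{C})$ to be at most $1$, and quite possibly exactly $1$ in one of the two cases---as already happened in Lemmas~\ref{lemma:2-22-curve-in-HC} and~\ref{lemma:2-22-O-prime}---so that the sharp form provided by Remark~\ref{remark:Kento-curve-strict} is what ultimately yields $\beta(\mathbf{F})>0$.
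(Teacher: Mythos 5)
Your proposal is correct and, in its main case, is essentially the paper's proof: the paper also runs the Abban--Zhuang flag on $Y=\overline{X}$ with $\mathscr{S}=\overline{E}$, uses $S_Y(\overline{E})=\frac{19}{30}$ together with the global decomposition from Lemma~\ref{lemma:2-22-divisorial} (whose restriction to $\overline{E}$ one computes on $\mathbb{F}_2$ blown up at two points, exactly as you describe), and obtains $S\big(W^{\mathscr{S}}_{\bullet,\bullet};\mathscr{C}\big)=\frac{43}{60}<1$ for $\mathscr{C}=\overline{E}\cap\overline{Q}$. The one genuine divergence is the subcase $\overline{Z}=\overline{E}\cap\overline{H}_C$: the paper does not recompute on $\overline{E}$ at all, but notes that this curve is also a $G$-invariant curve in $\widetilde{H}_C$ (it is the curve $\widetilde{C}=\widetilde{E}\vert_{\widetilde{H}_C}$ on the other small resolution) and simply quotes Lemma~\ref{lemma:2-22-curve-in-HC}, where $S=\frac{53}{80}$. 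You instead run the flag on $\overline{E}$ a second time; this does work --- writing $\overline{s}=\overline{E}\cap\overline{H}_C$, $\overline{s}_0=\overline{E}\cap\overline{Q}\sim\overline{s}+\overline{g}_1+\overline{g}_2$ and carrying out the refined decomposition, one gets $S\big(W^{\overline{E}}_{\bullet,\bullet};\overline{s}\big)=\frac{1}{10}\big(1+\frac{19}{3}+\frac{11}{6}\big)=\frac{11}{12}<1$ --- so your version is self-contained within the lemma, at the cost of one extra Zariski-decomposition analysis, while the paper's reduction re-uses earlier work. Note that in neither case does the value actually hit $1$, so Remark~\ref{remark:Kento-curve-strict} is not needed here, unlike in Lemmas~\ref{lemma:2-22-curve-in-HC} and~\ref{lemma:2-22-O-prime}.

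Two bookkeeping points where your sketch is off and where the executed computation must be careful. First, since $\overline{Q}\vert_{\overline{E}}=\overline{s}_0$ and $\overline{H}_C\vert_{\overline{E}}=\overline{s}$, for $u\in[1,2]$ one has $N(u)\vert_{\overline{E}}=(u-1)\overline{s}_0+2(u-1)\overline{s}$, so \emph{whichever} curve you take as $\mathscr{C}$ lies in $\mathrm{Supp}(N(u)\vert_{\mathscr{S}})$, with $d(u)=u-1$ for $\mathscr{C}=\overline{s}_0$ and $d(u)=2(u-1)$ for $\mathscr{C}=\overline{s}$. Your plan says you will ``integrate $(P(u,v))^2$ to evaluate $S(W^{\mathscr{S}}_{\bullet,\bullet};\mathscr{C})$,'' which omits the term $\frac{3}{(-K_X)^3}\int_0^{\tau}d(u)\big(P(u,0)\big)^2du$; that term contributes $\frac{1}{20}$, respectively $\frac{1}{10}$, and dropping it would \emph{under}estimate $S$, invalidating the application of \eqref{equation:Kento-curve} (here the true values are still below $1$, but the argument must include it). Second, your prediction of which curves enter $N(u,v)$ is reversed: for $\mathscr{C}=\overline{s}_0$ only the $\delta$-exceptional curves $\overline{f}_i=\overline{F}_i\vert_{\overline{E}}$ are absorbed (the $(-2)$-curve $\overline{s}$ occurs in $N(u)\vert_{\overline{E}}$, not in $N(u,v)$), while for $\mathscr{C}=\overline{s}$ it is the fibre $(-1)$-curves $\overline{g}_i$ through the blown-up points, since $\overline{s}\cdot\overline{f}_i=0$ and $\overline{s}\cdot\overline{g}_i=1$. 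Relatedly, to pin down the Mori cone you need that the two blown-up points lie on the section $\delta(\overline{s}_0)$, which holds because $L_1,L_2\subset Q$, so the tangent directions of the $L_i$ at $L_i\cap C$ land on $E\cap\overline{Q}$; this is implicit in your ``compute pairwise intersections'' step but worth making explicit.
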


\begin{proof}
We have $\delta(\overline{E})\cong\mathbb{F}_2$, see  the proof of Lemma~\ref{lemma:2-22-curves-E}.
Set $\overline{s}_0=\overline{Q}\cap\overline{E}$ and $\overline{s}=\overline{H}_C\cap \overline{E}$.
Then $\delta(\overline{s})$ is the unique $(-2)$-curve in $\delta(\overline{E})$,
and $\delta(\overline{s}_0)$ is a section of the projection $\delta(\overline{E})\to C$ disjoint from $\delta(\overline{s})$.
The morphism $\delta$ induces a birational map $\xi\colon\overline{E}\to \delta(\overline{E})$ that blows up two points in $\delta(\overline{s}_0)$.

Set $\overline{f}_1=\overline{F}_1\cap\overline{E}$ and $\overline{f}_2=\overline{F}_2\cap\overline{E}$.
Observe that $\overline{f}_1$ and $\overline{f}_2$ are the $\xi$-exceptional curves.
Let $\overline{g}_1$ and $\overline{g}_1$ be the proper transforms on $\overline{E}$ of the fibres of the projection
$\delta(\overline{E})\to C$ that pass through $\xi(\overline{f}_1)$ and $\xi(\overline{f}_2)$, respectively.
The curves $\overline{f}_1$, $\overline{f}_2$, $\overline{g}_1$, $\overline{g}_2$ are $(-1)$-curves,
and the curves $\overline{s}$, $\overline{f}_1$, $\overline{f}_2$, $\overline{g}_1$, $\overline{g}_2$
generates the Mori cone $\overline{\mathrm{NE}(\overline{E})}$.
Note that $\overline{s}_0$ is a $(0)$-curve.

The curves $\overline{s}$ and $\overline{s}_0$ are the only $G$-invariant irreducible curves in $\overline{E}$ by Lemma~\ref{lemma:2-22-curves-E}.
Moreover, if $\overline{Z}=\overline{s}$, then $\beta(\mathbf{F})>0$ by Lemma~\ref{lemma:2-22-curve-in-HC}.
Thus, we may assume that $\overline{Z}=\overline{s}_0$.

Let $Y=\overline{X}$, $\mathscr{S}=\overline{E}$ and $\mathscr{C}=\overline{s}_0$.
Then it follows from the proof of Lemma~\ref{lemma:2-22-divisorial} that
$$
P(u)\big\vert_{\mathscr{S}}-v\mathscr{C}\sim_{\mathbb{R}}\begin{cases}
(1+u-v)\mathscr{C}+(2-u)(\overline{f}_1+\overline{f}_2)+(2-2u)(\overline{g}_1+\overline{g}_2) & \quad \text{if } 0\leqslant u\leqslant 1,\\
(4-2u-v)\mathscr{C}+(2-u)(\overline{f}_1+\overline{f}_2) & \quad \text{if } 1\leqslant u\leqslant 2.
\end{cases}
$$
Moreover, if $u\in[0,1]$, then $t(u)=1+u$ and $N(u)\vert_{\mathscr{S}}=0$.
Furthermore, if $u\in[1,2]$, then $t(u)=4-2u$ and $N(u)\vert_{\mathscr{S}}=(u-1)\mathscr{C}+2(u-1)\overline{s}$.
If $u\in[0,1]$, then
$$
P(u,v) = \begin{cases}
(1+u-v)\mathscr{C}+(2-u)(\overline{f}_1+\overline{f}_2)+(2-2u)(\overline{g}_1+\overline{g}_2)& \quad \text{if } 0\leqslant v \leqslant 1,\\
(1+u-v)\mathscr{C}+(3-u-v)(\overline{f}_1+\overline{f}_2)+(2-2u)(\overline{g}_1+\overline{g}_2)	& \quad \text{if } 1\leqslant v \leqslant 1+u,
\end{cases}
$$
and
$$
N(u,v)=\begin{cases}
0& \quad \text{if } 0\leqslant v \leqslant u,\\
(v-1)(\overline{f}_1+\overline{f}_2)& \quad \text{if } u\leqslant v \leqslant 1+u.
\end{cases}
$$
Similarly, if $u\in[1,2]$, then	
$$
P(u,v) = \begin{cases}
(4-2u-v)\mathscr{C}+(2-u)(\overline{f}_1+\overline{f}_2) & \quad \text{if } 0\leqslant v \leqslant 2-u,\\
(4-2u-v)\big(\mathscr{C}+\overline{f}_1+\overline{f}_2\big) & \quad \text{if } 2-u\leqslant v \leqslant 4-2u,
\end{cases}
$$
and
$$
N(u,v)=\begin{cases}
0& \quad \text{if } 0\leqslant v \leqslant 2-u,\\
(v+u-2)(\overline{f}_1+\overline{f}_2)& \quad \text{if } 2-u\leqslant v \leqslant 4-2u.
\end{cases}
$$
Now, we compute $S(W^\mathscr{S}_{\bullet,\bullet};\mathscr{C})$ as follows:
\begin{multline*}\hspace*{-1.2cm}
\frac{1}{10}\int\limits_{1}^{2} (u-1)(6u^{2}-24 u +24)du+\frac{1}{10}\int\limits_{0}^1\int\limits_{0}^{1}8-6u^{2}+4uv+4u-8vdvdu+
\frac{1}{10}\int\limits_{0}^1\int\limits_{1}^{u+1}2(5-3u-v)(u+1-v)dvdu+\\
+\frac{1}{10}\int\limits_{1}^2\int\limits_{0}^{2-u}6u^{2}+4uv-24u-8v+24dvdu+\frac{1}{10}\int\limits_{1}^2\int\limits_{2-u}^{4-2u}2(4-2u-v)^2dvdu=\frac{43}{60}.
\end{multline*}
But $S_Y(\mathscr{S})=\frac{19}{30}$, see the proof of Lemma~\ref{lemma:2-22-divisorial}.
Then \eqref{equation:Kento-curve} gives $\beta(\mathbf{F})>0$.
\end{proof}

By Lemmas~\ref{lemma:2-22-points}, \ref{lemma:2-22-curves}, \ref{lemma:2-22-divisorial}, \ref{lemma:2-22-curve-in-HC},
\ref{lemma:2-22-point-HC}, \ref{lemma:2-22-curve-HC-prime}, \ref{lemma:2-22-O-prime},
\ref{lemma:2-22-curve-E}, we have $\beta(\mathbf{F})>0$ in all possible cases except maybe when  $\pi(Z)=l^\prime$.
But in this case, we have $O\in\pi(Z)$, so $\beta(\mathbf{F})>0$ by Lemma~\ref{lemma:2-22-point-HC}.
Thus, we conclude that $X$ is K-polystable by \cite[Corollary~4.14]{Zhuang}.

\section{K-polystability of the Fano 3-fold $X_{\infty}$ in Family \ref{example:3-12}}
\label{section:3-12}

Consider the following lines in $\mathbb{P}^3$:
$L=\{x_0=0,x_3=0\}$,
$\ell_1=\{x_1=x_0,x_2=0\}$,
$\ell_2=\{x_1=0,x_2=0\}$,
$\ell_3=\{x_2=x_3,x_1=0\}$,
where $x_0$, $x_1$, $x_2$, $x_3$ are coordinates on~$\mathbb{P}^3$.
Then $L$ is disjoint from $\ell_1$, $\ell_2$, $\ell_3$,
the lines $\ell_1$ and $\ell_3$ are disjoint, $\ell_2\cap \ell_1=[0:0:0:1]$ and $\ell_2\cap \ell_3=[1:0:0:0]$.
Let $\pi\colon X\to \mathbb{P}^3$ be the blowup of the curve $L+\ell_1+\ell_2+\ell_3$,
and let $\chi\colon\mathbb{P}^3\dasharrow \mathbb{P}^1\times\mathbb{P}^2$ be the dominant rational map given by
$$
[x_0:x_1:x_2:x_3]\mapsto\big([x_0:x_3],[x_1(x_0-x_1):x_1x_2:x_2(x_3-x_2)]\big).
$$
Then $\chi$ is undefined along $L\cup\ell_1\cup\ell_2\cup\ell_3$, $\pi$ resolves the indeterminacy of $\chi$,
and there exists a birational morphism
$\eta\colon X\to \mathbb{P}^1\times\mathbb{P}^2$ that fits in the following commutative diagram:
\begin{equation}
\label{equation:3-12-P1-P2}
\xymatrix@R=6mm{
&X\ar[dl]_{\pi}\ar[dr]^{\eta}&\\
\mathbb{P}^3\ar@{-->}[rr]^{\chi}&&\mathbb{P}^1\times\mathbb{P}^2.}
\end{equation}
To describe $\eta$, set $H_{12}=\{x_2=0\}$ and $H_{23}=\{x_1=0\}$, and denote
$$
Q=\big\{x_0x_2+x_1x_3-x_0x_3=0\big\}.
$$
Then $H_{12}$ is the plane containing the lines $\ell_1$ and $\ell_2$,
$H_{23}$ is the plane containing $\ell_2$ and~$\ell_3$,
and $Q$ is the unique smooth quadric in $\mathbb{P}^3$ that contains $L$, $\ell_1$, and $\ell_3$. Further,
$\chi(H_{12})$ is the curve $\mathbb P^1\times \{[1:0:0])\}$,
$\chi(H_{23}) = \mathbb P^1\times \{[0:0:1])\}$, and
$\chi(Q)$ is the curve parametrised as $([u:v],[u^2:uv:v^2])$,
where $[u:v]\in\mathbb{P}^1$.
Therefore, we see that $\eta$ contracts the proper transforms of the surfaces $H_{12}$, $H_{23}$, $Q$
to the curves $\chi(H_{12})$, $\chi(H_{23})$, $\chi(Q)$, respectively.
Note that these curves are contained
in the preimage of the smooth conic $\{xz=y^2\}$ via the  projection $\mathbb{P}^1\times\mathbb{P}^2\to\mathbb{P}^2$,
where $[x:y:z]$ are coordinates on $\mathbb{P}^2$. Observe  also that
$$
\chi(Q)=\big\{uy=vx,vy=uz\big\}\subset\mathbb{P}^1\times\mathbb{P}^2.
$$
Finally, note that $\chi(H_{12})$ and $\chi(H_{23})$ are the fibres of the projection $\mathbb{P}^1\times\mathbb{P}^2\to\mathbb{P}^2$
over the points $[1:0:0]$ and $[0:0:1]$, respectively. Hence, we have the following conclusion.

\begin{corollary}
\label{corollary:3-12-another-model}
The 3-fold $X$ is isomorphic to the 3-fold $X_\infty$ described in Family \ref{example:3-12}.
\end{corollary}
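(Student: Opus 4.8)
The plan is to extract from diagram \eqref{equation:3-12-P1-P2} an explicit blowup description of $X$ over $\mathbb{P}^1\times\mathbb{P}^2$ and then to match the centre of that blowup with the degenerate member $C_\infty$ of Family \ref{example:3-12} by means of an automorphism of $\mathbb{P}^1\times\mathbb{P}^2$. First I would record what the preceding discussion established: $\eta\colon X\to\mathbb{P}^1\times\mathbb{P}^2$ is a birational morphism whose exceptional locus is the union of the proper transforms of $H_{12}$, $H_{23}$, $Q$, and these three surfaces are contracted onto the curves $\chi(H_{12})=\mathbb{P}^1\times\{[1:0:0]\}$, $\chi(H_{23})=\mathbb{P}^1\times\{[0:0:1]\}$ and $\chi(Q)=\{uy=vx,\ vy=uz\}$. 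Setting $C=\chi(H_{12})\cup\chi(H_{23})\cup\chi(Q)$, I would argue that $\eta$ is precisely the blowup of $\mathbb{P}^1\times\mathbb{P}^2$ along the reduced curve $C$: it is an isomorphism over the complement of $C$, over the smooth points of each component the corresponding exceptional surface is the projectivised normal bundle, and the two intersection points $\chi(Q)\cap\chi(H_{12})=([1:0],[1:0:0])$ and $\chi(Q)\cap\chi(H_{23})=([0:1],[0:0:1])$ are nodes of $C$ whose blowup accounts for the two ordinary double points of $X$. A Picard-rank count corroborates the picture, since $\rho(X)=\rho(\mathbb{P}^3)+4=5=\rho(\mathbb{P}^1\times\mathbb{P}^2)+3$.

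Next I would compute $C_\infty$ in these coordinates. In the notation of Family \ref{example:2-22}, the limit $C_\infty\subset\mathbb{P}^1\times\mathbb{P}^1$ is the degenerate member $\{xy(ux-vy)=0\}$, that is, the union of the two rulings $\{x=0\}$, $\{y=0\}$ and the $(1,1)$-curve $\{ux=vy\}$. Applying the embedding $([u:v],[x:y])\mapsto([u:v],[x^2:xy:y^2])$, the ruling $\{x=0\}$ maps to $\mathbb{P}^1\times\{[0:0:1]\}$, the ruling $\{y=0\}$ to $\mathbb{P}^1\times\{[1:0:0]\}$, and $\{ux=vy\}$ to the curve $\{([b:a],[a^2:ab:b^2])\}$. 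Thus $C_\infty$ has the same configuration as $C$: a conic contained in the Veronese $\{xz=y^2\}$ meeting two disjoint $\mathbb{P}^1$-fibres, in one point each.

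Finally I would exhibit the explicit identification. Let $g$ be the automorphism of $\mathbb{P}^1\times\mathbb{P}^2$ acting by $[u:v]\mapsto[v:u]$ on the first factor and trivially on the second. It fixes both $\chi(H_{12})$ and $\chi(H_{23})$, since it only permutes points inside each fibre $\mathbb{P}^1\times\{\mathrm{pt}\}$, and it carries $\chi(Q)=\{([s:t],[s^2:st:t^2])\}$ to $\{([t:s],[s^2:st:t^2])\}=\{([b:a],[a^2:ab:b^2])\}$, which is exactly the image of $\{ux=vy\}$. Hence $g(C)=C_\infty$, so $g$ lifts to an isomorphism $\mathrm{Bl}_{C}(\mathbb{P}^1\times\mathbb{P}^2)\cong\mathrm{Bl}_{C_\infty}(\mathbb{P}^1\times\mathbb{P}^2)$. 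Combining this with $\eta\colon X\cong\mathrm{Bl}_{C}(\mathbb{P}^1\times\mathbb{P}^2)$ from the first step gives $X\cong\mathrm{Bl}_{C_\infty}(\mathbb{P}^1\times\mathbb{P}^2)=X_\infty$, as required.

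I expect the main obstacle to be the first step, namely justifying rigorously that $\eta$ is the blowup of the reduced nodal curve $C$ rather than merely a birational morphism contracting three divisors. The delicate point is the local analysis at the two nodes: one must check that blowing up the ideal of two branches meeting transversally in a smooth surface (locally $(x,yz)$) produces exactly an ordinary double point, matching the two singular points of $X$, and not a worse singularity or a different small modification. The matching of $C$ with $C_\infty$, by contrast, is a short explicit computation handled by the automorphism $g$.
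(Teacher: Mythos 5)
Your proposal is correct and takes essentially the same route as the paper, which states this corollary as an immediate consequence of the diagram \eqref{equation:3-12-P1-P2} and of the identification of the three contracted surfaces with the curves $\chi(H_{12})$, $\chi(H_{23})$, $\chi(Q)$, giving no further proof. The details you supply --- that $\eta$ is the blowup of the reduced nodal curve (with one ordinary double point over each of the two nodes, matching the nodes $\ell_1\cap\ell_2$ and $\ell_2\cap\ell_3$ upstairs) and the automorphism $[u:v]\mapsto[v:u]$ carrying $\chi(Q)=\{([u:v],[u^2:uv:v^2])\}$ to the embedded conic of $C_\infty$ --- are exactly the checks the paper leaves implicit, and your computations are accurate.
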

\paragraph*{\bf Description of the automorphism group}
Let $\tau$ be the involution of $\mathbb{P}^3$ defined by
$[x_0:x_1:x_2:x_3]\mapsto[x_3:x_2:x_1:x_0]$,
and let $\Gamma$ be the subgroup in $\mathrm{Aut}(\mathbb{P}^3)$ consisting of automorphisms
$$
[x_0:x_1:x_2:x_3]\mapsto\left[\lambda x_0:\lambda x_1:x_2:x_3\right],
$$
where $\lambda\in\mathbb{C}^\ast$.
Set $G=\langle\tau,\Gamma\rangle$. Then $\Gamma\cong\mathbb{C}^\ast$ and $G\cong\mathbb{C}^\ast\rtimes\mumu_2$.
Since $L+\ell_1+\ell_2+\ell_3$ is $G$-invariant,
the action of the group $G$ lifts to $X$.
Hence, we can identify $G$ with a subgroup~in~$\mathrm{Aut}(X)$.
One can check that $\mathrm{Aut}(X)=G$. Note that  \eqref{equation:3-12-P1-P2} is $G$-equivariant.

\paragraph*{\bf Description of the $G$-invariant loci}
Consider the smooth quadric surface $R=\{x_0x_2=x_1x_3\}$.

\begin{lemma}
\label{lemma:3-12-points}
There are no $G$-fixed point or $G$-invariant plane in $\mathbb{P}^3$.
If $S$ is a $G$-invariant irreducible quadric surface in $\mathbb{P}^3$,
then $S=R$ or
$$
S=\{ax_0x_3+bx_1x_2+c(x_0x_2+x_1x_3)=0\}
$$
for some $[a:b:c]\in\mathbb{P}^2$ such that $ab\ne c^2$.
\end{lemma}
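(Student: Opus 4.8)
The plan is to reduce the whole statement to linear algebra on $\Gamma$-eigenvectors, and then exploit the single extra feature of $\tau$: that it interchanges the two $\Gamma$-eigenspaces of linear forms while permuting the graded pieces of quadratic forms in a controlled way. Record the $\Gamma$-weights of $x_0,x_1,x_2,x_3$ as $(1,1,0,0)$. Since $\Gamma\cong\mathbb{C}^\ast$ acts diagonally, a subvariety cut out by a single form is $\Gamma$-invariant precisely when that form is weight-homogeneous, and $\tau$ acts by $x_0\leftrightarrow x_3$, $x_1\leftrightarrow x_2$.

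First I would dispose of fixed points and planes. A $\Gamma$-fixed point must lie on one of the two disjoint coordinate eigenlines $\{x_2=x_3=0\}$ or $\{x_0=x_1=0\}$; since $\tau$ swaps these two lines, no point is fixed by all of $G$. Likewise, a $\Gamma$-semi-invariant linear form lies either in $\langle x_0,x_1\rangle$ or in $\langle x_2,x_3\rangle$, and $\tau$ interchanges these two subspaces, so the image plane can never be proportional to the original; hence there is no $G$-invariant plane.

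The heart of the argument is the quadric classification. Grading the $10$-dimensional space of quadratic forms by $\Gamma$-weight produces pieces of weight $2$ (spanned by $x_0^2,x_0x_1,x_1^2$), weight $0$ (spanned by $x_2^2,x_2x_3,x_3^2$), and weight $1$ (spanned by $x_0x_2,x_0x_3,x_1x_2,x_1x_3$). A $\Gamma$-invariant quadric is weight-homogeneous, and because $\tau$ carries the weight-$2$ piece isomorphically onto the weight-$0$ piece, no purely weight-$2$ or weight-$0$ form can be $\tau$-semi-invariant; thus only the weight-$1$ piece contributes. Writing $q=a x_0x_2+b x_0x_3+c x_1x_2+d x_1x_3$ and imposing $\tau(q)=\mu q$, the relation $\tau^2=\mathrm{id}$ forces $\mu=\pm1$. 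The case $\mu=-1$ yields $b=c=0$ and $d=-a$, giving exactly $R=\{x_0x_2=x_1x_3\}$, while $\mu=1$ yields $d=a$ with $b,c$ free, producing precisely the family $\{a x_0x_3+b x_1x_2+c(x_0x_2+x_1x_3)=0\}$.

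Finally I would impose irreducibility. For a quadric surface this is equivalent to the associated symmetric matrix having rank at least $3$. A direct computation for the weight-$1$ family gives, up to a nonzero scalar, the discriminant $(c^2-ab)^2$, so the quadric is smooth (rank $4$) exactly when $ab\ne c^2$; when $ab=c^2$ the matrix drops to rank $2$ and the quadric becomes a union of two planes, hence reducible. The irreducible members are therefore precisely those with $ab\ne c^2$, and one checks separately that $R$ itself has rank $4$ and so is smooth. The only step demanding genuine care is this last discriminant calculation together with the verification that the rank-$2$ degeneration is exactly the locus $ab=c^2$; everything preceding it is mechanical eigenspace bookkeeping once the $(1,1,0,0)$ weighting and the $\tau$-swap are in hand.
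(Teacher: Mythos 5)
Your proof is correct and complete: the weight recording $(1,1,0,0)$ for $\Gamma$, the observation that $\tau$ swaps the weight-$2$ and weight-$0$ graded pieces of quadrics (and the two eigenlines, and the two eigenspaces of linear forms, killing fixed points and invariant planes), the split $\mu=\pm 1$ on the weight-$1$ piece recovering $R$ from $\mu=-1$ and the family $\{ax_0x_3+bx_1x_2+c(x_0x_2+x_1x_3)=0\}$ from $\mu=1$, and the Gram determinant $(c^2-ab)^2$ with the factorization into two planes when $ab=c^2$ all check out. The paper's proof is simply ``Left to the reader,'' and your eigenweight bookkeeping is the intended routine verification; the one point deserving the care you flag --- that irreducibility coincides with smoothness here --- is secured either by your explicit factorization $(\alpha x_0+\beta x_1)(\gamma x_2+\delta x_3)$ at $ab=c^2$ or by noting that the block anti-diagonal Gram matrix has even rank, so rank-$3$ cones cannot occur in this family.
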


\begin{proof}
Left to the reader.
\end{proof}

For $a\in\mathbb{C}\cup\{\infty\}$, set $L_{a}=\{x_0=ax_1,x_3=ax_2\}\subset \mathbb P^3$; then $L_a$ is a $G$-invariant line lying on $R$.
Note that $L_{0}=L$ and $L_{\infty}=\ell_2$.

\begin{lemma}
\label{lemma:3-12-curves}
If $C\subset \mathbb P^3$ is a $G$-invariant irreducible curve, then $C=L_a$ for some $a\in\mathbb{C}\cup\{\infty\}$.
\end{lemma}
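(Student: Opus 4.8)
The plan is to mimic the proof of Lemma~\ref{lemma:2-22-curves}. Let $C\subset\mathbb{P}^3$ be a $G$-invariant irreducible curve. First I would dispose of the case where $\Gamma$ fixes $C$ pointwise: the pointwise $\Gamma$-fixed locus in $\mathbb{P}^3$ is the union of the two disjoint lines $\{x_2=x_3=0\}$ and $\{x_0=x_1=0\}$, so an irreducible $C$ contained in it would have to be one of these two lines; but $\tau\colon[x_0{:}x_1{:}x_2{:}x_3]\mapsto[x_3{:}x_2{:}x_1{:}x_0]$ interchanges them, so neither is $G$-invariant, a contradiction. Hence $\Gamma$ acts effectively on $C$, which forces $C\cong\mathbb{P}^1$, and the induced $\mathbb{C}^\ast$-action on $C$ has exactly two fixed points.

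Next I would exploit the involution $\tau$. If $\tau$ fixed $C$ pointwise, then $C$ would lie in the $\tau$-fixed locus $\{x_0=x_3,\,x_1=x_2\}\cup\{x_0=-x_3,\,x_1=-x_2\}$, again a union of two disjoint lines, and one checks directly that neither line is $\Gamma$-invariant; so $\tau|_C$ is a nontrivial involution of $\mathbb{P}^1$ and has exactly two fixed points. The key step is to produce a $\tau$-fixed point $P\in C$ that is \emph{not} $\Gamma$-fixed. Indeed, if both $\tau$-fixed points of $C$ were $\Gamma$-fixed, then (as $C$ has only two $\Gamma$-fixed points) they would be precisely those two points, and hence $G$-fixed points of $\mathbb{P}^3$; but Lemma~\ref{lemma:3-12-points} asserts that there are none. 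So such a $P$ exists, and since $P$ is not $\Gamma$-fixed its orbit closure $\overline{\mathrm{Orb}_\Gamma(P)}$ is an irreducible curve contained in the closed $\Gamma$-invariant set $C$, whence $C=\overline{\mathrm{Orb}_\Gamma(P)}$.

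It then remains to run $\Gamma$ over the points of the $\tau$-fixed locus and identify the resulting orbit closures. Writing a fixed point of $\tau$ as $[b:a:a:b]$ or $[b:a:-a:-b]$ and applying $g_\lambda\colon[x_0{:}x_1{:}x_2{:}x_3]\mapsto[\lambda x_0{:}\lambda x_1{:}x_2{:}x_3]$, a short computation shows that the orbit closure satisfies $x_0=\tfrac{b}{a}x_1$ and $x_3=\tfrac{b}{a}x_2$, i.e. it equals $L_{b/a}$ when $a\ne 0$ and equals $L_\infty=\ell_2$ when $a=0$. Thus $C=L_a$ for some $a\in\mathbb{C}\cup\{\infty\}$. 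I expect the only mildly delicate points to be the bookkeeping at the degenerate parameters $a\in\{0,\infty\}$ and the verification that the two families of $\tau$-fixed lines both feed into the single family $L_a$; the structural input that really drives the argument is the absence of $G$-fixed points from Lemma~\ref{lemma:3-12-points}, which is exactly what rules out a $G$-invariant curve whose $\tau$- and $\Gamma$-fixed points coincide.
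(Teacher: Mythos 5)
Your proposal is correct and takes essentially the same route as the paper: the paper's proof of this lemma simply says ``see the proof of Lemma~\ref{lemma:2-22-curves}'', and your argument is exactly that orbit-closure argument adapted to the new group action (dispose of the pointwise $\Gamma$- and $\tau$-fixed cases, find a $\tau$-fixed point $P$ not fixed by $\Gamma$, conclude $C=\overline{\mathrm{Orb}_\Gamma(P)}$, and identify the orbit closures of the $\tau$-fixed points $[b:a:a:b]$, $[b:a:-a:-b]$ as the lines $L_{b/a}$). Your appeal to Lemma~\ref{lemma:3-12-points} to exclude the case where both $\tau$-fixed points of $C$ are $\Gamma$-fixed is a correct filling-in of the one step the paper leaves implicit.
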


\begin{proof}
See the proof of Lemma~\ref{lemma:2-22-curves}.
\end{proof}

Denote by $\gamma\colon V\to \mathbb{P}^3$ the blowup of the lines $L$, $\ell_1$, $\ell_3$,
and by $\phi\colon\widetilde{X}\to V$ the blowup of the proper transform of the line $\ell_2$.

Let $\varphi\colon W\to \mathbb P^3$ be the blowup of the lines $L$ and $\ell_2$,
and $\delta\colon \overline{X}\to W$ the blowup of the proper transform of the disjoint lines $\ell_1$ and $\ell_3$.
Then we have a $G$-equivariant commutative diagram:
\begin{equation}
\label{equation:3-12-diagram}
\xymatrix{
\widetilde{X}\ar[rr]\ar[d]_{\phi} && X\ar[d]_{\pi} && \overline{X}\ar[ll]\ar[d]^{\delta}\\
V \ar[rr]_{\gamma} && \mathbb{P}^3 && W\ar[ll]^\varphi}
\end{equation}
where $\widetilde{X}\to X$ and $\overline{X}\to X$ are $G$-equivariant small resolutions of the 3-fold~$X$.

Let $E_L$, $E_1$, $E_2$, $E_3$ be the $\pi$-exceptional divisors that are mapped to $L$, $\ell_1$, $\ell_2$, $\ell_3$, respectively,
let $H_L$ be a general plane in $\mathbb{P}^3$ that contains $L$, let $H_2$ be a general plane in $\mathbb{P}^3$ that contains $\ell_2$,
let $H$ be a general plane in $\mathbb{P}^3$,
let $\overline{E}_L$, $\overline{E}_1$, $\overline{E}_2$, $\overline{E}_3$, $\overline{Q}$, $\overline{R}$,
$\overline{H}_{12}$, $\overline{H}_{23}$, $\overline{H}_L$, $\overline{H}_2$, $\overline{H}$
be the proper transforms on $\overline{X}$ of the surfaces
$E_L$, $E_1$, $E_2$, $E_3$, $Q$, $R$, $H_{12}$, $H_{23}$, $H_L$, $H_2$, $H$, respectively.
Then $\overline{H}$, $\overline{E}_L$, $\overline{E}_1$, $\overline{E}_2$, $\overline{E}_3$
generate  $\mathrm{Pic}(\overline{X})$, and their intersections can be described as follows:
$\overline{H}^3=1$, $\overline{E}_L^3=\overline{E}_2^3=-2$,
$\overline{E}_1^3=\overline{E}_3^3=-1$, $\overline{E}_L^2\cdot \overline{H}=\overline{E}_1^2\cdot \overline{H}=\overline{E}_2^2\cdot \overline{H}=\overline{E}_3^2\cdot \overline{H}=\overline{E}_2\cdot \overline{E}_3^2=\overline{E}_2\cdot \overline{E}_1^2=-1$,
and other triple intersections are zero.
Note that $-K_{\overline{X}}\sim 4\overline{H}-\overline{E}_L-\overline{E}_1-\overline{E}_2-\overline{E}_3$ and
\begin{align*}
\overline{Q}&\sim 2\overline{H}-\overline{E}_L-\overline{E}_1-\overline{E}_3, & \overline{H}_{12}&\sim \overline{H}-\overline{E}_1-\overline{E}_2, & \overline{H}_L&\sim \overline{H}-\overline{E}_L, \\
\overline{R}&\sim 2\overline{H}-\overline{E}_L-\overline{E}_2, & \overline{H}_{23}&\sim \overline{H}-\overline{E}_2-\overline{E}_3, & \overline{H}_2&\sim \overline{H}-\overline{E}_L.
\end{align*}
Note also that $\overline{E}_L$, $\overline{E}_2$, $\overline{E}_1+\overline{E}_3$,
$\overline{Q}$, $\overline{R}$, $\overline{H}_{12}+\overline{H}_{23}$ are $G$-invariant and $G$-irreducible.

\begin{lemma}
\label{lemma:3-12-Eff}
Let $S$ be a $G$-invariant prime divisor in $\overline{X}$.
If $S\ne\overline{E}_L$, then there are non-negative integers $a_1$, $a_2$, $a_3$, $a_4$, $a_5$ such that
$S\sim a_1\overline{E}_2+a_2\overline{Q}+a_3\overline{H}_L+a_4\overline{H}_2+a_5(\overline{H}_{12}+\overline{H}_{23})$.
\end{lemma}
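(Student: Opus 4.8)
The plan is to follow the template of Lemma~\ref{lemma:2-22-Eff}: reduce to the case where $S$ is the proper transform of a $G$-invariant surface in $\mathbb{P}^3$, express its class in the basis $\overline{H},\overline{E}_L,\overline{E}_1,\overline{E}_2,\overline{E}_3$, and then manufacture the non-negative coefficients $a_1,\dots,a_5$ by solving a small linear system whose solvability is forced by two intersection inequalities. First I would dispose of the exceptional cases. If $S$ is contracted by $\varphi\circ\delta\colon\overline{X}\to\mathbb{P}^3$, then $S\in\{\overline{E}_L,\overline{E}_1,\overline{E}_2,\overline{E}_3\}$; since $\tau$ interchanges $\ell_1$ and $\ell_3$, neither $\overline{E}_1$ nor $\overline{E}_3$ is $G$-invariant, so the only $G$-invariant possibilities are $\overline{E}_L$ (excluded) and $\overline{E}_2$, for which one takes $a_1=1$ and the remaining coefficients zero. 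I would likewise record that each of $\overline{Q}$, $\overline{R}\sim\overline{H}_L+\overline{H}_2$, $\overline{H}_L$, $\overline{H}_2$ is manifestly a non-negative combination of the listed classes, so I may assume $S$ is none of these.

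Under these assumptions $T:=(\varphi\circ\delta)(S)\subset\mathbb{P}^3$ is a $G$-invariant surface of some degree $d\ge 1$, and
\[
S\sim d\overline{H}-m_L\overline{E}_L-m_1\overline{E}_1-m_2\overline{E}_2-m_3\overline{E}_3,
\]
where $m_L,m_1,m_2,m_3\ge 0$ are the multiplicities of $T$ along $L,\ell_1,\ell_2,\ell_3$. Because $\tau$ swaps $\ell_1$ and $\ell_3$, the $G$-invariance of $T$ forces $m_1=m_3=:m$. The geometric heart of the argument is the choice of two test curves on the two $G$-invariant quadrics. On $\overline{R}\cong\mathbb{P}^1\times\mathbb{P}^1$ (the proper transform of $R$, which contains $L=L_0$ and $\ell_2=L_\infty$ but not $\ell_1,\ell_3$) I take a general ruling line $M$ of the ruling transverse to the $L_a$; it meets $L$ and $\ell_2$ once each and avoids $\ell_1,\ell_3$, so $\overline{M}\cdot\overline{H}=\overline{M}\cdot\overline{E}_L=\overline{M}\cdot\overline{E}_2=1$ and $\overline{M}\cdot\overline{E}_1=\overline{M}\cdot\overline{E}_3=0$, whence $0\le S\cdot\overline{M}=d-m_L-m_2$. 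On $\overline{Q}$, which carries the three pairwise disjoint lines $L,\ell_1,\ell_3$, I take a general ruling line $M'$ of the complementary ruling; it meets each of $L,\ell_1,\ell_3$ once and avoids $\ell_2$, giving $0\le S\cdot\overline{M}'=d-m_L-2m$. (These fail only for the finitely many rulings contained in $S$, which forces $S=\overline{R}$ or $S=\overline{Q}$, already set aside.)

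Finally I would solve the linear system. Expanding the target via $\overline{Q}\sim 2\overline{H}-\overline{E}_L-\overline{E}_1-\overline{E}_3$, $\overline{H}_L\sim\overline{H}-\overline{E}_L$, $\overline{H}_2\sim\overline{H}-\overline{E}_2$, and $\overline{H}_{12}+\overline{H}_{23}\sim 2\overline{H}-\overline{E}_1-2\overline{E}_2-\overline{E}_3$, and matching coefficients against the class of $S$, everything reduces to a single free parameter $a_2$, with
\[
a_5=m-a_2,\quad a_3=m_L-a_2,\quad a_4=d-m_L-2m+a_2,\quad a_1=d-m_L-m_2-a_2.
\]
All five coefficients are non-negative exactly when $a_2$ lies in the integer interval $[\max(0,\,m_L+2m-d),\ \min(m,\,m_L,\,d-m_L-m_2)]$, and the two inequalities $d\ge m_L+m_2$ and $d\ge m_L+2m$ from the previous step (the latter also yielding $d\ge m_L+m$ and $d\ge 2m$, and the sum yielding $2d\ge 2m_L+2m+m_2$) guarantee this interval is non-empty; any integer $a_2$ in it produces the desired expression. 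The one genuine obstacle is pinning down the intersection numbers of $M$ and $M'$ against the $\overline{E}_i$ correctly through the two-step blow-up $\overline{X}\to W\to\mathbb{P}^3$ — in particular verifying that the general rulings meet only the centres lying on their own quadric — since once these numbers are secured the positivity bookkeeping is entirely routine.
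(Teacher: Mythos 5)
Your proposal is correct and follows essentially the same route as the paper's proof: write $S\sim d\overline{H}-m_L\overline{E}_L-m\big(\overline{E}_1+\overline{E}_3\big)-m_2\overline{E}_2$, extract two inequalities by intersecting with proper transforms of well-chosen lines, and solve a small linear system for non-negative coefficients --- your only deviation is the second test curve (a ruling of $R$ meeting $L$ and $\ell_2$, giving $d\geqslant m_L+m_2$, where the paper instead uses a general line in $H_{12}$ through $H_{12}\cap L$, giving the stronger $d\geqslant m_L+m+m_2$), and your weaker pair still suffices since your interval for $a_2$ always contains $0$, so in fact $a_2=0$ works and $\overline{Q}$ is never needed in your version, whereas the paper's explicit choices use $a_2=\min(m,m_L)$. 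You have also, correctly, worked with $\overline{H}_2\sim\overline{H}-\overline{E}_2$ rather than the relation $\overline{H}_2\sim\overline{H}-\overline{E}_L$ misprinted in the paper's list of divisor classes, without which the paper's own coefficient bookkeeping would not check out.
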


\begin{proof}
We may assume that $S\ne\overline{E}_L$, $S\ne\overline{E}_2$ and $S\ne\overline{Q}$.
Then $\pi(S)$ is a $G$-invariant surface in $\mathbb{P}^3$ of degree $d\geqslant 1$,
so that $S\sim d\overline{H}-m\overline{E}_L-r(\overline{E}_1+\overline{E}_3)-s\overline{E}_2$
for some non-negative integers $m$, $r$, $s$.

Let $\ell$ be a general ruling of the quadric $Q\cong\mathbb{P}^1\times\mathbb{P}^1$
that intersects  $L$, $\ell_1$, $\ell_3$,
let $\overline{\ell}$ be its proper transform on $\overline{X}$.
Then $\overline{\ell}\not\subset S$, which gives $0\leqslant S\cdot\overline{\ell}=d-m-2r$.
Similarly, let $\ell_{12}$ be a general line in the plane $H_{12}$ that passes through the point $H_{12}\cap L$,
and let $\overline{\ell}_{12}$ be its proper transform on $\overline{X}$.
Then $\overline{\ell}_{12}\not\subset S$, which gives $0\leqslant S\cdot\overline{\ell}_{12}=d-m-r-s$.
Thus, if $m\geqslant r$, we can let $a_1=d-m-r-s$, $a_2=r$, $a_3=m-r$, $a_4=d-m-r$, $a_5=0$.
If $m<r$, we can let $a_1=d-2m-s$, $a_2=m$, $a_3=0$, $a_4=d-2r$, $a_5=r-m$.
\end{proof}

\begin{lemma}
\label{lemma:3-12-E2-EL-Q}
Let $S$ be a $G$-invariant prime divisor in $\overline{X}$ such that
$-K_{\overline{X}}\sim_{\mathbb{Q}}\lambda S+\Delta$
for some positive rational number $\lambda>\frac{4}{3}$ and some effective $\mathbb{Q}$-divisor $\Delta$ on the 3-fold $\overline{X}$.
Then $S=\overline{E}_2$, $S=\overline{E}_L$ or $S=\overline{Q}$.
\end{lemma}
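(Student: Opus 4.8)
The plan is to exploit that the hypothesis $\lambda>\tfrac43$ is extremely restrictive, forcing $S$ (once the two exceptional divisors are set aside) to be the proper transform of a quadric, and then to pin down \emph{which} quadric using the geometry of the blown-up lines. The only mechanism I need beyond the intersection numbers recorded above is the following: if $\Sigma$ is a curve moving in a family that covers $\overline{X}$, then its general member avoids $\mathrm{Supp}(\Delta)$, so $-K_{\overline{X}}\sim_{\mathbb{Q}}\lambda S+\Delta$ yields $(-K_{\overline{X}}-\lambda S)\cdot\Sigma=\Delta\cdot\Sigma\geqslant 0$, whence $\lambda\leqslant (-K_{\overline{X}}\cdot\Sigma)/(S\cdot\Sigma)$ whenever $S\cdot\Sigma>0$.

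First I would dispose of the exceptional divisors: if $S=\overline{E}_L$ or $S=\overline{E}_2$ there is nothing to prove. Otherwise $S$ is the proper transform of an irreducible $G$-invariant surface in $\mathbb{P}^3$ of some degree $d$, and since $G$ interchanges $\ell_1$ and $\ell_3$ the multiplicities of $\pi(S)$ along these two lines agree; thus $S\sim d\overline{H}-m\overline{E}_L-r(\overline{E}_1+\overline{E}_3)-s\overline{E}_2$ for non-negative integers $m,r,s$. By Lemma~\ref{lemma:3-12-points} there is no $G$-invariant plane, so $d\geqslant 2$. Taking $\Sigma$ to be the proper transform of a general line in $\mathbb{P}^3$ (which avoids the blown-up locus, so $-K_{\overline{X}}\cdot\Sigma=4$ and $S\cdot\Sigma=d$), the inequality above reads $\lambda\leqslant 4/d$; together with $\lambda>\tfrac43$ this forces $d=2$. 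Hence $S$ is the proper transform of a $G$-invariant \emph{quadric}, and $m,r,s\in\{0,1\}$.

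The crux is then to determine $(m,r,s)$ using the two pairs of \emph{disjoint} blown-up lines $\{\ell_1,\ell_3\}$ and $\{L,\ell_2\}$. Through a general point of $\mathbb{P}^3$ there is a unique transversal to each such pair, and these transversals sweep out $\mathbb{P}^3$; their proper transforms $\Sigma_{13}$ and $\Sigma_{L2}$ therefore move in families covering $\overline{X}$. A general member of either family is a line meeting its two chosen lines once and avoiding the other two, which gives $-K_{\overline{X}}\cdot\Sigma_{13}=-K_{\overline{X}}\cdot\Sigma_{L2}=2$, $S\cdot\Sigma_{13}=2-2r$ and $S\cdot\Sigma_{L2}=2-m-s$. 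If $r=0$ then $\lambda\leqslant 2/2=1$, a contradiction, so $r=1$ and the quadric contains $\ell_1$ and $\ell_3$. By Lemma~\ref{lemma:3-12-points} it is then of the form $\{ax_0x_3+bx_1x_2+c(x_0x_2+x_1x_3)=0\}$ with $a+c=0$; were it to also contain $\ell_2$ we would have $a=0$, hence $c=0$ and $ab=c^2$, contradicting irreducibility. Thus $s=0$, so $S\cdot\Sigma_{L2}=2-m$, and $m=0$ would again give $\lambda\leqslant 1$. Hence $m=1$, the quadric contains $L$ (so $b=0$), and the conditions $a+c=0$, $b=0$ single out exactly $Q$. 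This proves $S=\overline{Q}$.

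The step I expect to require the most care is the verification that $\Sigma_{13}$ and $\Sigma_{L2}$ genuinely move in families covering $\overline{X}$ rather than merely $\mathbb{P}^3$, so that a general member avoids $\mathrm{Supp}(\Delta)$ and the displayed inequalities are valid; this is precisely where the disjointness of the two pairs of lines enters, and it is also what makes the intersection numbers $-K_{\overline{X}}\cdot\Sigma=2$ small enough to contradict $\lambda>\tfrac43$ in the unwanted cases. By contrast, the general line $\Sigma$ alone only detects the degree, which is why the transversal families, combined with the classification of $G$-invariant quadrics in Lemma~\ref{lemma:3-12-points}, are indispensable. The companion Lemma~\ref{lemma:3-12-Eff} is not needed here; its role is complementary, ensuring that $\overline{E}_2,\overline{E}_L,\overline{Q}$ suffice as generators in Remark~\ref{remark:Kento-divisorial}.
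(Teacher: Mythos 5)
Your proof is correct and takes essentially the same route as the paper: both write $S\sim 2\overline{H}-m\overline{E}_L-r(\overline{E}_1+\overline{E}_3)-s\overline{E}_2$ after using a general line to force $d=2$, both intersect $\Delta$ with covering families of transversal lines to the pairs of blown-up lines to constrain $(m,r,s)$, and both finish with the classification of $G$-invariant quadrics in Lemma~\ref{lemma:3-12-points} to pin down $\pi(S)=Q$. The only (immaterial) difference is the order of the final steps: you deduce $s=0$ geometrically from $a+c=0$, $a\ne 0$ before forcing $m=1$ numerically, whereas the paper first extracts $r\ne 0$ and $(m,s)\ne(0,0)$ from line classes and then rules out $\ell_2\subset\pi(S)$.
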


\begin{proof}
Suppose that $S\ne\overline{E}_2$ and $S\ne\overline{E}_L$.
Let us show that $S=\overline{Q}$.
Since $S\ne\overline{E}_1+\overline{E}_3$, we see that $\pi(S)$ is a $G$-invariant irreducible surface of degree $d\geqslant 2$,
because $\mathbb{P}^3$ does not contain $G$-invariant planes by Lemma~\ref{lemma:3-12-points}. Then
$S\sim d\overline{H}-m\overline{E}_L-r(\overline{E}_1+\overline{E}_3)-s\overline{E}_2$
for some non-negative integers $m$, $r$, $s$.
Then $4\geqslant \lambda d>\frac{4}{3}d$, so that $d=2$ and
$$
\Delta\sim_{\mathbb{Q}}  (4-2\lambda)\overline{H}+(m\lambda-1)\overline{E}_L+(s\lambda-1)\overline{E}_2+(r\lambda-1)(\overline{E}_1+\overline{E}_3).
$$

Let $\ell$ be a general line in $\mathbb{P}^3$ that intersects the lines $\ell_1$ and $\ell_2$,
and let $\overline{\ell}$ be its proper transform on the 3-fold $\overline{X}$.
Then $\overline{\ell}\not\subset\mathrm{Supp}(\Delta)$, so that
$0\leqslant \Delta\cdot\overline{\ell}=2-2\lambda+r\lambda$,
which implies that $r\ne 0$.
Similarly, intersecting $\Delta$ with the proper transform of a general line in $\mathbb{P}^3$ that intersects $L$ and $\ell_2$,
we see that $(m,s)\ne (0,0)$.

Since $r\ne 0$, the quadric $\pi(S)$ contains $\ell_1$ and $\ell_3$.
Hence, using Lemma~\ref{lemma:3-12-points}, we get
$$
\pi(S)=\big\{ax_0x_3+bx_1x_2-a(x_0x_2+x_1x_3)=0\big\}
$$
for some $[a:b]\in\mathbb{P}^1$ such that $[a:b]\ne [0:1]$ and $[a:b]\ne [1:1]$.
This gives $\ell_2\not\in\pi(S)$, so that $s=0$. Then $m\ne 0$, so that $L\subset \pi(S)$.
Then $[a:b]=[1:0]$ and $\overline{S}=\overline{Q}$.
\end{proof}

We now turn to the proof that $X$ is K-polystable.
Let $\mathbf{F}$ be a $G$-invariant prime divisor over $X$, let $Z$
and $\overline{Z}$ be its centres on $X$ and $Y=\overline{X}$, respectively.
Then it follows from Lemmas~\ref{lemma:3-12-points} and \ref{lemma:3-12-curves} that one of the following four cases holds:
\begin{enumerate}
\item $Z$ is a $G$-invariant irreducible surface,
\item $Z$ is a $G$-invariant irreducible curve in the surface $E_L$,
\item $Z$ is a $G$-invariant irreducible curve in the surface $E_2$,
\item $\pi(Z)=L_a$ for some $a\in\mathbb{C}^\ast$.
\end{enumerate}
By \cite[Corollary~4.14]{Zhuang}, to prove that $X$ is K-polystable, it is enough to show that~\mbox{$\beta(\mathbf{F})>0$}.
We use the assumptions and notations introduced in Section~\ref{section:Abban-Zhuang},
we first consider the case when $Z$ is a surface.

\begin{lemma}
\label{lemma:3-12-divisorial}
Let $S$ be a $G$-invariant prime divisor in $X$.
Then $\beta(S)>0$.
\end{lemma}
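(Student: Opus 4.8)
The plan is to follow the template of Lemma~\ref{lemma:2-22-divisorial}: combine the effectivity and threshold results already available with the Abban--Zhuang estimate, reducing the statement to finitely many explicit $S_Y$-computations. Concretely, I would work on $Y=\overline{X}$ and invoke the $\lambda>\frac{4}{3}$ form of Remark~\ref{remark:Kento-divisorial} with the three effective divisors $S_1=\overline{E}_2$, $S_2=\overline{E}_L$, $S_3=\overline{Q}$. By Lemma~\ref{lemma:3-12-E2-EL-Q} these are precisely the $G$-invariant prime divisors $D$ for which $-K_{\overline{X}}\sim_{\mathbb{Q}}\lambda D+\Delta$ can hold with $\lambda>\frac{4}{3}$ and $\Delta$ effective; every other $G$-invariant prime divisor $D$ therefore has pseudoeffective threshold $\tau(D)\le\frac{4}{3}$ against $-K_{\overline{X}}$, so \eqref{equation:KentoKyoto} already forces $S_{\overline{X}}(D)\le 1$ and the Fujita refinement quoted in the remark upgrades this to $\beta(D)>0$. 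Since each $S_i$ is trivially an integral combination of the $S_j$, the hypotheses of Remark~\ref{remark:Kento-divisorial} are met once I verify $S_{\overline{X}}(\overline{E}_2)<1$, $S_{\overline{X}}(\overline{E}_L)<1$ and $S_{\overline{X}}(\overline{Q})<1$.

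Each of these three numbers I would compute by the Zariski-decomposition recipe of Section~\ref{section:Abban-Zhuang}, using the intersection table together with the linear equivalences $\overline{Q}\sim 2\overline{H}-\overline{E}_L-\overline{E}_1-\overline{E}_3$, $\overline{R}\sim 2\overline{H}-\overline{E}_L-\overline{E}_2$, $\overline{H}_L\sim\overline{H}-\overline{E}_L$, and the key identity $-K_{\overline{X}}\sim\overline{Q}+\overline{R}+\overline{E}_L$. For $\mathscr{S}=\overline{Q}$ this identity gives $-K_{\overline{X}}-u\overline{Q}\sim_{\mathbb{R}}(1-u)\overline{Q}+\overline{R}+\overline{E}_L$, which exhibits the first segment of the decomposition and locates the bigness threshold $\tau$; for $\mathscr{S}=\overline{E}_L$ and $\mathscr{S}=\overline{E}_2$ I would write $-K_{\overline{X}}-u\mathscr{S}$ analogously as a nonnegative combination of $\overline{Q}$, $\overline{R}$, the $\overline{H}$-classes and the exceptional surfaces. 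On each subinterval of $[0,\tau]$ the positive part $P(u)$ is piecewise linear in $u$, so $(P(u))^3$ is a piecewise cubic polynomial read off from the intersection numbers, and $S_{\overline{X}}(\mathscr{S})=\frac{1}{28}\int_0^{\tau}(P(u))^3\,du$, where $(-K_{\overline{X}})^3=28$. I expect each integral to fall strictly below $1$, exactly as the three analogous computations did in the proof of Lemma~\ref{lemma:2-22-divisorial}.

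The main obstacle is the same as in Family~\ref{example:2-22}: locating the walls of the Mori chamber decomposition, that is, determining for each $u$ which strict transforms and exceptional surfaces enter the negative part $N(u)$, and checking that a genuine Zariski decomposition exists on the chosen small resolution. Because $\widetilde{X}\dasharrow\overline{X}$ differs by the two Atiyah flops coming from $\ell_2\cap\ell_1$ and $\ell_2\cap\ell_3$, the decomposition of $-K_Y-u\mathscr{S}$ need not exist on $\overline{X}$ for the entire range of $u$; in that event I would pass to $\widetilde{X}$ for that particular $\mathscr{S}$ (just as $\overline{E}$ was handled on $\overline{X}$ while $\widetilde{Q}$ was handled on $\widetilde{X}$ in Lemma~\ref{lemma:2-22-divisorial}), using $-K_Y\sim\eta^*(-K_X)$ to keep $S_Y(\mathscr{S})$ unchanged. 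Once the chamber structure and the correct model are pinned down, the remaining integrations are routine and purely mechanical.
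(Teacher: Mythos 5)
Your proposal follows the paper's proof essentially verbatim: the paper likewise combines Remark~\ref{remark:Kento-divisorial} with Lemma~\ref{lemma:3-12-E2-EL-Q} to reduce to the three divisors $\overline{E}_2$, $\overline{E}_L$, $\overline{Q}$, and then computes their Zariski decompositions on $Y=\overline{X}$ (with $\tau=2$, $\tau=\tfrac{3}{2}$, $\tau=\tfrac{3}{2}$ respectively), obtaining $S_Y(\overline{E}_2)=\tfrac{51}{56}$, $S_Y(\overline{E}_L)=\tfrac{37}{56}$ and $S_Y(\overline{Q})=\tfrac{129}{224}$, all strictly less than $1$. Your contingency of passing to $\widetilde{X}$ turns out to be unnecessary --- all three decompositions exist on $\overline{X}$ --- but this does not affect the correctness of the plan.
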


\begin{proof}
By Remark~\ref{remark:Kento-divisorial} and Lemma \ref{lemma:3-12-E2-EL-Q},
it is enough to show that $\beta(\overline{E}_2)$, $\beta(\overline{E}_L)$, $\beta(\overline{Q})$ are positive.
Observe that $\beta(\overline{E}_L)>0$ follows from the proof of \cite[Lemma 4.2]{Denisova}.
Nevertheless, let us compute $\beta(\overline{E}_L)$.
We let $\mathscr{S}=\overline{E}_L$.
Then
$$
-K_{Y}-u\mathscr{S}\sim_{\mathbb{R}}
4\overline{H}-(1+u)\mathscr{S}-\overline{E}_1-\overline{E}_2-\overline{E}_3\sim_{\mathbb{R}}
\left(\frac{3}{2}-u\right)\mathscr{S}+
\frac{1}{2}\big(\overline{Q}+\overline{H}_{12}+\overline{H}_{23}\big)+2\overline{H}_L,
$$
Thus, it follows from \eqref{equation:3-12-P1-P2} that $\tau=\frac{3}{2}$. Moreover, we have
$$
P(u)\sim_{\mathbb{R}}\begin{cases}
\left(\frac{3}{2}-u\right)\mathscr{S}+
\frac{1}{2}\big(\overline{Q}+\overline{H}_{12}+\overline{H}_{23}\big)+2\overline{H}_L& \quad \text{if } 0\leqslant u\leqslant 1,\\
\left(\frac{3}{2}-u\right)\big(\mathscr{S}+\overline{Q}+\overline{H}_{12}+\overline{H}_{23}\big)+2\overline{H}_L& \quad \text{if } 1\leqslant u\leqslant \frac{3}{2},
\end{cases}
$$
and
$$
N(u)=\begin{cases}
	0& \quad \text{if } 0\leqslant u\leqslant 1,\\
	(u-1)\big(\overline{Q}+\overline{H}_{12}+\overline{H}_{23}\big)& \quad \text{if } 1\leqslant u\leqslant\frac{3}{2}.
\end{cases}
$$
Now, by integrating $(P(u))^3$ we get $\beta(\overline{E}_L)=1-S_{Y}(\overline{E}_L)=1-\frac{37}{56}=\frac{19}{56}$.

Now, we deal with $\overline{Q}$.
Set $\mathscr{S}=\overline{Q}$.
Since $\overline{Q}\sim 2\overline{H}-\overline{E}_1-\overline{E}_L-\overline{E}_3$, we~have
$$
-K_{Y}-u\mathscr{S}\sim_{\mathbb{R}}
\left(\frac{3}{2}-u\right)\mathscr{S}+\frac{1}{2}\big(\overline{E}_L+\overline{E}_1+\overline{E}_2\big)+\frac{1}{2}\overline{H}_2,
$$
so that $\tau=\frac{3}{2}$. Moreover, if $0\leqslant u\leqslant 1$, then $N(u)=0$.
Similarly, if $1\leqslant u\leqslant\frac{3}{2}$, then $N(u)=(u-1)(\overline{E}_L+\overline{E}_1+\overline{E}_2)$.
Then
$$
P(u)\sim_{\mathbb{R}}\begin{cases}
\left(\frac{3}{2}-u\right)\mathscr{S}+\frac{1}{2}\big(\overline{E}_L+\overline{E}_1+\overline{E}_2\big)+\frac{1}{2}\overline{H}_2& \quad \text{if } 0\leqslant u\leqslant 1,\\
\left(\frac{3}{2}-u\right)\big(\mathscr{S}+\overline{E}_L+\overline{E}_1+\overline{E}_2\big)+\frac{1}{2}\overline{H}_2& \quad \text{if } 1\leqslant u\leqslant \frac{3}{2}.
\end{cases}
$$
Now, by integrating we get $\beta(\overline{Q})=1-S_{Y}(\overline{Q})=1-\frac{129}{224}>0$.

Finally, we proceed to study $\overline{E}_2$.
Let $\mathscr{S}=\overline{E}_2$. Then $\tau=2$, since
$$
-K_{Y}-u\mathscr{S}\sim_{\mathbb{R}}(2-u)\mathscr{S}+\frac{3}{2}\big(\overline{H}_{12}+\overline{H}_{23}\big)+\frac{1}{2}\big(\overline{E}_{1}+\overline{E}_{3}\big).
$$
Moreover, if $u\in[0,1]$, then $N(u)=0$.
If $u\in[1,2]$, then $N(u)=(u-1)(\overline{H}_{12}+\overline{H}_{23})$, so
$$
P(u)\sim_{\mathbb{R}}\begin{cases}
(2-u)\mathscr{S}+\frac{3}{2}\big(\overline{H}_{12}+\overline{H}_{23}\big)+\frac{1}{2}\big(\overline{E}_{1}+\overline{E}_{3}\big) &\quad \text{if } 0\leqslant u\leqslant 1, \\
(2-u)\mathscr{S}+\frac{5-u}{2}\big(\overline{H}_{12}+\overline{H}_{23}\big)+\frac{1}{2}\big(\overline{E}_{1}+\overline{E}_{3}\big) &\quad \text{if } 1\leqslant u\leqslant 2.
\end{cases}
$$
Integrating, leads to $S_{Y}(\overline{E}_2)=\frac{51}{56}$, so that $\beta_{\overline{X}}(\overline{E}_2)>0$.
\end{proof}
Let us now show that $\beta(\mathbf{F})>0$ when the centre of $\mathbf{F}$ on $X$ is small.

\begin{lemma}[{\cite[Lemma 4.1]{Denisova}}]
\label{lemma:3-12-EL}
Suppose that $Z$ is a curve in $E_L$.
Then $\beta(\mathbf{F})>0$.
\end{lemma}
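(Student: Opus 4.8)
The plan is to apply the Abban--Zhuang estimate \eqref{equation:Kento-curve} on the model $Y=\overline{X}$ with $\mathscr{S}=\overline{E}_L$. The advantage of this choice is that the Zariski decomposition of $-K_Y-u\mathscr{S}$ and the value $S_Y(\overline{E}_L)=\frac{37}{56}<1$ have already been recorded in the proof of Lemma~\ref{lemma:3-12-divisorial}: there $\tau=\frac{3}{2}$, the negative part is trivial on $[0,1]$, and $N(u)=(u-1)(\overline{Q}+\overline{H}_{12}+\overline{H}_{23})$ on $[1,\frac{3}{2}]$. Since $\overline{E}_L$ is $G$-invariant and $S_Y(\mathscr{S})<1$, it remains only to bound $S(W^{\mathscr{S}}_{\bullet,\bullet};\mathscr{C})$ for every admissible centre $\mathscr{C}=\overline{Z}\subset\overline{E}_L$.

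First I would classify the $G$-invariant irreducible curves in $\overline{E}_L$. The surface $\overline{E}_L$ is the exceptional divisor over the line $L$, hence a $\mathbb{P}^1$-bundle over $L\cong\mathbb{P}^1$, on which $G\cong\mathbb{C}^\ast\rtimes\mumu_2$ acts with $\Gamma=\mathbb{C}^\ast$ fixing the two points $[0:1:0:0]$ and $[0:0:1:0]$ of $L$ that $\tau$ interchanges. Exactly as in Lemma~\ref{lemma:2-22-curves-E}, the two fibres over these points are swapped by $\tau$ and so are not $G$-invariant individually; the only $G$-invariant irreducible curves are therefore the sections cut out by the $G$-invariant quadrics through $L$ classified in Lemma~\ref{lemma:3-12-points}, in particular $\overline{Q}\cap\overline{E}_L$ and $\overline{R}\cap\overline{E}_L$. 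Because $\ell_1,\ell_2,\ell_3$ are all disjoint from $L$, the classes $\overline{E}_1,\overline{E}_2,\overline{E}_3$ restrict to zero on $\overline{E}_L$, so all these sections lie in the single class $(2\overline{H}-\overline{E}_L)|_{\overline{E}_L}$; the only relevant distinction is whether $\mathscr{C}\subset\mathrm{Supp}(N(u))$, which happens precisely for $\mathscr{C}=\overline{Q}\cap\overline{E}_L$ and not for $\overline{R}\cap\overline{E}_L$ or a general section.

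Next, for each candidate I would restrict the positive parts $P(u)$ of Lemma~\ref{lemma:3-12-divisorial} to $\mathscr{S}=\overline{E}_L$, expressing $P(u)|_{\mathscr{S}}$ and $N(u)|_{\mathscr{S}}$ in the basis of $\mathrm{Pic}(\overline{E}_L)$ given by a fibre and a section. Note that $\overline{H}_{12}\cap\overline{E}_L$ and $\overline{H}_{23}\cap\overline{E}_L$ are exactly the two swapped fibres, so on $[1,\frac{3}{2}]$ the divisor $N(u)|_{\mathscr{S}}$ equals $(u-1)$ times the sum of $\overline{Q}\cap\overline{E}_L$ and these two fibres. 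From here one reads off the threshold $t(u)$, the secondary Zariski decomposition $P(u)|_{\mathscr{S}}-v\mathscr{C}=P(u,v)+N(u,v)$, and the coefficient $d(u)=\mathrm{ord}_{\mathscr{C}}(N(u)|_{\mathscr{S}})$, which equals $u-1$ when $\mathscr{C}=\overline{Q}\cap\overline{E}_L$ and vanishes otherwise. Integrating $(P(u,v))^2$ together with the boundary term $d(u)(P(u,0))^2$ produces $S(W^{\mathscr{S}}_{\bullet,\bullet};\mathscr{C})$, which I expect to be $\leqslant 1$ in every case; combined with $S_Y(\mathscr{S})<1$, the inequality \eqref{equation:Kento-curve}, or Remark~\ref{remark:Kento-curve-strict} in the borderline situation, then yields $\beta(\mathbf{F})>0$.

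The main obstacle is the bookkeeping in the second chamber $u\in[1,\frac{3}{2}]$ for $\mathscr{C}=\overline{Q}\cap\overline{E}_L$, where $\mathscr{C}$ lies in the support of $N(u)|_{\mathscr{S}}$ and contributes the nonzero term $d(u)(P(u,0))^2$: one must track the restriction and the secondary decomposition on $\overline{E}_L$ carefully, precisely as in the twin computation for Family~\ref{example:2-22} carried out in Lemma~\ref{lemma:2-22-curve-in-HC}. A secondary concern is that the final value may land exactly on $1$, in which case $\beta(\mathbf{F})>0$ is recovered only through Remark~\ref{remark:Kento-curve-strict}, whose hypotheses ($\mathscr{C}=\overline{Z}$ and $S_Y(\mathscr{S})<1$) are satisfied here. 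Since the statement is quoted from \cite[Lemma~4.1]{Denisova}, these calculations are available there and confirm $\beta(\mathbf{F})>0$ in all cases.
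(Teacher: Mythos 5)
Your proposal follows the paper's own proof essentially step for step: the paper likewise works on $Y=\overline{X}$ with $\mathscr{S}=\overline{E}_L$, reduces to the two $G$-invariant curves $C_Q=\overline{Q}\cap\overline{E}_L$ and $C_R=\overline{R}\cap\overline{E}_L$ (both of class $\mathbf{s}+\mathbf{l}$), reuses $S_Y(\overline{E}_L)=\frac{37}{56}$ and the Zariski decomposition from Lemma~\ref{lemma:3-12-divisorial}, and records $N(u)\vert_{\mathscr{S}}=(u-1)\big(C_Q+\mathbf{l}_{12}+\mathbf{l}_{23}\big)$ on $[1,\frac{3}{2}]$, so that $d(u)=u-1$ precisely for $\mathscr{C}=C_Q$ and $d(u)=0$ for $C_R$, exactly as you predict. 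The only difference is that you defer the final integrals to \cite{Denisova}, whereas the paper evaluates them and obtains $S(W^{\mathscr{S}}_{\bullet,\bullet};\mathscr{C})=\frac{159}{224}$ for $C_Q$ and $\frac{151}{224}$ for $C_R$, both strictly less than $1$, so \eqref{equation:Kento-curve} alone concludes and the borderline case of Remark~\ref{remark:Kento-curve-strict} never arises.
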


\begin{proof}
Note that $\overline{E}_L\cong\mathbb{P}^1\times\mathbb{P}^1$.
Let $\mathbf{s}$ be a section of the natural projection $\overline{E}_L\to L$ such that $\mathbf{s}^2=0$,
and let $\mathbf{l}$ be a fibre of this projection. Then
$\overline{E}_L\vert_{\overline{E}_L}\sim -\mathbf{s}+\mathbf{l}$
and $\overline{H}\vert_{\overline{E}_L}\sim \mathbf{l}$.

Set $C_Q=\overline{Q}\cap\overline{E}_L$ and $C_R=\overline{R}\cap\overline{E}_L$.
Then $C_Q$ and $C_R$ are smooth irreducible $G$-invariant curves.
Furthermore, these are the only $G$-invariant irreducible curves in the surface $\overline{E}_L$.
Therefore, we conclude that either $\overline{Z}=C_Q$ or $\overline{Z}=C_R$.
Note that $C_Q\sim C_R\sim \mathbf{s}+\mathbf{l}$.

We let $\mathscr{S}=\overline{E}_L$, $\mathscr{C}=\overline{Z}$.
Then $S_{Y}(\mathscr{S})=\frac{37}{56}<1$, see the proof of Lemma~\ref{lemma:3-12-divisorial}.

Let us compute $S(W^\mathscr{S}_{\bullet,\bullet};\mathscr{C})$.
We recall from from the proof of Lemma~\ref{lemma:3-12-divisorial} that $\tau=\frac{3}{2}$.
Moreover, it also follows from the proof of Lemma~\ref{lemma:3-12-divisorial} that
$$
P(u)\big\vert_{\mathscr{S}}-v\mathscr{C}\sim_{\mathbb{R}}\begin{cases}
(1+u-v)\mathbf{s}+(3-u-v)\mathbf{l} &\quad \text{if } 0\leqslant u\leqslant 1, \\
(2-v)\mathbf{s}+(6-4u-v)\mathbf{l} &\quad \text{if } 1\leqslant u\leqslant \frac{3}{2},
\end{cases}
$$
and
$$
N(u)\big\vert_{\mathscr{S}}=\begin{cases}
0 &\quad \text{if } 0\leqslant u\leqslant 1, \\
(u-1)\big(C_Q+\mathbf{l}_{12}+\mathbf{l}_{23}\big) &\quad \text{if } 1\leqslant u\leqslant \frac{3}{2},
\end{cases}
$$
where $\mathbf{l}_{12}=\overline{H}_{12}\cap\overline{E}_L$ and $\mathbf{l}_{23}=\overline{H}_{23}\cap\overline{E}_L$.
Note that $\mathbf{l}_{12}$ and $\mathbf{l}_{13}$ are fibres of the natural projection $\overline{E}_L\to L$ over the points $L\cap H_{12}$ and $L\cap H_{13}$, respectively.

We have $P(u,v)\sim_{\mathbb{R}}P(u)\vert_{\mathscr{S}}-v\mathscr{C}$ and $N(u,v)=0$ for $v\in[0,t(u)]$, where
$$
t(u)=\begin{cases}
1+u& \quad \text{if } 0\leqslant u \leqslant 1,\\
6-4u & \quad \text{if } 1\leqslant u \leqslant \frac{3}{2}.
\end{cases}
$$
Thus, if $\mathscr{C}=C_Q$, then
\begin{multline*}
\quad\quad\quad\quad S(W^\mathscr{S}_{\bullet,\bullet};\mathscr{C})=
\frac{3}{28}\int\limits_{1}^{\frac{3}{2}}(u-1)(24-16u)dvdu+\\
+\frac{3}{28}\int\limits_{0}^1\int\limits_{0}^{1+u}2(3-u-v)(1+u-v)dvdu+\frac{3}{28}\int\limits_{1}^{\frac{3}{2}}\int\limits_{0}^{6-4u}2(2-v)(6-4u-v)dvdu=\frac{159}{224}.
\end{multline*}
Similarly, if $\mathscr{C}=C_R$, then $S(W^\mathscr{S}_{\bullet,\bullet};\mathscr{C})=\frac{151}{224}$.
So, it follows from \eqref{equation:Kento-curve} that $\beta(\mathbf{F})>0$.
\end{proof}

We now study $G$-invariant irreducible curves lying on $\overline{E}_2$.

\begin{lemma}
\label{Z not in E2}
Suppose that $\pi(Z)=L_{\infty}=\ell_2$; then $\beta(\mathbf{F})>0$.
\end{lemma}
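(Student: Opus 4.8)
The plan is to run the Abban--Zhuang machinery of Section~\ref{section:Abban-Zhuang} with a flag $\mathscr{C}\subset\mathscr{S}$ on $Y=\overline{X}$, recycling the Zariski decomposition already produced for $\overline{E}_2$ in the proof of Lemma~\ref{lemma:3-12-divisorial}. First I would pin down the centre. Since $\pi(Z)=\ell_2$, the curve $\overline{Z}$ dominates $\ell_2$, so it is a section of the natural projection $\overline{E}_2\to\ell_2$; the only $G$-invariant irreducible \emph{vertical} curves (the fibres over the two $G$-fixed points of $\ell_2$, and the curves $\overline{E}_1\cap\overline{E}_2$, $\overline{E}_3\cap\overline{E}_2$) are interchanged in pairs by $\tau$, hence none is individually $G$-invariant. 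I would then describe $\overline{E}_2$ explicitly: it is the exceptional quadric $E_2\cong\mathbb{P}^1\times\mathbb{P}^1$ over $\ell_2$ blown up at the two points $p_1,p_3$ where the strict transforms of $\ell_1,\ell_3$ meet it, which lie on distinct fibres over the two $G$-fixed points and are swapped by $\tau$. A short computation with the $\Gamma$-weights on the normal directions $x_1,x_2$ of $\ell_2$ shows that the $G$-invariant sections are exactly $\overline{R}\cap\overline{E}_2$ and $\overline{R}^\prime\cap\overline{E}_2$, where $R=\{x_0x_2=x_1x_3\}$ and $R^\prime=\{x_0x_2+x_1x_3=0\}$ are the two smooth $G$-invariant quadrics of Lemma~\ref{lemma:3-12-points} that contain $\ell_2$ and avoid $p_1,p_3$ (the sections cut out by $H_{12}$ and $H_{23}$ are $\tau$-conjugate, hence not individually invariant). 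Thus $\overline{Z}\in\{\overline{R}\cap\overline{E}_2,\ \overline{R}^\prime\cap\overline{E}_2\}$.

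Next I would set $Y=\overline{X}$, $\mathscr{S}=\overline{E}_2$ and $\mathscr{C}=\overline{Z}$. From the proof of Lemma~\ref{lemma:3-12-divisorial} I already have $\tau(\mathscr{S})=2$, $S_Y(\mathscr{S})=\tfrac{51}{56}<1$, and the outer Zariski decomposition of $-K_Y-u\overline{E}_2$, namely $N(u)=0$ on $[0,1]$ and $N(u)=(u-1)(\overline{H}_{12}+\overline{H}_{23})$ on $[1,2]$, with the stated positive part. The first genuine task is to restrict $P(u)$ to $\mathscr{S}$ and to expand $P(u)\vert_{\mathscr{S}}$ in the basis of $\mathrm{Pic}(\overline{E}_2)$ given by a section class, a fibre class, and the two exceptional classes of $p_1,p_3$; here the contribution $(\overline{H}_{12}+\overline{H}_{23})\vert_{\mathscr{S}}$ for $u\in[1,2]$ decomposes into the two $\tau$-conjugate sections together with fibre and exceptional corrections, and fixing these intersection numbers is what the whole computation rests on.

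Having $P(u)\vert_{\mathscr{S}}$, I would, for each $u\in[0,2]$, determine $t(u)$ and the inner Zariski decomposition $P(u)\vert_{\mathscr{S}}-v\mathscr{C}\sim_{\mathbb{R}}P(u,v)+N(u,v)$, exactly as in the proof of Lemma~\ref{lemma:3-12-EL}, running the two cases $\mathscr{C}=\overline{R}\cap\overline{E}_2$ and $\mathscr{C}=\overline{R}^\prime\cap\overline{E}_2$ in parallel. Since $\mathscr{C}$ is none of the components of $N(u)\vert_{\mathscr{S}}$ (those being the sections $\overline{H}_{12}\cap\overline{E}_2$ and $\overline{H}_{23}\cap\overline{E}_2$), one has $d(u)=0$ throughout, so the first integral in $S\big(W^{\mathscr{S}}_{\bullet,\bullet};\mathscr{C}\big)$ vanishes and only the double integral of $\big(P(u,v)\big)^2$ survives. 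Integrating should produce a value $\le 1$ in both cases, whence $\beta(\mathbf{F})>0$ by \eqref{equation:Kento-curve} (or by Remark~\ref{remark:Kento-curve-strict} in case of equality), using $S_Y(\mathscr{S})<1$. The main obstacle I anticipate is bookkeeping rather than conceptual: correctly splitting $N(u)\vert_{\mathscr{S}}$ into its section, fibre and exceptional parts and tracking the chamber walls of $v\mapsto P(u)\vert_{\mathscr{S}}-v\mathscr{C}$, since the two candidate sections meet the $\tau$-conjugate pair $(\overline{H}_{12}+\overline{H}_{23})\vert_{\mathscr{S}}$ and the blown-up points $p_1,p_3$ differently, and these differences must be accounted for to see that both invariants stay within the required bound.
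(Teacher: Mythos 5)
Your proposal follows essentially the same route as the paper: the same flag $\mathscr{C}\subset\mathscr{S}=\overline{E}_2$ on $Y=\overline{X}$, the same identification of the only $G$-invariant irreducible curves in $\overline{E}_2$ as the restrictions of the two $G$-invariant quadrics $\{x_0x_2\pm x_1x_3=0\}$ (your $R$ and $R^\prime$; the paper calls the latter $S$), the same reuse of the Zariski decomposition from Lemma~\ref{lemma:3-12-divisorial}, and the same observation that $\mathscr{C}\not\subset\mathrm{Supp}(N(u)\vert_{\mathscr{S}})$ so $d(u)\equiv 0$. The only simplification you miss is that $C_S\sim C_R$ on the sextic del Pezzo surface $\overline{E}_2$ and both avoid the blown-up points and the negative part, so a single chamber computation handles both cases at once, giving $S\big(W^{\mathscr{S}}_{\bullet,\bullet};\mathscr{C}\big)=\frac{9}{28}$, comfortably below $1$.
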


\begin{proof}
Observe that $\overline{Z}\subset\overline{E}_2$.
We will see soon that $\overline{Z}$ is a smooth $G$-irreducible curve,
so we let $\mathscr{S}=\overline{E}_2$, $\mathscr{S}=\overline{Z}$.
Then $S_Y(\mathscr{S})=\frac{51}{56}$ (see the proof of Lemma~\ref{lemma:3-12-divisorial}).

As in Lemma~\ref{lemma:3-12-points}, let $S=\{x_0x_2+x_1x_3=0\}$,
and let $\overline{S}$ be its proper transform on $\overline{X}$.
Set $C_S=\overline{S}\vert_{\mathscr{S}}$ and $C_R=\overline{R}\vert_{\mathscr{S}}$.
Using the map $[x_0:x_1:x_2:x_3]\mapsto [x_1:x_2]$, we can $G$-equivariantly identify
$\delta(\mathscr{S})=\mathbb{P}^1\times\mathbb{P}^1$ with coordinates $([x_0:x_3],[x_1:x_2])$ such that
the involution $\tau$ acts as $([x_0:x_3],[x_1:x_2])\mapsto([x_3:x_0],[x_2:x_1])$,
and $\Gamma\cong\mathbb{C}^\ast$ acts as
$$
\big([x_0:x_3],[x_1:x_2]\big)\mapsto\big([\lambda x_0:x_3],[\lambda x_1:x_2]\big),
$$
where $\lambda\in\mathbb{C}^\ast$. Then the only $G$-invariant irreducible curves in the surface $\delta(\mathscr{S})$ are the curves
$\delta(C_S)=\{x_0x_2+x_1x_3=0\}$ and $\delta(C_R)=\{x_0x_2-x_1x_3=0\}$,
which implies that $C_S$ and $C_R$ are the only $G$-invariant irreducible curves in $\mathscr{S}$,
so that $\mathscr{C}=C_S$ or $\mathscr{C}=C_R$.

The morphism $\delta$ in \eqref{equation:3-12-diagram} induces a
$G$-equivariant birational morphism $\sigma\colon\mathscr{S}\to\delta(\mathscr{S})$
that blows up the points $([0:1],[1:0])$ and $([1:0],[0:1])$,
which are not contained in the curves $\delta(C_S)$ and $\delta(C_R)$.
In particular, we see that $\mathscr{S}$ is a sextic del Pezzo surface.

Set $\mathbf{e}_1=\overline{E}_1\vert_{\mathscr{S}}$ and $\mathbf{e}_3=\overline{E}_3\vert_{\mathscr{S}}$.
Then $\mathbf{e}_1$ and $\mathbf{e}_3$ are the $\sigma$-exceptional curves
such that $\sigma(\mathbf{e}_1)=([0:1],[1:0])$ and $\sigma(\mathbf{e}_3)=([1:0],[0:1])$.
Let $\mathbf{s}_1$ and $\mathbf{s}_3$ be the proper transforms on $\mathscr{S}$ of the curves $\{x_2=0\}$ and $\{x_1=0\}$,
and~$\mathbf{l}_1$ and $\mathbf{l}_3$ be the proper transforms of the curves $\{x_2=0\}$ and $\{x_1=0\}$, respectively.
Then $\sigma(\mathbf{s}_1)$ and $\sigma(\mathbf{s}_3)$ are the sections of the natural projection $\delta(\mathscr{S})\to \ell_2$
that pass through the points $\delta(\mathbf{e}_1)$ and $\delta(\mathbf{e}_3)$, respectively,
and $\sigma(\mathbf{l}_1)$ and $\sigma(\mathbf{l}_3)$ are the fibres of this projection
that pass through the points $\delta(\mathbf{e}_1)$ and $\delta(\mathbf{e}_3)$, respectively.
Then $C_S\sim C_R\sim \mathbf{s}_1+\mathbf{l}_1+2\mathbf{e}_1\sim \mathbf{s}_3+\mathbf{l}_3+2\mathbf{e}_3$.

Recall that $\mathbf{e}_1$, $\mathbf{e}_3$, $\mathbf{s}_1$, $\mathbf{s}_3$, $\mathbf{l}_1$, $\mathbf{l}_3$
are all $(-1)$-curves in $\mathscr{S}$, they generate the Mori cone $\overline{\mathrm{NE}(\mathscr{S})}$.
Note that $\overline{H}\vert_{\mathscr{S}}\sim \mathbf{l}_1+\mathbf{e}_1$ and $\overline{E}_2\vert_{\mathscr{S}}\sim -\mathbf{s}_1+\mathbf{l}_1$,
and we have $\overline{H}_{12}\vert_{\mathscr{S}}=\mathbf{s}_1$ and $\overline{H}_{12}\vert_{\mathscr{S}}=\mathbf{s}_2$.
So, using the description of $P(u)$ and $N(u)$ obtained in the proof of Lemma~\ref{lemma:3-12-divisorial},
we get
$$
P(u)\big\vert_{\mathscr{S}}\sim_{\mathbb{R}}
\begin{cases}
3\mathbf{e}_1-\mathbf{e}_3+(3-u)\mathbf{l}_1+(u+1)\mathbf{s}_1 &\quad \text{if } 0\leqslant u\leqslant 1,\\
(4-u)\mathbf{e}_1+(u-2)\mathbf{e}_3+(3-u)\mathbf{l}_1+(3-u)\mathbf{s}_1 &\quad \text{if } 1\leqslant u\leqslant 2,
\end{cases}
$$
and
$$
N(u)\big\vert_{\mathscr{S}}=
\begin{cases}
0  & \quad \text{if } 0\leqslant u\leqslant 1,\\
(u-1)(\mathbf{s}_1+\mathbf{s}_2) & \quad \text{if } 1\leqslant u\leqslant 2.
\end{cases}
$$
In particular, we see that $\mathscr{C}\not\subset\mathrm{Supp}(N(u)\vert_{\mathscr{S}})$ for every $u\in[0,2]$.

Now, intersecting   $P(u)\vert_{\mathscr{S}}-v\mathscr{C}$ with $\mathbf{e}_1$, $\mathbf{e}_3$, $\mathbf{s}_1$, $\mathbf{s}_3$, $\mathbf{l}_1$, $\mathbf{l}_3$,
we find $P(u,v)$ and $N(u,v)$ for $u\in[0,2]$ and $v\in[0,t(u)]$.
If $u\in[0,1]$, then~$t(u)=1$,
$$
P(u,v)\sim_{\mathbb{R}}
\begin{cases}
(3-2v)\mathbf{e}_1-\mathbf{e}_3+(3-u-v)\mathbf{l}_1+(u-v+1)\mathbf{s}_1 &\quad\text{ if } 0\leqslant v \leqslant u,\\
(3-2v)\mathbf{e}_1-\mathbf{e}_3+(3-2v)\mathbf{l}_1+(u-v+1)\mathbf{s}_1 &\quad\text{ if } u\leqslant v\leqslant 1,
\end{cases}
$$
and
$$
N(u,v)=\begin{cases}
0 &\quad\text{ if } 0\leqslant v \leqslant u,\\
(v-u)\big(\mathbf{l}_1+\mathbf{l}_3\big) &\quad\text{ if } u\leqslant v\leqslant 1.
\end{cases}
$$
If $u\in[1,2]$, then
$t(u)=\frac{1}{2}$, $N(u,v)=0$ and $P(u,v)\sim_{\mathbb{R}} P(u)\vert_{\mathscr{S}}-v\mathscr{C}$.
This gives
$$
\big(P(u,v)\big)^2\sim_{\mathbb{R}}
\begin{cases}
4-2u^2+2v^2+4u-8v &\quad\text{ if } u\in[0,1], v\in [0,u],\\
4(1-v)(1+u-v) &\quad\text{ if } u\in[0,1], v\in [u,1],\\
2(1-2v)(5-2u-2v) &\quad\text{ if } u\in[1,2], v\in [0,0.5].
\end{cases}
$$
Now, by integrating we get $S(W^\mathscr{S}_{\bullet,\bullet};\mathscr{C})=\frac{9}{28}$,
so that  $\beta(\mathbf{F})>0$ by \eqref{equation:Kento-curve}.
\end{proof}

For $a\in\mathbb{C}^\ast$, let $\Pi_a=\{x_0-ax_1=x_3-ax_2\}\subset\mathbb{P}^3$.
Then $L_a\subset\Pi_a$, the plane $\Pi_a$ does not contain $L$, $\ell_1$, $\ell_2$, $\ell_3$, and neither $\ell_1\cap\ell_2$ nor $\ell_2\cap\ell_3$ lie on $\Pi_a$.
Set $P_1=\Pi_a\cap \ell_1$, $P_2=\Pi_a\cap \ell_2$, $P_3=\Pi_a\cap \ell_3$, $P_4=\Pi_a\cap L$.
Let $\overline{\Pi}_a$ be the preimage on  $\overline{X}$ of the plane $\Pi_a$.
Then $\varphi\circ\gamma$ in \eqref{equation:3-12-diagram} induces a~birational morphism $\overline{\Pi}_a\to\Pi_a$
that is a blowup of $P_1$, $P_2$, $P_3$, $P_4$.

\begin{lemma}
\label{lemma:3-12-dP5}
If $a\not\in\{1,2\}$, no three of $P_1, P_2, P_3$, and $P_4$ are collinear, and none of them lies on $L_a$.

When $a=1$, no three of $P_1, P_2, P_3$, and $P_4$ are collinear, $P_1$ and  $P_3$ lie on $L_1$, but $P_2$ and $P_4$ do not.
When $a=2$, then $P_1,P_3$ and $P_4$ lie on the line
$\Pi_2\cap\{x_0+x_1-x_2=0\}$, but $P_2$ does not, and none of $P_1,P_2, P_3$ or $P_4$ lies on $L_2$.
\end{lemma}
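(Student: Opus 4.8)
The plan is to prove every clause by a direct coordinate computation in the single parameter $a\in\mathbb{C}^\ast$. First I would write down the four points explicitly. Substituting the obvious parametrisations of $\ell_1,\ell_2,\ell_3,L$ into the equation $x_0-ax_1-x_3+ax_2=0$ of $\Pi_a$ gives $P_1=[1:1:0:1-a]$, $P_2=[1:0:0:1]$, $P_3=[1-a:0:1:1]$ and $P_4=[0:1:1:0]$; note that $P_2$ and $P_4$ do not depend on $a$. A useful observation that halves the bookkeeping is that the involution $\tau\colon[x_0:x_1:x_2:x_3]\mapsto[x_3:x_2:x_1:x_0]$ sends $P_1\leftrightarrow P_3$ and fixes $P_2$ and $P_4$, and that $\tau$ preserves every $L_a$ and every $\Pi_a$; hence all collinearity and incidence statements are $\tau$-symmetric.

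Next I would settle collinearity. Three of the points are collinear exactly when the $3\times 4$ matrix of their homogeneous coordinates has rank at most $2$, i.e.\ when all its $3\times 3$ minors vanish. For the triples $\{P_1,P_2,P_3\}$ and $\{P_1,P_2,P_4\}$ one computes a $3\times 3$ minor equal to $\pm 1$, so these are never collinear; by the $\tau$-symmetry the same holds for $\{P_2,P_3,P_4\}$. For the remaining triple $\{P_1,P_3,P_4\}$ every $3\times 3$ minor is a scalar multiple of $a-2$ (for instance the minor on the first three columns equals $a-2$), so these three points are collinear if and only if $a=2$. At $a=2$ I would identify their common line as $\Pi_2\cap H$, where $H=\{x_0-x_1+x_2=0\}$ is the plane through $P_1,P_3,P_4$, obtained by solving the corresponding linear system; a direct substitution shows that $P_2=[1:0:0:1]$ does not satisfy the equation of $H$, so $P_2$ lies off this line.

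Finally I would check incidence with $L_a$. Writing a general point of $L_a=\{x_0=ax_1,\,x_3=ax_2\}$ as $[au:u:v:av]$ and substituting, one sees that $P_2$ and $P_4$ can never lie on $L_a$ (their vanishing coordinates force $u=v=0$), while $P_1$ and $P_3$ lie on $L_a$ if and only if $a=1$. Assembling the three computations yields all three clauses: for $a\notin\{1,2\}$ no three points are collinear (as $a\ne 2$) and none lies on $L_a$ (as $a\ne 1$); for $a=1$ no three are collinear, whereas $P_1,P_3\in L_1$ and $P_2,P_4\notin L_1$; and for $a=2$ the points $P_1,P_3,P_4$ are collinear on $\Pi_2\cap H$ with $P_2$ off that line, while none of the four lies on $L_2$.

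The argument is entirely elementary linear algebra in $a$, so I do not expect a genuine obstacle; the only points requiring care are organisational. One must verify enough minors to certify that the rank really drops to $2$ (rather than relying on the accidental vanishing of a single minor), keep track of the standing hypothesis $a\ne 0$ when parametrising $L_a$ and $\Pi_a$, and correctly pin down the transversal plane $H$ that realises the collinear line at $a=2$.
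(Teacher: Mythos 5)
Your computation is correct, and since the paper's own proof of this lemma is literally ``Left to the reader,'' a direct coordinate check of exactly this kind is what the authors intended; I verified your points $P_1=[1:1:0:1-a]$, $P_2=[1:0:0:1]$, $P_3=[1-a:0:1:1]$, $P_4=[0:1:1:0]$, the $\tau$-symmetry ($\tau$ swaps $\ell_1\leftrightarrow\ell_3$, preserves $\ell_2$, $L$, $L_a$ and $\Pi_a$, hence swaps $P_1\leftrightarrow P_3$ and fixes $P_2,P_4$), the minors ($\pm1$ for every triple containing $P_2$; all four minors for $\{P_1,P_3,P_4\}$ equal to $a-2$ or $a(a-2)$), and the incidence analysis with $L_a$ (membership of $P_1,P_3$ iff $a=1$, never for $P_2,P_4$ given $a\neq 0$).

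One point you must not pass over silently: at $a=2$ your plane $H=\{x_0-x_1+x_2=0\}$ is \emph{not} the plane $\{x_0+x_1-x_2=0\}$ appearing in the statement, and your version is the correct one. Indeed $P_1=[1:1:0:-1]$ gives $x_0+x_1-x_2=2\neq0$ but $x_0-x_1+x_2=0$, and likewise $P_3=[-1:0:1:1]$ fails the printed equation; since $P_4=P_1+P_3$ as vectors, the line $\{[s-t:s:t:t-s]\}$ through the three points lies entirely in $\{x_0-x_1+x_2=0\}$, while it meets $\{x_0+x_1-x_2=0\}$ only at $P_4$. So the printed statement contains a sign typo, and as written your argument proves a corrected statement rather than the literal one; you should flag this explicitly instead of substituting the right plane without comment. (The same typo recurs in the proof of Lemma~\ref{lemma:3-12-L2}, but there only the divisor class $\mathbf{h}-\mathbf{e}_1-\mathbf{e}_3-\mathbf{e}_4$ of this line is used, so nothing downstream is affected by the correction.)
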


\begin{proof}
Left to the reader.
\end{proof}

Thus, if $a\ne 2$, $\overline{\Pi}_a$ is quintic del Pezzo surface, while if $a=2$, $\overline{\Pi}_a$ is a weak quintic del Pezzo surface.
In both cases, we let $Y=\overline{X}$ and $\mathscr{S}=\overline{\Pi}_a$.
Then
$$
-K_{Y}-u\mathscr{S}\sim_{\mathbb{R}}
\left(\frac{3}{2}-u\right)\mathscr{S}+
\frac{1}{2}\big(\overline{Q}+\overline{H}_{12}+\overline{H}_{23}\big)+\frac{1}{2}\overline{H}_L.
$$
Therefore, it follows from \eqref{equation:3-12-P1-P2} that $\tau=\frac{3}{2}$.
Moreover, if $0\leqslant u\leqslant 1$, then $N(u)=0$.
Furthermore, if $1\leqslant u\leqslant\frac{3}{2}$, then $N(u)=(u-1)(\overline{Q}+\overline{H}_{12}+\overline{H}_{23})$.
Thus, we have
$$
P(u)\sim_{\mathbb{R}}\begin{cases}
\left(\frac{3}{2}-u\right)\mathscr{S}+\frac{1}{2}\big(\overline{Q}+\overline{H}_{12}+\overline{H}_{23}\big)+\frac{1}{2}\overline{H}_L& \quad \text{if } 0\leqslant u\leqslant 1,\\
\left(\frac{3}{2}-u\right)\big(\mathscr{S}+\overline{Q}+\overline{H}_{12}+\overline{H}_{23}\big)+\frac{1}{2}\overline{H}_L& \quad \text{if } 1\leqslant u\leqslant \frac{3}{2},
\end{cases}
$$
By integrating we obtain $S_Y(\mathscr{S})=\frac{227}{448}$.

Now, let $\mathbf{e}_1$, $\mathbf{e}_2$, $\mathbf{e}_3$, $\mathbf{e}_4$
be exceptional curves of the blowup $\mathscr{S}\to\Pi_a$
that are mapped to the points $P_1$, $P_2$, $P_3$, $P_4$, respectively.
Then $\overline{E}_1\vert_{\mathscr{S}}=\mathbf{e}_1$, $\overline{E}_2\vert_{\mathscr{S}}=\mathbf{e}_2$, $\overline{E}_3\vert_{\mathscr{S}}=\mathbf{e}_3$, $\overline{E}_L\vert_{\mathscr{S}}=\mathbf{e}_4$.
Set $\mathbf{h}=\overline{H}\vert_{\mathscr{S}}$. Then
$$
P(u)\big\vert_{\mathscr{S}}\sim_{\mathbb{R}}\begin{cases}
(4-u)\mathbf{h}-\mathbf{e}_1-\mathbf{e}_2-\mathbf{e}_3-\mathbf{e}_4& \quad \text{if } 0\leqslant u\leqslant 1,\\
(8-5u)\mathbf{h}-(3-2u)\big(\mathbf{e}_1+\mathbf{e}_2+\mathbf{e}_3\big)-(2-u)\mathbf{e}_4& \quad \text{if } 1\leqslant u\leqslant \frac{3}{2}.
\end{cases}
$$
Note that $L_a\not\subset H_{12}\cup H_{23}$ for every $a\in\mathbb{C}^\ast$.
Moreover, one has $L_a\subset Q$ if and only if $a=2$.

\begin{lemma}
\label{lemma:3-12-La}
Suppose that $\pi(Z)=L_a$ for $a\in\mathbb{C}\setminus\{0,1,2\}$.
Then $\beta(\mathbf{F})>0$.
\end{lemma}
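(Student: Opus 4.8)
The plan is to feed the flag $\mathscr{C}\subset\mathscr{S}$ into the curve estimate \eqref{equation:Kento-curve}, taking $\mathscr{S}=\overline{\Pi}_a$ (the quintic del Pezzo surface set up just above, since $a\notin\{0,1,2\}$) and $\mathscr{C}=\overline{Z}$ equal to the proper transform of $L_a$. Because $L_a\subset\Pi_a$ and, by Lemma~\ref{lemma:3-12-dP5}, none of $P_1,P_2,P_3,P_4$ lies on $L_a$ for $a\notin\{0,1,2\}$, the curve $\mathscr{C}$ misses the exceptional curves $\mathbf{e}_1,\dots,\mathbf{e}_4$, so $\mathscr{C}\sim\mathbf{h}$ on $\mathscr{S}$. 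Since $S_Y(\mathscr{S})=\tfrac{227}{448}<1$ has already been computed, by \eqref{equation:Kento-curve} (sharpened in Remark~\ref{remark:Kento-curve-strict}) it is enough to establish $S\big(W^\mathscr{S}_{\bullet,\bullet};\mathscr{C}\big)<1$.

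First I would record that $\mathscr{C}$ is not a component of $N(u)|_{\mathscr{S}}$ for any $u\in[0,\tfrac32]$. Indeed $N(u)$ is supported on $\overline{Q}+\overline{H}_{12}+\overline{H}_{23}$, whose restrictions to $\mathscr{S}$ are a conic ($\overline{Q}\cap\Pi_a$) and two lines ($\overline{H}_{12}\cap\Pi_a$, $\overline{H}_{23}\cap\Pi_a$), none of which is $L_a$. Hence $d(u)=0$, and only the double integral $\tfrac{3}{28}\int_0^{3/2}\!\int_0^{t(u)}\big(P(u,v)\big)^2\,dv\,du$ survives in the definition of $S\big(W^\mathscr{S}_{\bullet,\bullet};\mathscr{C}\big)$.

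Next I would compute the Zariski decomposition of $P(u)|_{\mathscr{S}}-v\mathbf{h}$, using that $\overline{\mathrm{NE}}(\mathscr{S})$ is generated by the ten $(-1)$-curves $\mathbf{e}_i$ and $\mathbf{h}-\mathbf{e}_i-\mathbf{e}_j$, while the nef cone is generated by the five conic classes $\mathbf{h}-\mathbf{e}_i$ and $2\mathbf{h}-\mathbf{e}_1-\mathbf{e}_2-\mathbf{e}_3-\mathbf{e}_4$. For $u\in[0,1]$, where $P(u)|_{\mathscr{S}}\sim(4-u)\mathbf{h}-\sum_i\mathbf{e}_i$, the divisor $(4-u-v)\mathbf{h}-\sum_i\mathbf{e}_i$ stays nef on $[0,2-u]$, so $t(u)=2-u$, $N(u,v)=0$, and $\big(P(u,v)\big)^2=(4-u-v)^2-4$; this range is clean. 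The delicate range is $u\in[1,\tfrac32]$, where $P(u)|_{\mathscr{S}}\sim(8-5u)\mathbf{h}-(3-2u)(\mathbf{e}_1+\mathbf{e}_2+\mathbf{e}_3)-(2-u)\mathbf{e}_4$ is asymmetric in the four points. Here the three mutually disjoint $(-1)$-curves $\mathbf{g}_i=\mathbf{h}-\mathbf{e}_i-\mathbf{e}_4$ acquire negative intersection once $v>3-2u$, and subtracting $(v+2u-3)(\mathbf{g}_1+\mathbf{g}_2+\mathbf{g}_3)$ restores nefness; testing $P(u)|_{\mathscr{S}}$ against the conic classes gives the pseudo-effective threshold $t(u)=\min\{\tfrac{5-3u}{2},\,6-4u\}$, which switches form at $u=\tfrac75$. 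On each subregion one finds $\big(P(u,v)\big)^2$ in closed form (for instance $\big(P(u,v)\big)^2=(D'_v)^2+3(3-2u-v)^2$ when the negative part is present, where $D'_v=P(u)|_{\mathscr{S}}-v\mathbf{h}$), and then integrates. The $u\in[0,1]$ piece contributes $\tfrac{3}{28}\cdot\tfrac{71}{12}$, and the short $u\in[1,\tfrac32]$ piece adds only a small positive amount, so the total $S\big(W^\mathscr{S}_{\bullet,\bullet};\mathscr{C}\big)$ comes out strictly below $1$; combined with $S_Y(\mathscr{S})<1$, equation \eqref{equation:Kento-curve} yields $\beta(\mathbf{F})>0$.

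The main obstacle is precisely the asymmetric range $u\in[1,\tfrac32]$: correctly identifying that only $\mathbf{g}_1,\mathbf{g}_2,\mathbf{g}_3$ enter the negative part (the $E_L$-direction $\mathbf{e}_4$ being distinguished from $\mathbf{e}_1,\mathbf{e}_2,\mathbf{e}_3$), pinning down the piecewise threshold $t(u)$ and its break at $u=\tfrac75$, and carrying out the ensuing case-split integration without arithmetic error. Everything else is a routine consequence of the del Pezzo geometry of $\mathscr{S}$ and the data already assembled in the proof of Lemma~\ref{lemma:3-12-divisorial}.
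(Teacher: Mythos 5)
Your proposal is correct and follows essentially the same route as the paper: you use the same flag $\mathscr{C}=\overline{Z}\sim\mathbf{h}\subset\mathscr{S}=\overline{\Pi}_a$, the same thresholds $t(u)=2-u$ on $[0,1]$ and $\min\bigl\{\tfrac{5-3u}{2},\,6-4u\bigr\}$ on $[1,\tfrac32]$ with the break at $u=\tfrac75$, and your Zariski chambers (negative part $(v+2u-3)(\mathbf{g}_1+\mathbf{g}_2+\mathbf{g}_3)$ with $\mathbf{g}_i=\mathbf{h}-\mathbf{e}_i-\mathbf{e}_4$ once $v>3-2u$) reproduce exactly the paper's $P(u,v)$, yielding $S\bigl(W^{\mathscr{S}}_{\bullet,\bullet};\mathscr{C}\bigr)=\tfrac{3}{28}\bigl(\tfrac{71}{12}+\tfrac{43}{120}\bigr)=\tfrac{753}{1120}<1$, which is the paper's value. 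Two cosmetic points: the nef cone of a quintic del Pezzo surface is not generated by the five conic classes alone (e.g.\ $\mathbf{h}$ itself is an extreme nef ray), though your actual nefness tests against the ten $(-1)$-curves and the thresholds read off the conic classes are correct; and the $u\in[1,\tfrac32]$ contribution you left as ``a small positive amount'' evaluates to $\tfrac{43}{1120}$, confirming the strict inequality.
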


\begin{proof}
Let $\mathscr{C}=\overline{Z}$. Note that $\mathscr{C}\sim\mathbf{h}$.
Arguing as in the proof of \cite[Lemma 4.1]{Denisova}, we get
$$
t(u)=\begin{cases}
2-u& \quad \text{if } 0\leqslant u \leqslant 1,\\
\frac{5-3u}{2} & \quad \text{if } 1\leqslant u \leqslant \frac{7}{5},\\
6-4u & \quad \text{if } \frac{7}{5}\leqslant u \leqslant \frac{3}{2}.
\end{cases}
$$
Moreover, if $0\leqslant u\leqslant 1$, then
$P(u,v)\sim_{\mathbb{R}}(4-u-v)\mathbf{h}-\mathbf{e}_1-\mathbf{e}_2-\mathbf{e}_3-\mathbf{e}_4$ for $v\in[0,2-u]$.
Similarly, if $1\leqslant u \leqslant \frac{3}{2}$ and $0\leqslant v \leqslant 3-2u$, then
$$
P(u,v)\sim_{\mathbb{R}}(8-5u-v)\mathbf{h}-(3-2u)\big(\mathbf{e}_1+\mathbf{e}_2+\mathbf{e}_3\big)-(u-2)\mathbf{e}_4.
$$
Finally, if $1\leqslant u \leqslant \frac{3}{2}$ and $3-2u\leqslant v\leqslant t(u)$, then
$$
P(u,v)\sim_{\mathbb{R}}(17-11u-4v)\mathbf{h}-(6-4u-v)\big(\mathbf{e}_1+\mathbf{e}_2+\mathbf{e}_3\big)-(11-7u-3v)\mathbf{e}_4.
$$
This gives $S(W^\mathscr{S}_{\bullet,\bullet};\mathscr{C})=\frac{753}{1120}$, so that $\beta(\mathbf{F})>0$ by \eqref{equation:Kento-curve}, since $S_Y(\mathscr{S})=\frac{227}{448}$.
\end{proof}

\begin{lemma}
\label{lemma:3-12-L1}
Suppose that $\pi(Z)=L_1$.
Then $\beta(\mathbf{F})>0$.
\end{lemma}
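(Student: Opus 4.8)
The plan is to apply the curve estimate~\eqref{equation:Kento-curve} on $Y=\overline{X}$ with the flag $\mathscr{C}\subset\mathscr{S}$, taking $\mathscr{S}=\overline{\Pi}_1$ to be the quintic del Pezzo surface set up above and $\mathscr{C}=\overline{Z}$. First I would note that $\mathscr{C}$ really is a curve: the two $\Gamma$-fixed points of $L_1$, namely $[1:1:0:0]$ and $[0:0:1:1]$, are interchanged by $\tau$, so $L_1$ carries no $G$-fixed point and the $G$-invariant centre $\overline{Z}$ is forced to be the entire proper transform of $L_1$. By Lemma~\ref{lemma:3-12-dP5}, for $a=1$ the points $P_1$ and $P_3$ lie on $L_1$ whereas $P_2$ and $P_4$ do not; hence, in the basis $\mathbf{h},\mathbf{e}_1,\mathbf{e}_2,\mathbf{e}_3,\mathbf{e}_4$, we have $\mathscr{C}\sim\mathbf{h}-\mathbf{e}_1-\mathbf{e}_3$, a $(-1)$-curve. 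This is exactly the feature that separates the present case from Lemma~\ref{lemma:3-12-La}, where $\mathscr{C}\sim\mathbf{h}$.

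Next I would recycle $S_Y(\mathscr{S})=\frac{227}{448}<1$ and the expressions for $P(u)|_{\mathscr{S}}$ recorded above (with $\tau=\frac{3}{2}$), and compute the Zariski decomposition of $P(u)|_{\mathscr{S}}-v\mathscr{C}$ on $\mathscr{S}$. For $0\leqslant u\leqslant 1$ the curves $\mathbf{e}_1,\mathbf{e}_3$ enter the negative part at $v=1$ and $\mathbf{h}-\mathbf{e}_2-\mathbf{e}_4$ enters at $v=2-u$, so $[0,t(u)]$ splits into three chambers with $t(u)=3-u$, and $(P(u,v))^2$ equals $(4-u-v)^2-2(1-v)^2-2$, then $(4-u-v)^2-2$, then $2(3-u-v)^2$. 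For $1\leqslant u\leqslant\frac{3}{2}$ the three mutually disjoint $(-1)$-curves $\mathbf{e}_1$, $\mathbf{e}_3$ and $\mathbf{h}-\mathbf{e}_2-\mathbf{e}_4$ all enter simultaneously at $v=3-2u$, and the pseudo-effective threshold $t(u)=6-4u$ is attained exactly when $\mathbf{e}_2$ would appear, at which point $P(u,v)$ degenerates to a multiple of $\mathbf{h}-\mathbf{e}_4$ and $(P(u,v))^2=0$.

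Because $L_1$ lies on none of $Q$, $H_{12}$, $H_{23}$, the curve $\mathscr{C}$ is not a component of $N(u)|_{\mathscr{S}}$ for any $u$; thus $d(u)\equiv 0$ and the first integral defining $S(W^{\mathscr{S}}_{\bullet,\bullet};\mathscr{C})$ drops out. I would then evaluate $\frac{3}{28}\int_0^{3/2}\int_0^{t(u)}(P(u,v))^2\,dv\,du$ chamber by chamber and confirm that it is strictly less than $1$. Together with $S_Y(\mathscr{S})<1$, the estimate~\eqref{equation:Kento-curve} then gives $\frac{A_X(\mathbf{F})}{S_X(\mathbf{F})}>1$, i.e.\ $\beta(\mathbf{F})>0$.

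The principal difficulty is the Zariski-decomposition bookkeeping on $\mathscr{S}$: one must keep track of which $(-1)$-curves enter the negative part as $v$ increases, treat correctly the coincidence at $v=3-2u$ where three curves enter at once, and check that $t(u)=6-4u$ is the true threshold rather than a later value. Once these chamber boundaries are pinned down, the remaining polynomial integration is routine.
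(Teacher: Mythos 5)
Your chamber bookkeeping is genuinely correct, and it coincides exactly with the paper's own proof: the paper also takes $Y=\overline{X}$, $\mathscr{S}=\overline{\Pi}_1$, $\mathscr{C}=\overline{Z}\sim\mathbf{h}-\mathbf{e}_1-\mathbf{e}_3$, finds the walls at $v=1$ and $v=2-u$ with $t(u)=3-u$ for $0\leqslant u\leqslant 1$, the simultaneous wall at $v=3-2u$ with $t(u)=6-4u$ for $1\leqslant u\leqslant\frac{3}{2}$, and records precisely your values of $(P(u,v))^2$ in all five chambers (I re-derived the Zariski decompositions on the del Pezzo quintic and they agree; your observation that $d(u)\equiv 0$ because $L_1\not\subset Q\cup H_{12}\cup H_{23}$ also matches the paper). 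So up to the last step you have reproduced the paper's argument.

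The gap is the one step you defer: ``evaluate \dots and confirm that it is strictly less than $1$.'' Carrying out that integration with your (correct) integrands gives, chamber by chamber, $\int_0^1\int_0^1(u^2+2uv-v^2-8u-4v+12)=\frac{13}{2}$, then $\frac{7}{4}$, $\frac{2}{3}$, $\frac{25}{48}$, $\frac{5}{48}$, for a total of $\frac{229}{24}$ and hence $S(W^{\mathscr{S}}_{\bullet,\bullet};\mathscr{C})=\frac{3}{28}\cdot\frac{229}{24}=\frac{229}{224}>1$. The paper asserts the value $\frac{31}{32}$, but that number is inconsistent with its own displayed integrands: it is exactly what one obtains if the cross-term $2uv$ is dropped from the first integrand (which lowers the first integral from $\frac{13}{2}$ to $6$ and the total to $\frac{217}{24}$, and $\frac{3}{28}\cdot\frac{217}{24}=\frac{31}{32}$); since $(4-u-v)^2-2(1-v)^2-2$ genuinely contains $+2uv$, the correct value of this $S$-invariant exceeds $1$. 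Consequently \eqref{equation:Kento-curve} only yields $\min\{\frac{448}{227},\frac{224}{229}\}<1$, and even the relaxed version in Remark~\ref{remark:Kento-curve-strict}, which tolerates $S(W^{\mathscr{S}}_{\bullet,\bullet};\mathscr{C})=1$, does not apply. So your plan, executed faithfully, does not close this case (and it exposes an arithmetic slip in the printed proof as well): the flag $(\overline{\Pi}_1,\overline{L}_1)$ falls short by a hair, and some new input is needed — for instance a different flag surface (a natural $G$-invariant candidate containing $L_1$ is the proper transform of the quadric $R=\{x_0x_2=x_1x_3\}$, which contains every $L_a$), the other small resolution $\widetilde{X}$, or a finer estimate than \eqref{equation:Kento-curve} — rather than the promised routine verification.
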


\begin{proof}
Let $\mathscr{C}=\overline{Z}$. Then $\mathscr{C}\sim\mathbf{h}-\mathbf{e}_1-\mathbf{e}_3$.
Moreover, if $0\leqslant u \leqslant 1$, then $t(u)=3-u$.
Similarly, if $1\leqslant u \leqslant \frac{3}{2}$, then $t(u)=6-4u$.
Furthermore, if $0\leqslant u\leqslant 1$, then
$$
P(u,v)\sim_{\mathbb{R}}
\begin{cases}
(4-u-v)\mathbf{h}+(v-1)\big(\mathbf{e}_1+\mathbf{e}_3\big)-\mathbf{e}_2-\mathbf{e}_4 &\text{ if } 0\leqslant v \leqslant 1,\\
(4-u-v)\mathbf{h}-\mathbf{e}_2-\mathbf{e}_4 &\text{ if } 1\leqslant v \leqslant 2-u,\\
(3-2u-2v)\big(2\mathbf{h}-\mathbf{e}_2-\mathbf{e}_4\big) &\text{ if } 2-u\leqslant v \leqslant 3-u.
\end{cases}
$$
Similarly, if $1\leqslant u \leqslant \frac{3}{2}$ and $0\leqslant v \leqslant 3-2u$, then
$$
P(u,v)\sim_{\mathbb{R}}(8-5u-v)\mathbf{h}-(3-2u-v)\big(\mathbf{e}_1+\mathbf{e}_3\big)-(3-2u)\mathbf{e}_2-(u-2)\mathbf{e}_4.
$$
Finally, if $1\leqslant u \leqslant \frac{3}{2}$ and $3-2u\leqslant v\leqslant 6-4u$, then
$$
P(u,v)\sim_{\mathbb{R}}(11-7u-2v)\mathbf{h}-(6-4u-v)\mathbf{e}_2-(5-3u-v)\mathbf{e}_4.
$$
Therefore, we have
\begin{multline*}
S\big(W^\mathscr{S}_{\bullet,\bullet};\mathscr{C}\big)=\frac{3}{28}\int\limits_{0}^{1}\int\limits_{0}^1u^2+2uv-v^2-8u-4v+12dvdu+\\
+\frac{3}{28}\int\limits_{1}^{2-u}\int\limits_{1}^{2-u}u^2+2uv+v^2-8u-8v+14dvdu+\frac{3}{28}\int\limits_{2-u}^{3-u}\int\limits_{2-u}^{3-u}2(3-u-v)^2dvdu+\\
+\frac{3}{28}\int\limits_{1}^{\frac{3}{2}}\int\limits_{0}^{3-2u}12u^2+2uv-v^2-40u-4v+33dvdu+\frac{3}{28}\int\limits_{1}^{\frac{3}{2}}\int\limits_{3-2u}^{6-4u}2(6-4u-v)(5-3u-v)dvdu=\frac{31}{32}.
\end{multline*}
Thus, it follows from \eqref{equation:Kento-curve} that $\beta(\mathbf{F})>0$, because $S_Y(\mathscr{S})=\frac{227}{448}$.
\end{proof}

\begin{lemma}
\label{lemma:3-12-L2}
Suppose that $\pi(Z)=L_2$. Then $\beta(\mathbf{F})>0$.
\end{lemma}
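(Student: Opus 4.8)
The plan is to run the Abban--Zhuang induction exactly as in Lemmas~\ref{lemma:3-12-La} and~\ref{lemma:3-12-L1}, with $Y=\overline{X}$, $\mathscr{S}=\overline{\Pi}_2$ and $\mathscr{C}=\overline{Z}$. All the needed three-fold data is already in place just before Lemma~\ref{lemma:3-12-La}: one has $\tau=\tfrac{3}{2}$, $S_Y(\mathscr{S})=\tfrac{227}{448}<1$, the Zariski decomposition $-K_Y-u\mathscr{S}\sim_{\mathbb{R}}P(u)+N(u)$, and the restriction $P(u)\vert_{\mathscr{S}}$. Since $L_2\subset\Pi_2$ and, by Lemma~\ref{lemma:3-12-dP5}, none of $P_1,P_2,P_3,P_4$ lies on $L_2$, the curve $\mathscr{C}$ is the strict transform of a line of $\Pi_2$ missing the four centres, so $\mathscr{C}\sim\mathbf{h}$; it is a smooth $G$-invariant rational curve, and as the two $\Gamma$-fixed points of $L_2$ are exchanged by $\tau$, the centre $\overline{Z}$ is this curve rather than a point. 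Thus, by~\eqref{equation:Kento-curve} together with $S_Y(\mathscr{S})<1$, it suffices to prove $S(W^{\mathscr{S}}_{\bullet,\bullet};\mathscr{C})<1$.

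Two features distinguish the case $a=2$ from the generic case of Lemma~\ref{lemma:3-12-La}. First, by Lemma~\ref{lemma:3-12-dP5} the points $P_1,P_3,P_4$ are collinear, so $\mathscr{S}$ is only a \emph{weak} quintic del Pezzo surface: the line through them is the residual component $L'$ of the conic $Q\cap\Pi_2=L_2\cup L'$, and its strict transform $\overline{L}'\sim\mathbf{h}-\mathbf{e}_1-\mathbf{e}_3-\mathbf{e}_4$ is a $(-2)$-curve. Second, and this is the essential new point, $L_2\subset Q$ (which, as noted before Lemma~\ref{lemma:3-12-La}, happens precisely for $a=2$), so $\overline{Q}\vert_{\mathscr{S}}\sim 2\mathbf{h}-\mathbf{e}_1-\mathbf{e}_3-\mathbf{e}_4$ splits as $\mathscr{C}+\overline{L}'$. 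Hence $\mathscr{C}$ now occurs in the negative part: from $N(u)=(u-1)(\overline{Q}+\overline{H}_{12}+\overline{H}_{23})$ on $[1,\tfrac{3}{2}]$ and $L_2\not\subset H_{12}\cup H_{23}$, one gets $d(u)=\mathrm{ord}_{\mathscr{C}}\!\big(N(u)\vert_{\mathscr{S}}\big)=u-1$ there (and $d(u)=0$ on $[0,1]$). This divisorial term was absent in Lemma~\ref{lemma:3-12-La}; since $P(u)\vert_{\mathscr{S}}$ is nef with $P(u)\vert_{\mathscr{S}}\cdot\overline{L}'=0$ on $[1,\tfrac{3}{2}]$, one has $(P(u,0))^2=(P(u)\vert_{\mathscr{S}})^2=12u^2-40u+33$, so it contributes $\tfrac{3}{28}\int_1^{3/2}(u-1)(12u^2-40u+33)\,du=\tfrac{1}{64}$.

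For the surface integral I would compute the Zariski decomposition of $P(u)\vert_{\mathscr{S}}-v\mathscr{C}$ on the weak del Pezzo $\mathscr{S}$. Since $\mathscr{C}\cdot\overline{L}'=1$ while $\mathscr{C}\cdot\mathbf{e}_i=0$, subtracting $v\mathscr{C}$ first drives $\overline{L}'$ into $N(u,v)$ — at $v=1-u$ for $u\in[0,1]$ and immediately for $u\in[1,\tfrac32]$, where $P(u)\vert_{\mathscr{S}}\perp\overline{L}'$ — after which the $(-1)$-curves $\mathbf{h}-\mathbf{e}_2-\mathbf{e}_i$ through the off-line point $P_2$, and then the exceptional curves over the three collinear points, successively become relevant. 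Recording the chambers, the pseudo-effective threshold $t(u)$ and the positive parts $P(u,v)$, integrating $(P(u,v))^2$, and adding the $\tfrac{1}{64}$ above, one arrives at $S(W^{\mathscr{S}}_{\bullet,\bullet};\mathscr{C})<1$; then $\beta(\mathbf{F})>0$ by~\eqref{equation:Kento-curve}.

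The hard part is exactly this weak del Pezzo bookkeeping. Unlike the strict quintic del Pezzo of Lemma~\ref{lemma:3-12-La}, the $(-2)$-curve $\overline{L}'$ lies on the boundary of the nef cone and sits in $N(u,v)$ over most of the relevant range, shifting all the chamber walls. More delicately, the exceptional curves $\mathbf{e}_1,\mathbf{e}_3,\mathbf{e}_4$ over the collinear points cannot be added to the negative part together with $\overline{L}'$ in the naive way, since $\overline{L}'+\mathbf{e}_1+\mathbf{e}_3+\mathbf{e}_4\sim\mathbf{h}=\mathscr{C}$ is nef; the support of $N(u,v)$ must stay negative-definite, so identifying the correct positive part near the pseudo-effective boundary requires care. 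Finally, at the endpoint $u=\tfrac32$ the class $P(u)\vert_{\mathscr{S}}$ degenerates to one of self-intersection $0$ and the threshold $t(u)$ collapses to $0$, so this boundary stratum must be handled separately. Carrying out these chamber computations and combining them with the new $d(u)$-contribution forced by $L_2\subset Q$ is where essentially all the work lies.
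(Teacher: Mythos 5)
Your setup is exactly the paper's: the flag $Y=\overline{X}$, $\mathscr{S}=\overline{\Pi}_2$, $\mathscr{C}=\overline{Z}\sim\mathbf{h}$, the reduction via \eqref{equation:Kento-curve} using $S_Y(\mathscr{S})=\frac{227}{448}<1$, and — crucially — the identification of the one genuinely new phenomenon at $a=2$: since $L_2\subset Q$, the restriction $\overline{Q}\vert_{\mathscr{S}}\sim 2\mathbf{h}-\mathbf{e}_1-\mathbf{e}_3-\mathbf{e}_4$ splits as $\mathscr{C}+\mathbf{l}$ (the paper's notation for your $\overline{L}'$), so $d(u)=\mathrm{ord}_{\mathscr{C}}(N(u)\vert_{\mathscr{S}})=u-1$ on $[1,\frac{3}{2}]$. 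The pieces you do compute are correct and agree with the paper: $P(u)\vert_{\mathscr{S}}$ is nef with $P(u)\vert_{\mathscr{S}}\cdot\mathbf{l}=0$ on $[1,\frac32]$, $(P(u,0))^2=12u^2-40u+33$, and the divisorial term $\frac{3}{28}\int_1^{3/2}(u-1)(12u^2-40u+33)\,du=\frac{1}{64}$ is precisely the first integral in the paper's evaluation of $S(W^{\mathscr{S}}_{\bullet,\bullet};\mathscr{C})$.

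The gap is that the decisive inequality $S(W^{\mathscr{S}}_{\bullet,\bullet};\mathscr{C})<1$ is asserted, not proved: you never record $t(u)$, the chamber walls, or the positive parts, and in a computation of this type that \emph{is} the proof. The omission is material, not cosmetic — in the sibling Lemma~\ref{lemma:3-12-L1} the analogous quantity comes out to $\frac{31}{32}$, within $\frac{1}{32}$ of failure, so one cannot know in advance that the weak-del-Pezzo case lands safely below $1$ (the paper's explicit eight-chamber computation gives $t(u)=\frac{7-3u}{3}$, $\frac{10-6u}{3}$, $6-4u$ on $[0,1]$, $[1,\frac43]$, $[\frac43,\frac32]$ and total $S(W^{\mathscr{S}}_{\bullet,\bullet};\mathscr{C})=\frac{2885}{4032}$). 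Your qualitative sketch of the chambers is also slightly off in one respect: after $\mathbf{l}$ enters $N(u,v)$ (at $v=1-u$ for $u\le1$, immediately for $u\ge1$, with slope $\frac12$ — correct), the curves that subsequently enter the negative part are only the $(-1)$-curves $\mathbf{l}_{24}\sim\mathbf{h}-\mathbf{e}_2-\mathbf{e}_4$ (at $v=3-2u$ for $u\ge1$) and $\mathbf{l}_{12},\mathbf{l}_{23}$ (at $v=2-u$); the exceptional curves $\mathbf{e}_1,\mathbf{e}_3,\mathbf{e}_4$ never appear in $N(u,v)$, their coefficients in $P(u,v)$ remaining positive up to $t(u)$. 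Your negative-definiteness remark about $\mathbf{l}+\mathbf{e}_1+\mathbf{e}_3+\mathbf{e}_4\sim\mathbf{h}$ correctly explains why they cannot all be extracted, but as written the proposal leaves open which decomposition actually holds, and hence leaves the lemma unproved.
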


\begin{proof}
Let us introduce several curves on the surface $\mathscr{S}=\overline{\Pi}_2$ as follows:
\begin{itemize}
\item let $\mathbf{l}_{12}$ be the proper transform of the line in $\Pi_2$ that contains $P_1$ and $P_2$,
\item let $\mathbf{l}_{23}$ be the proper transform of the line in $\Pi_2$ that contains $P_2$ and $P_3$,
\item let $\mathbf{l}_{24}$ be the proper transform of the line in $\Pi_2$ that contains $P_2$ and $P_4$,
\item let $\mathbf{l}$ be the proper transform on $\mathscr{S}$ of the line $\Pi_2\cap\{x_0+x_1-x_2=0\}$.
\end{itemize}
On $\mathscr{S}$, we have $\mathbf{l}_{12}\sim\mathbf{h}-\mathbf{e}_{1}-\mathbf{e}_{2}$, $\mathbf{l}_{23}\sim\mathbf{h}-\mathbf{e}_{2}-\mathbf{e}_{3}$.
$\mathbf{l}_{24}\sim\mathbf{h}-\mathbf{e}_{2}-\mathbf{e}_{4}$, $\mathbf{l}\sim\mathbf{h}-\mathbf{e}_{1}-\mathbf{e}_{3}-\mathbf{e}_{4}$.
Note that
$\mathbf{e}_1$, $\mathbf{e}_2$, $\mathbf{e}_3$, $\mathbf{e}_4$, $\mathbf{l}_{12}$, $\mathbf{l}_{23}$, $\mathbf{l}_{24}$ are all $(-1)$-curves in $\mathscr{S}$,
and $\mathbf{l}$ is the unique $(-2)$-curve in the surface $\mathscr{S}$.
By \cite[Proposition~8.5]{Coray1988},
these curves generate the Mori cone $\overline{\mathrm{NE}(\mathscr{S})}$.

Now, let $\mathscr{C}=\overline{Z}$. Then $\mathscr{C}\sim\mathbf{h}$.
Moreover, intersecting the divisors under consideration with the curves $\mathbf{e}_1$, $\mathbf{e}_2$, $\mathbf{e}_3$, $\mathbf{e}_4$, $\mathbf{l}_{12}$, $\mathbf{l}_{23}$, $\mathbf{l}_{24}$, $\mathbf{l}$,
we see that
$$
t(u)=\begin{cases}
\frac{7-3u}{3}& \quad \text{if } 0\leqslant u \leqslant 1,\\
\frac{10-6u}{3} & \quad \text{if } 1\leqslant u \leqslant \frac{4}{3},\\
6-4u & \quad \text{if } \frac{4}{3}\leqslant u \leqslant \frac{3}{2}.
\end{cases}
$$
Furthermore, if $0\leqslant u\leqslant 1$, then
$$
P(u,v)\sim_{\mathbb{R}}
\begin{cases}
(4-u-v)\mathbf{h}-\mathbf{e}_1-\mathbf{e}_2-\mathbf{e}_3-\mathbf{e}_4 &\text{ if } 0\leqslant v \leqslant 1-u,\\
\frac{3-u-v}{2}\big(3\mathbf{h}-\mathbf{e}_1-\mathbf{e}_3-\mathbf{e}_4\big)-\mathbf{e}_2 &\text{ if } 1-u\leqslant v \leqslant 2-u,\\
\frac{7-3u-3v}{2}\big(3\mathbf{h}-\mathbf{e}_1-2\mathbf{e}_2-\mathbf{e}_3-\mathbf{e}_4\big) &\text{ if } 2-u\leqslant v \leqslant \frac{7-3u}{3},
\end{cases}
$$
and
$$
N(u,v)=
\begin{cases}
0 &\quad\text{ if } 0\leqslant v \leqslant 1-u,\\
\frac{v+u-1}{2}\mathbf{l} &\quad\text{ if } 1-u\leqslant v \leqslant 2-u,\\
\frac{v+u-1}{2}\mathbf{l}+(u+v-2)\big(\mathbf{l}_{12}+\mathbf{l}_{23}+\mathbf{l}_{24}\big) &\quad\text{ if } 2-u\leqslant v \leqslant \frac{7-3u}{3}.
\end{cases}
$$
Similarly, if $1\leqslant u \leqslant \frac{4}{3}$, then $P(u,v)$ is $\mathbb{R}$-rationally equivalent to
$$
\begin{cases}
\frac{16-10u-3v}{2}\mathbf{h}-\frac{6-4u-v}{2}\big(\mathbf{e}_1+\mathbf{e}_3\big)-(3-2u)\mathbf{e}_2-\frac{4-2u-v}{2}\mathbf{e}_4 &\text{ if } 0\leqslant v \leqslant 3-2u,\\
\frac{22-14u-5v}{2}\mathbf{h}-\frac{6-4u-v}{2}\big(\mathbf{e}_1+2\mathbf{e}_2+\mathbf{e}_3\big)-\frac{10-6u-3v}{2}\mathbf{e}_4 &\text{ if } 3-2u\leqslant v \leqslant 2-u,\\
\frac{10-6u-3v}{2}\big(\mathbf{h}-\mathbf{e}_1+2\mathbf{e}_2-\mathbf{e}_3+\mathbf{e}_4\big) &\text{ if } 2-u\leqslant v\leqslant\frac{10-6u}{3},
\end{cases}
$$
and
$$
N(u,v)=
\begin{cases}
\frac{v}{2}\mathbf{l} &\quad\text{ if } 0\leqslant v \leqslant 3-2u,\\
\frac{v}{2}\mathbf{l}+(v+2u-3)\mathbf{l}_{24}&\quad\text{ if } 3-2u\leqslant v \leqslant 2-u,\\
\frac{v}{2}\mathbf{l}+(v+2u-3)\mathbf{l}_{24}+(u+v-2)\big(\mathbf{l}_{12}+\mathbf{l}_{23}\big)&\quad\text{ if } 2-u\leqslant v\leqslant\frac{10-6u}{3}.
\end{cases}
$$
Likewise, if $\frac{4}{3}\leqslant u \leqslant \frac{3}{2}$ and $0\leqslant v \leqslant 3-2u$, then
$$
P(u,v)\sim_{\mathbb{R}}\frac{16-10u-3v}{2}\mathbf{h}-\frac{6-4u-v}{2}\big(\mathbf{e}_1+\mathbf{e}_3\big)-(3-2u)\mathbf{e}_2-\frac{4-2u-v}{2}\mathbf{e}_4
$$
and $N(u,v)=\frac{v}{2}\mathbf{l}$. Finally, if $\frac{4}{3}\leqslant u \leqslant \frac{3}{2}$ and $3-2u\leqslant v\leqslant 6-4u$, then
$$
P(u,v)\sim_{\mathbb{R}}\frac{22-14u-5v}{2}\mathbf{h}-\frac{6-4u-v}{2}\big(\mathbf{e}_1+2\mathbf{e}_2+\mathbf{e}_3\big)-\frac{10-6u-3v}{2}\mathbf{e}_4
$$
and $N(u,v)=\frac{v}{2}\mathbf{l}+(v+2u-3)\mathbf{l}_{24}$.

If $1\leqslant u\leqslant\frac{3}{2}$, then $\mathscr{C}\subset\mathrm{Supp}(N(u))$ and $\mathrm{ord}_{\mathscr{C}}(N(u)\vert_{\mathscr{S}})=(u-1)$.
Thus, we have
\begin{multline*}
\quad \quad \quad \quad S\big(W^\mathscr{S}_{\bullet,\bullet};\mathscr{C}\big)=\frac{3}{28}\int\limits_{1}^{\frac{3}{2}}(12u^2-40u+33)(u-1)du+\\
+\frac{3}{28}\int\limits_{0}^{1}\int\limits_{0}^{1-u}u^2+2uv+v^2-8u-8v+12dvdu+\frac{3}{28}\int\limits_{0}^{1}\int\limits_{1-u}^{2-u}\frac{3u^2+6uv+3v^2-18u-18v+25}{2}dvdu+\\
+\frac{3}{28}\int\limits_{0}^{1}\int\limits_{2-u}^{\frac{7-3u}{3}}\frac{(7-3u-3v)^2}{2}dvdu+\frac{3}{28}\int\limits_{1}^{\frac{4}{3}}\int\limits_{0}^{3-2u}\frac{24u^2+20uv+3v^2-80u-32v+66}{2}dvdu+\\
+\frac{3}{28}\int\limits_{1}^{\frac{4}{3}}\int\limits_{3-2u}^{2-u}\frac{(14-8u-5v)(6-4u-v)}{2}dvdu+\frac{3}{28}\int\limits_{1}^{\frac{4}{3}}\int\limits_{2-u}^{\frac{10-6u}{3}}\frac{(10-6u-3v)^2}{2}dvdu+\\
+\frac{3}{28}\int\limits_{\frac{4}{3}}^{\frac{3}{2}}\int\limits_{0}^{3-2u}\frac{24u^2+20uv+3v^2-80u-32v+66}{2}dvdu+\frac{3}{28}\int\limits_{\frac{4}{3}}^{\frac{3}{2}}\int\limits_{3-2u}^{6-4u}\frac{(14-8u-5v)(6-4u-v)}{2}dvdu.
\end{multline*}
This gives $S(W^\mathscr{S}_{\bullet,\bullet};\mathscr{C})=\frac{2885}{4032}<1$.
Then $\beta(\mathbf{F})>0$ by \eqref{equation:Kento-curve}, since $S_Y(\mathscr{S})=\frac{227}{448}<1$.
\end{proof}

This finishes the proof that $X$ is K-polystable.

\section{K-polystability of the Fano 3-fold $X_{\infty}$ in Family \ref{example:4-13}}
\label{section:4-13}

Let $X=X_{\infty}$ be the 3-fold described in Family \ref{example:4-13}, and use the notation introduced in Section~\ref{section:Abban-Zhuang}.
Then $\pi\colon X\to (\mathbb{P}^1)^3$ is the blowup of
$$
C_\infty=\big\{x_0y_1-x_1y_0=0, x_0x_1(x_0z_1-x_1z_0)=0\big\}\subset\big(\mathbb{P}^1\big)^3,
$$
where $([x_0:x_1],[y_0:y_1],[z_0:z_1])$ are coordinates on $(\mathbb{P}^1)^3$.
Note that $C_{\infty}=C+L_1+L_2$, where
$C=\{x_0y_1-x_1y_0=0,x_0z_1-x_1z_0=0\}$, $L_1=\{x_0=0,y_0=0\}$, $L_2=\{x_1=0,y_1=0\}$.

\paragraph*{\bf Description of the automorphism group}
Let us consider the automorphisms $\iota_1$, $\iota_2$, $\tau_\lambda$ of $(\mathbb{P}^1)^3$ defined by
\begin{align*}
\iota_1\colon \big([x_0:x_1],[y_0:y_1],[z_0:z_1]\big)&\mapsto\big([x_1:x_0],[y_1:y_0],[z_1:z_0]\big),\\
\iota_2\colon\big([x_0:x_1],[y_0:y_1],[z_0:z_1]\big)&\mapsto\big([y_0:y_1],[x_0:x_1],[z_0:z_1]\big),\\
\tau_{\lambda}\colon \big([x_0:x_1],[y_0:y_1],[z_0:z_1]\big)&\mapsto\big([\lambda x_0:x_1],[\lambda y_0:y_1],[\lambda z_0:z_1]),
\end{align*}
where $\lambda\in\mathbb{C}^\ast$. As $\iota_1, \iota_2$ and $\tau_\lambda$ preserve $C_{\infty}$, their actions lift to  $X$,
and we can consider them as automorphisms of  $X$.
The group $G=\mathrm{Aut}(X)$ is generated by these automorphisms, so that
$\mathrm{Aut}(X)\cong(\mathbb{C}^\ast\rtimes\mumu_2)\times\mumu_2$,
because $\iota_2$ lies in the centre of $\mathrm{Aut}(X)$.

We will view $G$ as an infinite subgroup in $\mathrm{Aut}(\mathbb{P}^1\times\mathbb{P}^1\times\mathbb{P}^1)$
generated by automorphisms $\iota_1$, $\iota_2$ and $\tau_\lambda$ for $\lambda\in\mathbb{C}^\ast$.

\paragraph*{\bf Description of the $G$-invariant loci}

Set
$C^\prime=\{x_0y_1-x_1y_0=0, x_0z_1+x_1z_0=0\}\subset(\mathbb{P}^1)^3$.

\begin{lemma}
\label{lemma:4-13-curves} There is no point of $\mathbb{P}^1\times\mathbb{P}^1\times\mathbb{P}^1$ fixed by $G$.
The only $G$-invariant irreducible curves in $\mathbb{P}^1\times\mathbb{P}^1\times\mathbb{P}^1$ are $C$ and $C^\prime$.
\end{lemma}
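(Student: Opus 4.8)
The plan is to mirror the argument used in the proof of Lemma~\ref{lemma:2-22-curves}, with the torus $T=\{\tau_\lambda:\lambda\in\mathbb{C}^\ast\}\cong\mathbb{C}^\ast$ and the involution $\iota_1$ playing the roles of $\Gamma$ and $\tau$ there. First I would record the two elementary facts that drive everything. Since $T$ acts on each factor of $(\mathbb{P}^1)^3$ with exactly the two fixed points $[1:0]$ and $[0:1]$, its fixed locus on $(\mathbb{P}^1)^3$ is the $8$ points $([\epsilon_1],[\epsilon_2],[\epsilon_3])$ with each $\epsilon_i\in\{[1:0],[0:1]\}$; in particular this locus is finite. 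Moreover $\iota_1$ interchanges $[1:0]\leftrightarrow[0:1]$ on every factor, so it fixes none of these $8$ points. As every $G$-fixed point is in particular $T$-fixed, it must be one of these $8$ points, and none is fixed by $\iota_1$; hence $(\mathbb{P}^1)^3$ has no $G$-fixed point, which is the first assertion.

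For the second assertion, let $\mathcal{C}$ be a $G$-invariant irreducible curve. Since the $T$-fixed locus is finite, $\mathcal{C}$ is not pointwise fixed by $T$, so $T$ acts on $\mathcal{C}$ effectively and $\mathcal{C}$ is rational, exactly as in the proof of Lemma~\ref{lemma:2-22-curves}. A direct computation gives $\iota_1\tau_\lambda\iota_1=\tau_{\lambda^{-1}}$, so $\iota_1$ normalises $T$ and restricts to an involution of $\mathcal{C}\cong\mathbb{P}^1$, which therefore fixes a point $P\in\mathcal{C}$. The key observation is that $\mathrm{Fix}(\iota_1)$ in $(\mathbb{P}^1)^3$ is the set of $8$ points $([1:\epsilon_1],[1:\epsilon_2],[1:\epsilon_3])$ with $\epsilon_i\in\{1,-1\}$, all of which lie in the open torus $(\mathbb{C}^\ast)^3$ and hence are \emph{not} $T$-fixed. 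Consequently $P$ is not $T$-fixed, its $T$-orbit is one-dimensional, and $\mathcal{C}=\overline{\mathrm{Orb}_T(P)}$.

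It then remains to identify these orbit closures and impose $\iota_2$-invariance. Writing $t_i$ for the affine coordinate on the $i$-th factor, $\tau_\lambda$ scales all $t_i$ simultaneously by $\lambda$, so $\overline{\mathrm{Orb}_T(P)}$ depends only on the direction $[t_1(P):t_2(P):t_3(P)]\in\mathbb{P}^2$ and equals $\{x_0y_1=\tfrac{t_1}{t_2}x_1y_0,\ x_0z_1=\tfrac{t_1}{t_3}x_1z_0\}$. For $P\in\mathrm{Fix}(\iota_1)$ this direction is $[\epsilon_1:\epsilon_2:\epsilon_3]$ with $\epsilon_i=\pm1$, i.e. one of the four directions $[1:\pm1:\pm1]$. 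Since $\iota_2$ swaps the first two factors, it sends the orbit closure of direction $[\epsilon_1:\epsilon_2:\epsilon_3]$ to that of $[\epsilon_2:\epsilon_1:\epsilon_3]$, so $\iota_2$-invariance of $\mathcal{C}$ forces $\epsilon_1=\epsilon_2$. The two surviving directions $[1:1:1]$ and $[1:1:-1]$ give precisely $C=\{x_0y_1-x_1y_0=0,\,x_0z_1-x_1z_0=0\}$ and $C^\prime=\{x_0y_1-x_1y_0=0,\,x_0z_1+x_1z_0=0\}$, and both are immediately checked to be $G$-invariant. I expect the only delicate point to be the middle step: one must verify that the $\iota_1$-fixed points forced onto $\mathcal{C}$ automatically lie in the open torus, since this is what simultaneously guarantees that $\mathcal{C}$ is a genuine one-dimensional orbit closure and pins its direction down to $[\pm1:\pm1:\pm1]$. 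Everything after that is bookkeeping on directions in $\mathbb{P}^2$.
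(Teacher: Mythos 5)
Your proposal is correct and takes essentially the same route as the paper, whose proof of Lemma~\ref{lemma:4-13-curves} simply refers back to the argument of Lemma~\ref{lemma:2-22-curves}: a $G$-invariant irreducible curve is not pointwise $T$-fixed, hence rational, the involution $\iota_1$ must fix a point on it, and since all $\iota_1$-fixed points lie in the open torus the curve is the closure of the $T$-orbit of that point. Your explicit bookkeeping with the directions $[\pm 1:\pm 1:\pm 1]$ and the $\iota_2$-invariance constraint correctly fills in the details the paper leaves to the reader.
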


\begin{proof}
See the proof of Lemma~\ref{lemma:2-22-curves} or the proof of \cite[Lemma 5.112]{Book}.
\end{proof}

Let $R_{x,y}=\{x_0y_1-x_1y_0=0\}$, $R_{x,z}=\{x_0z_1-x_1z_0=0\}$ and $R_{y,z}=\{y_0z_1-y_1z_0=0\}$.
Then $R_{x,y}$ and $R_{x,z}+R_{y,z}$ are $G$-invariant and $G$-irreducible,
and $C=R_{x,y}\cap R_{x,z}\cap R_{y,z}$.
Let $\varphi\colon W\to (\mathbb{P}^1)^3$ be the blowup of the curve $C$.
Then $W$ is a smooth Fano 3-fold in the Mori-Mukai Family \textnumero 4.6,
and fits into a  $G$-equivariant commutative diagram:
\begin{equation}
\label{equation:4-13-diagram}
\xymatrix{
&W\ar[dl]_\varphi\ar[dr]\ar[dr]^\varpi&\\
\mathbb{P}^1\times\mathbb{P}^1\times\mathbb{P}^1\ar@{-->}[rr]_{\chi}&&\mathbb{P}^3}
\end{equation}
where $\chi$ is a birational map that is given by
$$\hspace*{-0.3cm}
\big([x_0:x_1],[y_0:y_1],[z_0:z_1]\big)\mapsto\big[x_0y_0z_1-x_0y_1z_0:x_0y_0z_1-x_1y_0z_0:x_0y_1z_1-x_1y_0z_1:x_0y_1z_1-x_1y_1z_0\big]
$$
and $\varpi$ contracts the proper transforms of
$R_{x,y}$, $R_{x,z}$, $R_{y,z}$ to disjoint lines $\{x-y=0,z=0\}$, $\{y=0,t=0\}$, $\{x=0,z-t=0\}$, respectively,
where $[x:y:z:t]$ are coordinates on $\mathbb{P}^3$.

Let $\delta\colon \overline{X}\to W$ be the blowup of the proper transforms of the curves $L_1$ and $L_2$,
let~$\gamma\colon V\to (\mathbb{P}^1)^3$ be the blowup of  $L_1$ and $L_2$,
and let $\phi\colon\widetilde{X}\to V$ be the blowup of the proper transform of  $C$.
Then we have a $G$-equivariant commutative diagram
\begin{equation}
\label{equation:4-13-big-diagram}
\xymatrix{
\widetilde{X}\ar[rr]\ar[d]_{\phi} && X\ar[d]_{\pi} && \overline{X}\ar[ll]\ar[d]^{\delta}\\
V \ar[rr]_{\gamma} && \mathbb{P}^1\times\mathbb{P}^1\times\mathbb{P}^1 && W\ar[ll]^\varphi}
\end{equation}
where $\widetilde{X}\to X$ and $\overline{X}\to X$ are  $G$-equivariant small resolutions of the 3-fold~$X$.
Recall from Section~\ref{section:Abban-Zhuang} that either $Y=\widetilde{X}$ or $Y=\overline{X}$.

Let $E$, $F_1$, $F_2$ be the $\pi$-exceptional surfaces such that $\pi(E)=C$, $\pi(F_1)=L_1$,~\mbox{$\pi(F_2)=L_2$},
let $H_{x}$, $H_{y}$, $H_{z}$ be general fibres of the fibrations $\mathrm{pr}_x\circ\pi$,  $\mathrm{pr}_y\circ\pi$, $\mathrm{pr}_z\circ\pi$,
respectively, where $\mathrm{pr}_x$, $\mathrm{pr}_y$, $\mathrm{pr}_z$ are projections
of $(\mathbb{P}^1)^3$ to the first, second, third factor, respectively.
Let us denote by symbols $\overline{E}$, $\overline{F}_1$, $\overline{F}_2$, $\overline{R}_{x,y}$, $\overline{R}_{x,z}$, $\overline{R}_{y,z}$,
$\overline{H}_{x}$, $\overline{H}_{y}$, $\overline{H}_{z}$
the proper transforms of  ${E}$, ${F}_1$, ${F}_2$, ${R}_{x,y}$, ${R}_{x,z}$, ${R}_{y,z}$,
${H}_{x}$, ${H}_{y}$, ${H}_{z}$  on $\overline{X}$.
Then
$$
-K_{\overline{X}}\sim 2\overline{R}_{x,y}+\overline{E}+\overline{F}_1+\overline{F}_2+2\overline{H}_{z}
\sim \overline{R}_{x,y}+\overline{R}_{x,z}+\overline{R}_{y,z}+2\overline{E}.
$$
Note that the divisors
$\overline{E}$, $\overline{F}_1+\overline{F}_2$, $\overline{R}_{x,y}$, $\overline{R}_{x,z}+\overline{R}_{y,z}$ are $G$-invariant and $G$-irreducible,
but the pencils $|\overline{H}_{x}|$, $|\overline{H}_{y}|$, $|\overline{H}_{z}|$ do not contain $G$-invariant members.

We now prove that $X$ is K-polystable.
Suppose that  $X$ is not K-polystable.
By \cite[Corollary~4.14]{Zhuang}, there exists a $G$-invariant prime divisor $\mathbf{F}$ over $X$ such that
\begin{equation}
\label{equation:4-13-beta}
\beta(\mathbf{F})=A_{X}\big(\mathbf{F}\big)-S_X\big(\mathbf{F}\big)\leqslant 0.
\end{equation}
Let $Z$ and $\overline{Z}$ be its centres on $X$ and $\overline{X}$, respectively.
WE first consider the case when $Z$ is a divisor.

\begin{lemma}
\label{lemma:4-13-divisorial}
Let $S$ be a $G$-invariant prime divisor in $\overline{X}$. Then $\beta(S)>0$.
\end{lemma}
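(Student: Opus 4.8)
The plan is to argue exactly as in Lemmas~\ref{lemma:2-22-divisorial} and \ref{lemma:3-12-divisorial}: work on $Y=\overline X$, exhibit a short list of $G$-invariant effective generators with $S_{\overline X}<1$, and invoke the sharpened form of Remark~\ref{remark:Kento-divisorial}. Recall
\[
-K_{\overline X}\sim \overline R_{x,y}+\overline R_{x,z}+\overline R_{y,z}+2\overline E\sim 2\overline R_{x,y}+\overline E+\overline F_1+\overline F_2+2\overline H_z ,
\]
so both $\overline E$ and $\overline R_{x,y}$ occur in $-K_{\overline X}$ with coefficient $2>\tfrac43$. By Remark~\ref{remark:Kento-divisorial} it then suffices to (i) determine all $G$-invariant prime divisors $S\subset\overline X$ admitting $-K_{\overline X}\sim_{\mathbb Q}\lambda S+\Delta$ with $\lambda>\tfrac43$ and $\Delta\ge 0$ effective, and (ii) verify $S_{\overline X}(S)<1$ for each of them.

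For (i), the analogue of Lemma~\ref{lemma:3-12-E2-EL-Q}, note first that $\overline E$ is the only $G$-invariant $\pi$-exceptional divisor, since $G$ interchanges $\overline F_1$ and $\overline F_2$. If $S$ is not exceptional, then $\varphi\circ\delta(S)$ is a $G$-invariant surface in $(\mathbb P^1)^3$; as $\iota_2$ swaps the first two factors its class has multidegree $(a,a,c)$, and pushing $\Delta$ forward forces $2-\lambda a\ge 0$ and $2-\lambda c\ge 0$, whence $a\le 1$ and $c\le 1$. Since $|\overline H_z|$ has no $G$-invariant member the multidegree $(0,0,1)$ is impossible, and a short eigenvector computation for the $\tau_\lambda$-action combined with $\iota_1$ shows there is no $G$-invariant surface of multidegree $(1,1,1)$; the only possibilities are therefore the two multidegree-$(1,1,0)$ surfaces $R_{x,y}=\{x_0y_1-x_1y_0=0\}$ and $R'=\{x_0y_1+x_1y_0=0\}$. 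The surface $R'$ contains $L_1$ and $L_2$ but not $C$, so $\overline{R'}\sim\overline H_x+\overline H_y-\overline F_1-\overline F_2$, and one checks
\[
-K_{\overline X}-\tfrac32\overline{R'}\sim \tfrac12\big(\overline R_{x,z}+\overline R_{y,z}\big)+\overline H_z+\tfrac12\big(\overline F_1+\overline F_2\big)\ge 0 ,
\]
so the value $\lambda=\tfrac32>\tfrac43$ is attained and $\overline{R'}$ \emph{cannot} be discarded. Thus the generators are exactly $\overline E$, $\overline R_{x,y}$ and $\overline{R'}$; each coincides with a generator, so the combinatorial hypothesis of Remark~\ref{remark:Kento-divisorial} holds trivially.

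For (ii) I would compute the three $S_{\overline X}$-values by Zariski decomposition on $\overline X$, as in the $\overline E$-case of Lemma~\ref{lemma:2-22-divisorial}. For $\mathscr S=\overline E$ one has $-K_{\overline X}-u\mathscr S\sim_{\mathbb R}(2-u)\mathscr S+\overline R_{x,y}+\overline R_{x,z}+\overline R_{y,z}$, hence $\tau=2$, with $N(u)=0$ on $[0,1]$ and $N(u)=(u-1)(\overline R_{x,y}+\overline R_{x,z}+\overline R_{y,z})$ on $[1,2]$. For $\mathscr S=\overline R_{x,y}$ one has $-K_{\overline X}-u\mathscr S\sim_{\mathbb R}(2-u)\mathscr S+\overline E+\overline F_1+\overline F_2+2\overline H_z$, again with $\tau=2$. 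For $\mathscr S=\overline{R'}$ the decomposition
\[
-K_{\overline X}-u\overline{R'}\sim_{\mathbb R}\tfrac12\big(\overline R_{x,z}+\overline R_{y,z}\big)+\big(\tfrac32-u\big)\big(\overline H_x+\overline H_y\big)+\overline H_z+(u-1)\big(\overline F_1+\overline F_2\big)
\]
on $[1,\tfrac32]$, with a three-term positive part on $[0,1]$, gives $\tau=\tfrac32$. In each case I feed the intersection numbers recorded for $\overline X$ into $(P(u))^3$ and integrate against $\tfrac1{(-K_X)^3}$; I expect all three values to be $<1$, which by Remark~\ref{remark:Kento-divisorial} yields $\beta(S)>0$ for every $G$-invariant prime divisor $S$.

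The main obstacle is twofold. First, one must not overlook the third generator $\overline{R'}$: a naive reading stops at $\overline E$ and $\overline R_{x,y}$ (the divisors visibly occurring in $-K_{\overline X}$), yet $\overline{R'}$ also crosses the $\tfrac43$ threshold and must be treated, and its $S_{\overline X}$-value is the least transparent of the three because $\tau=\tfrac32<2$ forces a more intricate decomposition. Second, each computation rests on pinning down $\tau$ and the negative part $N(u)$ precisely on the small resolution, which amounts to verifying that the relevant boundary classes (for instance $\overline R_{x,y}+\overline R_{x,z}+\overline R_{y,z}$, $\overline E+\overline F_1+\overline F_2+2\overline H_z$, and the $u=\tfrac32$ class above) fail to be big and that each proposed positive part is genuinely nef; this requires controlling the nef cone of $\overline X$ through the diagram~\eqref{equation:4-13-big-diagram}, where the two small flops relate $\widetilde X$ and $\overline X$ (and where one may pass to $\widetilde X$ should a decomposition be cleaner there). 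Step (i), by contrast, is routine bookkeeping once these test computations and test curves are in place.
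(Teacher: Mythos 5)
Your proposal is correct and, for the two computations that carry the weight, identical to the paper's: the paper also works on $Y=\overline{X}$, gets $\tau=2$ for both $\overline{E}$ and $\overline{R}_{x,y}$ with exactly your negative parts ($N(u)=(u-1)(\overline{R}_{x,y}+\overline{R}_{x,z}+\overline{R}_{y,z})$ on $[1,2]$ for $\overline{E}$, and $N(u)=(u-1)(\overline{E}+\overline{F}_1+\overline{F}_2)$ for $\overline{R}_{x,y}$), and integrates to $S_{\overline{X}}(\overline{E})=\frac{35}{52}$ and $S_{\overline{X}}(\overline{R}_{x,y})=\frac{49}{52}$, confirming your expectation that both values are below $1$ (the latter only barely). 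The genuine divergence is your third divisor. The paper's reduction is sharper on its face: via \eqref{equation:KentoKyoto} it observes that $\beta(S)\leqslant 0$ would make $-K_{\overline{X}}-S$ big, and then asserts, arguing as in \cite[Lemma~5.113]{Book}, that bigness occurs only for $S=\overline{R}_{x,y}$ or $S=\overline{E}$. You are right to be suspicious of stopping at two divisors: $R'=\{x_0y_1+x_1y_0=0\}$ is indeed $G$-invariant, contains $L_1$ and $L_2$ but not $C$, its proper transform has class $\overline{H}_x+\overline{H}_y-\overline{F}_1-\overline{F}_2$, and your effective decomposition of $-K_{\overline{X}}-\frac{3}{2}\overline{R'}$ checks out, so $\overline{R'}$ does cross the $\frac{4}{3}$-threshold (and $-K_{\overline{X}}-\overline{R'}$ is in fact big, so the paper's literal two-divisor claim is too strong as stated). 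Where your plan overshoots is in treating $\overline{R'}$ as a new generator requiring its own Zariski-decomposition analysis — the step you flag as the least transparent and the main obstacle. It is not needed: one has the linear equivalence $\overline{R'}\sim\overline{R}_{x,y}+\overline{E}$ (both classes equal $\overline{H}_x+\overline{H}_y-\overline{F}_1-\overline{F}_2$), so $\overline{R'}$ lies in the monoid generated by the two divisors already treated, which is exactly the condition Remark~\ref{remark:Kento-divisorial} asks for; concretely, monotonicity of volumes gives
$$
\mathrm{vol}\big(-K_{\overline{X}}-u\overline{R'}\big)=\mathrm{vol}\big(-K_{\overline{X}}-u\overline{R}_{x,y}-u\overline{E}\big)\leqslant\mathrm{vol}\big(-K_{\overline{X}}-u\overline{E}\big),
$$
hence $S_{\overline{X}}(\overline{R'})\leqslant S_{\overline{X}}(\overline{E})=\frac{35}{52}<1$ with no new integral, and your worries about pinning down $\tau(\overline{R'})$, the negative part, and the nef cone of $\overline{X}$ for that case all evaporate. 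With that shortcut your argument closes cleanly: your step (i) classification is correct (no $G$-invariant member of $|\overline{H}_z|$, no $G$-invariant $(1,1,1)$-surface by the $\tau_\lambda$-eigenweight versus $\iota_1$ argument, and exactly $R_{x,y}$ and $R'$ in multidegree $(1,1,0)$), and in fact it supplies the classification detail that the paper outsources to \cite[Lemma~5.113]{Book} — at the modest price of one redundant generator.
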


\begin{proof}
If $\beta(S)\leqslant 0$, then the divisor $-K_{\overline{S}}-S$ is big by \eqref{equation:KentoKyoto}.
On the other hand, arguing as in the proof of \cite[Lemma~5.113]{Book},
we see that $-K_{\overline{S}}-S$ is big only if $S=\overline{R}_{x,y}$ or $S=\overline{E}$.
Thus, to complete the proof, it is enough to show that $\beta(\overline{R}_{x,y})>0$ and $\beta(\overline{E})>0$.

Set $Y=\overline{X}$ and $\mathscr{S}=\overline{R}_{x,y}$.
Then $\tau=2$. Arguing as in the proof of \cite[Lemma 5.114]{Book}, we see that
$N(u)=0$ for $u\in[0,1]$, and $N(u)=(u-1)(\overline{E}+\overline{F}_1+\overline{F}_1)$ for $u\in[1,2]$.
Then
$$
P(u)\sim_{\mathbb{R}}\begin{cases}
(2-u)\overline{R}_{x,y}+\overline{E}+\overline{F}_1+\overline{F}_2+2\overline{H}_{z}  &\quad \text{if } 0\leqslant u\leqslant 1, \\
(2-u)\big(\overline{H}_x+\overline{H}_y\big)+2\overline{H}_z  &\quad \text{if } 1\leqslant u\leqslant 2.
\end{cases}
$$
Now, a direct computation shows that
$$
S_{Y}\big(\overline{R}_{x,y}\big)=\frac{1}{26}\int\limits_{0}^126-2u^3-6u^2-6udu+\frac{1}{26}\int\limits_{1}^{2}12(2-u)^2du=\frac{49}{52},
$$
so that  $\beta(\overline{R}_{x,y})=1-S_{Y}(\overline{R}_{x,y})=\frac{3}{52}>0$.

Now, we compute $\beta(\overline{E})$.
We set $Y=\overline{X}$ and $\mathscr{S}=\overline{E}$.
Then it follows from \eqref{equation:4-13-diagram} that the divisor $\overline{R}_{x,y}+\overline{R}_{x,z}+\overline{R}_{y,z}$ is not big,
which implies $\tau=2$. Moreover, we have
$$
P(u)\sim_{\mathbb{R}}\begin{cases}
(2-u)\overline{E}+\overline{R}_{x,y}+\overline{R}_{x,z}+\overline{R}_{y,z}&\quad \text{if } 0\leqslant u\leqslant 1, \\
(2-u)\big(\overline{E}+\overline{R}_{x,y}+\overline{R}_{x,z}+\overline{R}_{y,z}\big) &\quad \text{if } 1\leqslant u\leqslant 2,
\end{cases}
$$
and
$$
N(u)=\begin{cases}
   0  &\quad \text{if } 0\leqslant u\leqslant 1, \\
  (u-1)\big(\overline{R}_{x,y}+\overline{R}_{x,z}+\overline{R}_{y,z}\big) &\quad \text{if } 1\leqslant u\leqslant 2.
\end{cases}
$$
Now, we compute
$$
S_{Y}(\overline{E})=\frac{1}{26}\int\limits_{0}^14u^3-6u^2-18u+26du+\frac{1}{26}\int\limits_{1}^{2}6(2-u)^3du=\frac{35}{52},
$$
so that $\beta(\overline{E})=1-S_{Y}(\overline{E})=\frac{17}{52}$. This completes the proof of the lemma.
\end{proof}

We now consider the case when $Z$ and $\overline{Z}$  are small.  By Lemmas~\ref{lemma:4-13-curves} , we only need to consider curves, and
either $\pi(Z)=C$ and $\overline{Z}\subset\overline{E}$,
or $\pi(Z)=C^\prime$ and $\overline{Z}\subset\overline{R}_{x,y}$.

\begin{lemma}
\label{lemma:4-13-Z-in-E}
The curve $\overline{Z}$ is not contained in the surface $\overline{R}_{x,y}$.
\end{lemma}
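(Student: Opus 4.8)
The plan is to argue by contradiction: suppose $\overline{Z}\subset\overline{R}_{x,y}$, and apply the Abban--Zhuang estimate \eqref{equation:Kento-curve} with $Y=\overline{X}$, $\mathscr{S}=\overline{R}_{x,y}$ and $\mathscr{C}=\overline{Z}$ to force $\beta(\mathbf{F})>0$, contradicting \eqref{equation:4-13-beta}. Since $\mathbf{F}$ is $G$-invariant, $\overline{Z}$ is a $G$-invariant irreducible curve, and by Lemma \ref{lemma:4-13-curves} it maps to $C$ or $C'$. So the first step is to understand the surface $\overline{R}_{x,y}$ and its $G$-invariant curves. Because $R_{x,y}=\{x_0y_1=x_1y_0\}$ is the diagonal copy of $\mathbb{P}^1\times\mathbb{P}^1$ (the $x{=}y$ line times the $z$-line) and the three blown-up centres $C,L_1,L_2$ are all divisors on it, the proper transform $\overline{R}_{x,y}$ is again $\mathbb{P}^1\times\mathbb{P}^1$. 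Writing $e_1,e_2$ for the two rulings, I would record $\overline{E}|_{\overline{R}_{x,y}}=C\sim e_1+e_2$, $\overline{F}_i|_{\overline{R}_{x,y}}=L_i\sim e_2$ and $\overline{H}_z|_{\overline{R}_{x,y}}\sim e_1$, and check that $G$ acts through the diagonal $\mathbb{C}^\ast\rtimes\mumu_2$, so that the only $G$-invariant irreducible curves are $C=\overline{E}\cap\overline{R}_{x,y}$ and the proper transform of $C'$, both of class $e_1+e_2$.

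Next I would import the Zariski decomposition of $-K_{\overline{X}}-u\overline{R}_{x,y}$ already computed in Lemma \ref{lemma:4-13-divisorial}, namely $\tau=2$, with $N(u)=0$ on $[0,1]$ and $N(u)=(u-1)(\overline{E}+\overline{F}_1+\overline{F}_2)$ on $[1,2]$, and restrict it to $\mathscr{S}$. Using $\overline{R}_{x,y}|_{\overline{R}_{x,y}}\sim -e_1-e_2$ (from $\overline{R}_{x,y}\sim\overline{H}_x+\overline{H}_y-\overline{E}-\overline{F}_1-\overline{F}_2$, equivalently by adjunction since $-K_{\overline{X}}|_{\mathscr{S}}\sim e_1+e_2$), this gives the clean formulas $P(u)|_{\mathscr{S}}\sim(1+u)(e_1+e_2)$ for $u\in[0,1]$ and $P(u)|_{\mathscr{S}}\sim 2e_1+2(2-u)e_2$ for $u\in[1,2]$, together with $N(u)|_{\mathscr{S}}=(u-1)(C+L_1+L_2)$ on $[1,2]$. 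Because $\mathscr{C}$ is nef of class $e_1+e_2$ on $\mathbb{P}^1\times\mathbb{P}^1$, the inner decomposition is immediate: $P(u)|_{\mathscr{S}}-v\mathscr{C}$ stays nef up to $t(u)=1+u$ (resp.\ $t(u)=4-2u$) with $N(u,v)=0$ throughout, so $\big(P(u,v)\big)^2=2(1+u-v)^2$ (resp.\ $2(2-v)(4-2u-v)$). This is exactly the step where the $\mathbb{P}^1\times\mathbb{P}^1$ geometry makes the computation transparent and avoids the chamber bookkeeping needed in the earlier lemmas.

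Finally I would evaluate $S(W^{\mathscr{S}}_{\bullet,\bullet};\mathscr{C})$ with $(-K_X)^3=26$. The multiplicity $d(u)=\mathrm{ord}_{\mathscr{C}}(N(u)|_{\mathscr{S}})$ vanishes when $\mathscr{C}$ is the proper transform of $C'$, and equals $u-1$ on $[1,2]$ when $\mathscr{C}=C$; the double integral of $\big(P(u,v)\big)^2$ evaluates to $\tfrac{9}{2}$. Hence $S(W^{\mathscr{S}}_{\bullet,\bullet};\mathscr{C})=\tfrac{3}{26}\cdot\tfrac{9}{2}=\tfrac{27}{52}$ in the $C'$ case and $\tfrac{3}{26}\big(\tfrac{4}{3}+\tfrac{9}{2}\big)=\tfrac{35}{52}$ in the $C$ case, both $<1$. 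Since also $S_Y(\mathscr{S})=\tfrac{49}{52}<1$ by Lemma \ref{lemma:4-13-divisorial}, inequality \eqref{equation:Kento-curve} gives $\beta(\mathbf{F})>0$, contradicting \eqref{equation:4-13-beta}; therefore $\overline{Z}\not\subset\overline{R}_{x,y}$. I expect the only genuine obstacle to be pinning down the surface structure $\overline{R}_{x,y}\cong\mathbb{P}^1\times\mathbb{P}^1$ and the normal-bundle class $\overline{R}_{x,y}|_{\overline{R}_{x,y}}\sim -e_1-e_2$; once these are verified, the restrictions, the inner Zariski decomposition, and the integration are all routine.
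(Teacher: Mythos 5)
Your proposal is correct and follows essentially the same route as the paper: the same flag $\mathscr{S}=\overline{R}_{x,y}$, $\mathscr{C}=\overline{Z}$ on $Y=\overline{X}$, the same Zariski decomposition imported from Lemma~\ref{lemma:4-13-divisorial}, the same $t(u)$ and vanishing $N(u,v)$, and the same final values $S(W^{\mathscr{S}}_{\bullet,\bullet};\mathscr{C})=\tfrac{35}{52}$ (for $\overline{C}_{x,y}$, with $d(u)=u-1$ on $[1,2]$) and $\tfrac{27}{52}$ (for $\overline{C}^\prime$), contradicting \eqref{equation:4-13-beta} via \eqref{equation:Kento-curve} since $S_Y(\mathscr{S})=\tfrac{49}{52}<1$. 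Your explicit verification of $\overline{R}_{x,y}\cong\mathbb{P}^1\times\mathbb{P}^1$, the restriction classes, and $\overline{R}_{x,y}\vert_{\overline{R}_{x,y}}\sim -e_1-e_2$ is sound, and in fact your coefficient $(4-2u-v)$ on the ruling contracted by $\mathrm{pr}_z$-complementary projections is the correct one (the paper's displayed $(4-2u-3v)$ is a typo, as its own integrand $2(4-2u-v)(2-v)$ confirms).
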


\begin{proof}
The proof is very similar to the proof of \cite[Lemma 5.114]{Book}.
Suppose that $\overline{Z}\subset\overline{R}_{x,y}$.
Let $\overline{C}_{x,y}=\overline{E}\cap\overline{R}_{x,y}$,
and let $\overline{C}^\prime$ be the proper transform on the 3-fold $\overline{X}$ of the curve $C^\prime$.
Then it follows from Lemma~\ref{lemma:4-13-curves} that either $\overline{Z}=\overline{C}_{x,y}$ or $\overline{Z}=\overline{C}^\prime$.

Set $Y=\overline{X}$, $\mathscr{S}=\overline{R}_{x,y}$, $\mathscr{C}=\overline{Z}$.
Recall from the proof of Lemma~\ref{lemma:4-13-divisorial} that $S_{Y}(\mathscr{S})=\frac{49}{52}$.
Thus, it follows from \eqref{equation:Kento-curve} that $S(W^\mathscr{S}_{\bullet,\bullet};\mathscr{C})\geqslant 1$.
Let us compute $S(W^\mathscr{S}_{\bullet,\bullet};\mathscr{C})$.

Let $\ell_1$ and $\ell_2$ be the rulings of the surface $\mathscr{S}\cong\mathbb{P}^1\times\mathbb{P}^1$ such that $\mathrm{pr}_{x}\circ\pi$ and $\mathrm{pr}_{y}\circ\pi$ contract $\ell_1$, and $\mathrm{pr}_{z}\circ\pi$ contracts $\ell_2$.
Then it follows from the proof of Lemma~\ref{lemma:4-13-divisorial} that
$$
P(u)\big\vert_{\mathscr{S}}-v\mathscr{C}\sim_{\mathbb{R}}\begin{cases}
(1+u-v)\ell_1+(1+u-v)\ell_2  &\quad  0\leqslant u\leqslant 1, \\
(4-2u-3v)\ell_1+(2-v)\ell_2 &\quad  1\leqslant u\leqslant 2,
\end{cases}
$$
and
$$
N(u)\big\vert_{\mathscr{S}}=\begin{cases}
0    &\quad  0\leqslant u\leqslant 1, \\
(u-1)\big(\overline{C}_{x,y}+\overline{f}_1+\overline{f}_2\big)  &\quad  1\leqslant u\leqslant 2,
\end{cases}
$$
where $\overline{f}_1=\overline{F}_1\vert_{\mathscr{S}}$ and $\overline{f}_2=\overline{F}_2\vert_{\mathscr{S}}$.
Thus, if $0\leqslant u\leqslant 1$, then $t(u)=1+u$.
Similarly, if $1\leqslant u\leqslant 2$, then $t(u)=4-2u$.
We have $P(u,v)\sim_{\mathbb{R}} P(u)\big\vert_{\mathscr{S}}-v\mathscr{C}$ for every $v\in[0,t(u)]$.
Hence, if $\overline{Z}=\overline{C}_{x,y}$, then $S(W^\mathscr{S}_{\bullet,\bullet};\mathscr{C})$
can be computed as follows:
$$
\frac{3}{26}\int\limits_{1}^{2}(16-8u)(u-1)du+\frac{3}{26}\int\limits_{0}^{1}\int\limits_{0}^{1+u}2(1+u-v)^2dvdu+\frac{3}{26}\int\limits_{1}^{2}\int\limits_{0}^{4-2u}2(4-2u-v)(2-v)dvdu=\frac{35}{52}.
$$
Similarly, if $\overline{Z}=\overline{C}^\prime$, then $S(W^\mathscr{S}_{\bullet,\bullet};\mathscr{C})=\frac{27}{52}<1$.
This is a contradiction.
\end{proof}

Thus, we see that $\pi(Z)=C$, $\overline{Z}\subset\overline{E}$ and $\overline{Z}\ne\overline{E}\cap\overline{R}_{x,y}$.
Observe that $\delta\big(\overline{E}\big)\cong\mathbb{P}^1\times\mathbb{P}^1$.
As in the proof of Lemma~\ref{lemma:4-13-Z-in-E}, let $\overline{C}_{x,y}=\overline{E}\cap\overline{R}_{x,y}$.
Set $\overline{f}_1=\overline{F}_1\cap \overline{E}$ and $\overline{f}_2=\overline{F}_1\cap \overline{E}$.
Then~$\delta$ in \eqref{equation:4-13-big-diagram} induces~a~birational morphism $\overline{E}\to\delta(\overline{E})$ that contracts
$\overline{f}_1$ and $\overline{f}_2$ to two distinct points on the curve $\delta(\overline{C}_{x,y})$, which is a section the natural projection $\delta(\overline{E})\to C$.
Denote by~$\ell_1$ and $\ell_2$ the proper transforms on  $\overline{E}$ of the fibres of this projection that pass through the points $\delta(\overline{f}_1)$ and  $\delta(\overline{f}_2)$, respectively.
Note that $\overline{E}$ is a weak del Pezzo surface,
the curves $\overline{f}_1$, $\overline{f}_2$, $\ell_1$, $\ell_2$ are all $(-1)$-curves in $\overline{E}$,
and $\overline{C}_{x,y}$ is the only $(-2)$-curve in $\overline{E}$.
By \cite[Proposition~8.3]{Coray1988}, the curves $\overline{C}_{x,y}$, $\overline{f}_1$, $\overline{f}_2$, $\ell_1$, $\ell_2$ generate the Mori cone $\overline{\mathrm{NE}(\overline{E})}$.

Now, set $Y=\overline{X}$, $\mathscr{S}=\overline{E}$, $\mathscr{C}=\overline{Z}$.
Then it follows from the proof of Lemma~\ref{lemma:4-13-divisorial} that
$$
P(u)\big\vert_{\mathscr{S}}-v\mathscr{C}\sim_{\mathbb{R}}\begin{cases}
(1+u-v)\overline{C}+(2-v)\big(\overline{f}_1+\overline{f}_2\big)+(2-u)\big(\ell_1+\ell_2\big)&\quad \text{if } 0\leqslant u\leqslant 1, \\
(4-2u-v)\big(\overline{C}+\overline{f}_1+\overline{f}_2\big)+(2-u)\big(\ell_1+\ell_2\big)&\quad \text{if } 1\leqslant u\leqslant 2,
\end{cases}
$$
and
$$
N(u)\big\vert_{\mathscr{S}}-v\mathscr{C}=\begin{cases}
   0  &\quad \text{if } 0\leqslant u\leqslant 1, \\
  (u-1)\big(\overline{C}_{x,y}+\overline{C}_{x,z}+\overline{C}_{y,z}\big) &\quad \text{if } 1\leqslant u\leqslant 2,
\end{cases}
$$
where $\overline{C}_{x,z}=\overline{E}\cap\overline{R}_{x,z}$ and $\overline{C}_{y,z}=\overline{E}\cap\overline{R}_{y,z}$.
Thus, if $0\leqslant u\leqslant 1$, then $t(u)=1+u$.
Similarly, if $1\leqslant u\leqslant 2$, then $t(u)=4-2u$.
Moreover, if $0\leqslant u\leqslant 1$, then
$$
P(u,v)\sim_{\mathbb{R}}
\begin{cases}
(1+u-v)\overline{C}+(2-v)\big(\overline{f}_1+\overline{f}_2\big)+(2-u)\big(\ell_1+\ell_2\big)&\quad \text{ if } 0\leqslant v \leqslant u,\\
(1+u-v)\overline{C}+(2-v)\big(\overline{f}_1+\overline{f}_2+\ell_1+\ell_2\big)&\quad \text{ if } u\leqslant v \leqslant 1+u,
\end{cases}
$$
and
$$
N(u,v)=
\begin{cases}
0&\quad \text{ if } 0\leqslant v \leqslant u,\\
(v-u)\big(\ell_1+\ell_2\big) &\quad \text{ if } u\leqslant v \leqslant 1+u,
\end{cases}
$$
so that
$$
\big(P(u,v)\big)^2\sim_{\mathbb{R}}
\begin{cases}
7-3u^2+2uv+v^2+6u-10v&\quad \text{ if } 0\leqslant v \leqslant u,\\
(1+u-v)(7-u-3v) &\quad \text{ if } u\leqslant v \leqslant 1+u.
\end{cases}
$$
Furthermore, if $1\leqslant u\leqslant 2$, then
$$
P(u,v)\sim_{\mathbb{R}}
\begin{cases}
(4-2u-v)\big(\overline{C}+\overline{f}_1+\overline{f}_2\big)+(2-u)\big(\ell_1+\ell_2\big)&\quad \text{ if } 0\leqslant v \leqslant 2-u,\\
(4-2u-v)\big(\overline{C}+\overline{f}_1+\overline{f}_2+\ell_1+\ell_2\big) &\quad \text{ if } 2-u\leqslant v \leqslant 4-2u,
\end{cases}
$$
and
$$
N(u,v)=
\begin{cases}
0&\quad \text{ if } 0\leqslant v \leqslant 2-u,\\
(v+u-2)\big(\ell_1+\ell_2\big) &\quad \text{ if } 2-u\leqslant v \leqslant 4-2u,
\end{cases}
$$
so that
$$
\big(P(u,v)\big)^2\sim_{\mathbb{R}}
\begin{cases}
10u^2+8uv+v^2-40u-16v+40&\quad \text{ if } 0\leqslant v \leqslant 2-u,\\
3(4-2u-v)^2&\quad \text{ if } 2-u\leqslant v \leqslant 4-2u.
\end{cases}
$$
Note that $\overline{Z}\not\subset\mathrm{Supp}(N(u))$ for $u\in[0,2]$.
Thus, integrating, we get $S(W^\mathscr{S}_{\bullet,\bullet};\mathscr{C})=\frac{87}{104}$.
On the other hand, we already know from the proof of Lemma~\ref{lemma:4-13-divisorial} that $S_{Y}(\mathscr{S})=\frac{35}{52}<1$.
Then $\beta(\mathbf{F})>0$ by \eqref{equation:Kento-curve}, which contradicts \eqref{equation:4-13-beta}.
This shows that $X$ is K-polystable.

\section{Non-toric K-polystable singular  Fano 3-fold in Family \ref{example:3-13}}
\label{section:3-13}

All K-polystable smooth Fano 3-folds in the deformation family \textnumero 3.13 are described in Family \ref{example:3-13}.
This deformation family also contains some interesting members, which are worth mentioning. It contains a unique strictly K-semistable smooth member, whose automorphism group is isomorphic to $\mathbb{G}_a \rtimes \mathfrak{S}_3$ (see \cite[Lemma 5.98]{Book}). Recall that the automorphism group of a K-polystable Fano variety is reductive. The family contains a singular K-polystable toric Fano 3-fold which was already discussed in  Family \ref{example:3-13}. It also contains a non-toric complete intersection in $\mathbb{P}^2\times\mathbb{P}^2\times\mathbb{P}^2$ with one ordinary double point. In Example \ref{example:3-13-ODP-Ga} below we describe this Fano 3-fold. Its geometry deludes at first that it may be the missing K-polystable limit. However, this can be proven wrong as we discuss below.

\begin{example}
\label{example:3-13-ODP-Ga}
Let $Q$ be a smooth quadric 3-fold in $\mathbb{P}^4$, let $\ell_1$, $\ell_2$, $\ell_3$ be three general disjoint lines in $Q$, let $\zeta\colon Q\dasharrow \mathbb{P}^2\times\mathbb{P}^2\times\mathbb{P}^2$ be the rational map given by the product of the linear projections $Q\dasharrow \mathbb{P}^2$ from the lines $\ell_1$, $\ell_2$, $\ell_3$,
and let $X$ be the image of $\zeta$.
Then  $X$ is a singular Fano 3-fold in the deformation family \textnumero 3.13 that has one ordinary double point.
Namely, in suitable coordinates, $X\subset \mathbb P^2\times \mathbb P^2\times \mathbb P^2$ is the complete intersection
$$
\left\{\aligned
&x_0y_0+x_1y_1+x_2y_2-x_0y_2+x_1y_2+x_2y_0-x_2y_1=0,\\
&y_0z_0+y_1z_1+y_2z_2+y_0z_1-y_0z_2-y_1z_0+y_2z_0=0,\\
&x_0z_0+x_1z_1+x_2z_2-x_0z_1+x_1z_0-x_1z_2+x_2z_1=0,
\endaligned
\right.
$$
The ordinary double point is at $([1:0:0],[0:1:0],[0:0:1])$.
The morphism $\theta\colon Y\to Q$ is the blowup of the lines $\ell_1$, $\ell_2$, $\ell_3$, and resolves the indeterminacy of the rational map $\zeta$, fitting in
a commutative diagram
$$
\xymatrix{
&Y\ar@{->}[dl]_{\theta}\ar@{->}[dr]^{\pi}&\\
Q\ar@{-->}[rr]^{\zeta}&&X}.
$$
In the diagram, $\pi$ contracts the strict transform of the unique line in $Q$ that intersects the lines $\ell_1$, $\ell_2$, $\ell_3$ to the singular point of $X$.
Note that after flopping the $\pi$-exceptional curve, we obtain a similar commutative diagram.
Since $\mathrm{rk}\,\mathrm{Pic}(X)=3=\mathrm{rk}\,\mathrm{Cl}(X)-1$, $X$ is maximally non-factorial~\cite{CKMS}.
The automorphism group of $X$ is $\mathbb{G}_a\rtimes \mathfrak{S}_3$,
which is not reductive.
Hence, $X$ is not K-polystable. It can also be verified numerically: if  $\eta\colon V\to Y$ denotes the blowup of the flopping curve, and $E$ the $\eta$-exceptional surface, then $\beta(E)=-\frac{1}{40}$, so $X$ is K-unstable.
If the lines $\ell_1$, $\ell_2$, $\ell_3$ were contained in a hyperplane,
$X$ would have a curve of singularities, its automorphism group would be $\mathrm{PGL}_2(\mathbb{C})\rtimes \mathfrak{S}_3$, and it would still be K-unstable.
\end{example}

\paragraph{\bf The non-toric K-polystable limit}
Let us construct a singular non-toric K-polystable Fano 3-fold,
which admits a smoothing in the deformation family \textnumero 3.13.
Consider the smooth quadric 3-fold
$$
Q=\big\{xy+yz+xz+tw=0\big\}\subset\mathbb{P}^4,
$$
and the smooth conic $C=\big\{xy+yz+xz=t=w=0\big\}\subset Q$,
where $x$, $y$, $z$, $t$, $w$ are homogeneous coordinates on $\mathbb{P}^4$.

Now, denote by
$P_1=[1:0:0:0:0]$, $P_2=[0:1:0:0:0]$, and $P_3=[0:0:1:0:0]$, and observe these points lie on $C$.
Let $\theta\colon Y\to Q$ be the blowup of $P_1,P_2$ and $P_3$, $\mathcal{C}$ the proper transform on $Y$ of $C$,
and $\eta\colon V\to Y$ the blowup of $\mathcal C$.  Denote by $E$ the $\eta$-exceptional surface.
Then $V$ is the unique smooth Fano 3-fold in the deformation family \textnumero 5.1,
and $E\cong\mathbb{P}^1\times\mathbb{P}^1$ with $E\vert_{E}\sim\mathcal{O}_{E}(-1,-1)$.

This implies that $-K_Y$ is nef and big,
and $\mathcal{C}$ is the only curve in $Y$ that has trivial intersection with $-K_Y$.
Hence, for $m\in\mathbb{N}$ large enough, the linear system $|-mK_Y|$ gives a birational morphism $\pi\colon Y\to X$
that contracts  $\mathcal{C}$.
Then $X$ is a singular Fano $3$-fold with an ordinary double point at $\pi(\mathcal{C})$.

The $3$-fold $X$ can also be described as a subscheme of $\mathbb{P}^2\times\mathbb{P}^2\times\mathbb{P}^2$.
Indeed, consider the map $\zeta\colon Q\dasharrow\mathbb{P}^2\times\mathbb{P}^2\times\mathbb{P}^2$ given by
$$
[x:y:z:t:w]\mapsto\big([z:t:w],[y:t:w],[x:t:w]\big).
$$
Then $\zeta\circ\theta$ is a morphism that contracts $\mathcal{C}$ to the point $([1:0:0],[1:0:0],[1:0:0])$,
and $-K_Y$ is rationally equivalent to the pullback of the divisor of degree $(1,1,1)$  via~$\zeta\circ\theta$.
Moreover, the image of the 3-fold $Y$ under $\zeta\circ\theta$ is contained in the locus defined by \eqref{equation:3-13-non-toric}.
Using symbolic computer packages, it can be checked that \eqref{equation:3-13-non-toric} defines an integral $3$-fold,
with singular locus $([1:0:0],[1:0:0],[1:0:0])$, which is an ordinary double point.
We see that $\zeta\circ\theta(Y)$ is given by \eqref{equation:3-13-non-toric}, and $X$ can be identified with $\zeta\circ\theta(Y)$.

We have $-K_Y\sim\pi^*(-K_X)$, so  $-K_X^3=-K_Y^3=30$.
Using \cite[Lemma~5.16]{CPS2019}, we get
$$
\mathrm{Aut}^0(X)\cong\mathrm{Aut}^0(Y)\cong\mathrm{Aut}^0(V)\cong\mathbb{C}^\ast,
$$
so $X$ is not toric.
Since $\mathrm{rk}\,\mathrm{Pic}(X)=3=\mathrm{rk}\,\mathrm{Cl}(X)-1$,
$X$ is maximally non-factorial~\cite{CKMS}.

Now, using \cite{JahnkeRadloff2011} and the classification of smooth Fano 3-folds,
we see that $X$ admits a smoothing to a smooth Fano $3$-fold in the deformation family \textnumero 3.13.
One such smoothing can be constructed explicitly as follows. Following Eisenbud--Buchsbaum theory \cite{BuchsbaumEisenbud},
let~$X_{a,b}$ be the codimension three subscheme in $\mathbb{P}^2\times\mathbb{P}^2\times\mathbb{P}^2$
that is given by vanishing of all $4\times 4$ diagonal Pfaffians of the skew-symmetric matrix
$$
\begin{pmatrix}
0 & -2x_1y_1-x_2y_3-x_3y_2 & 2x_1z_1-x_2z_3-x_3z_2 & bx_3 & x_2\\
2x_1y_1+x_2y_3+x_3y_2 & 0 & -2y_1z_1-y_2z_3-y_3z_2 & by_3& y_2\\
-2x_1z_1+x_2z_3+x_3z_2 & 2y_1z_1+y_2z_3+y_3z_2 & 0 & bz_3 & z_2\\
-bx_3 & -by_3 & -bz_3 & 0 & a\\
-x_2 & -x_2 & -z_2 & -a & 0\\
\end{pmatrix}.
$$
In particular, these $5$ equations are the vanishing of $\mathtt{Pf}_1,\dots,\mathtt{Pf}_5$, where

\begin{equation}
\label{equation:3-13-smoothing}
\left\{\aligned
&\mathtt{Pf}_1=(b-a)x_2y_3-(b+a)x_3y_2-2ax_1y_1,\\
&\mathtt{Pf}_2=(b-a)y_2z_3-(b+a)y_3z_2-2ay_1z_1,\\
&\mathtt{Pf}_3=(b-a)x_2z_3-(b+a)x_3z_2+2ax_1z_1,\\
&\mathtt{Pf}_4=bx_1y_1z_3+bx_1y_3z_1+bx_3y_1z_1+bx_3y_2z_3,\\
&\mathtt{Pf}_5=x_1y_1z_2+x_1y_2z_1+x_2y_1z_1+x_2y_3z_2,
\endaligned
\right.
\end{equation}
where $([x_0:x_1:x_2],[y_0:y_1:y_2],[z_0:z_1:z_2])$ are
coordinates on $\mathbb{P}^2\times\mathbb{P}^2\times\mathbb{P}^2$.

There are two interesting relations in the ideal generated by these Pfaffian equations:
\begin{align*}
2a\mathtt{Pf}_4&=-b(z_2\mathtt{Pf}_1-y_3\mathtt{Pf}_3+z_3\mathtt{Pf}_1),\\
2a\mathtt{Pf}_5&=-(z_2\mathtt{Pf}_1-y_2\mathtt{Pf}_3+x_2\mathtt{Pf}_2).
\end{align*}
Hence, as long as $a\neq 0$ we obtain a complete intersection ideal generated by $\mathtt{Pf}_1,\mathtt{Pf}_2,\mathtt{Pf}_3$.
If $[a:b]\ne[1:\pm 1]$, then $X_{a,b}$ is an integral $3$-fold.
Further, if $[1:\pm 1]\ne[a:b]\ne[0:1]$, then  $X_{a,b}$ is a smooth complete intersection,
which is a  Fano 3-folds in the family \textnumero 3.13.
But $X_{0,1}\cong X$ is not a complete intersection,
and its smoothing is clear from \eqref{equation:3-13-smoothing}.

In the remaining part of this section, we will prove that the 3-fold $X$ is K-polystable.
The proof of this result easily follows from \cite[\S 5.23]{Book}.

\paragraph*{\bf Description of the automorphism group}
First, we set
$$
G=\Big\{g\in\mathrm{Aut}(Q)\ \big\vert\ g(C)=C\ \mathrm{and} \ g\big(\{P_1,P_2,P_3\}\big)=\{P_1,P_2,P_3\}\Big\}.
$$
Then $G\cong\mathfrak{S}_3 \times (\mathbb{C}^\ast\rtimes\mumu_2)$ and $\mathfrak{S}_3 \times (\mathbb{C}^\ast\rtimes\mumu_2)$ acts on the quadric $Q$ as follows:
\begin{itemize}
\item if $\sigma\in \mathfrak{S}_3$, then $\sigma$ permutes coordinates $x$, $y$, $z$,

\item if $\lambda\in\mathbb{C}^\ast$, then $\lambda$ acts by $[x:y:z:t:w]\mapsto[\lambda x:\lambda y:\lambda z:\lambda^2 t:w]$,

\item if $\iota\in\mumu_2$, then $\iota$ acts by $[x:y:z:t:w]\mapsto[x:y:z:w:t]$.
\end{itemize}
This action lifts to $Y$. Therefore, we can identify $G$ with a subgroup of the group $\mathrm{Aut}(Y)$.
One can also show that $\mathrm{Aut}(Y)=G$. Since $\mathrm{Aut}(X)\cong\mathrm{Aut}(Y)$,
we can identify $G$ with a subgroup of $\mathrm{Aut}(X)$.

\paragraph*{\bf Description of the $G$-invariant loci}
The conic $C$ is $G$-invariant. Let
$Z=\{x-z=y-z=xy+yz+xz+tw=0\}\subset\mathbb{P}^4$.
Then $Z$ is another smooth $G$-invariant conic that is contained in $Q$. Note that $C\cap Z=\varnothing$.

\begin{lemma}[{\cite[Lemma 5.117]{Book}}]
\label{lemma:5-1-points-curves}
The quadric $Q$ does not contain $G$-invariant points,
and the only irreducible $G$-invariant irreducible curves in $Q$ are the conics $C$ and $Z$.
\end{lemma}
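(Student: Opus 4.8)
The plan is to follow the template of the proof of Lemma~\ref{lemma:2-22-curves}, exploiting the torus $\Gamma\cong\mathbb{C}^\ast\subset G$ (the one-parameter subgroup $\{\lambda\}$) together with the involution $\iota$ and the permutation group $\mathfrak{S}_3$. Write $\rho_\lambda$ for the action of $\lambda\in\Gamma$ on coordinates; it has weights $(1,1,1,2,0)$ on $(x,y,z,t,w)$, so its fixed locus in $\mathbb{P}^4$ consists of the plane $\{t=w=0\}$ (weight $1$) together with the two isolated points $N=[0:0:0:1:0]$ (weight $2$) and $S=[0:0:0:0:1]$ (weight $0$). Since $\{t=w=0\}\cap Q=C$ and both $N,S$ lie on $Q$, the $\Gamma$-fixed locus on $Q$ is exactly $C\sqcup\{N\}\sqcup\{S\}$.

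For the assertion on fixed points, any $G$-fixed point of $Q$ is in particular $\Gamma$-fixed, hence lies in $C\cup\{N,S\}$. Now $\iota$ swaps $t$ and $w$, so it interchanges $N$ and $S$, and neither is $G$-fixed; and $\mathfrak{S}_3$ acts on the plane $\{t=w=0\}$ through the permutation representation on $(x,y,z)$, whose only fixed point is $[1:1:1]$, which does not lie on $C$ because $1+1+1\neq0$. Thus $C$ carries no $\mathfrak{S}_3$-fixed point, and $Q$ has no $G$-fixed point.

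Next let $\mathcal{C}\subset Q$ be a $G$-invariant irreducible curve. If $\Gamma$ fixes $\mathcal{C}$ pointwise, then $\mathcal{C}$ lies in the $\Gamma$-fixed locus, whose only one-dimensional component is $C$, so $\mathcal{C}=C$. Otherwise $\Gamma$ acts on $\mathcal{C}$ with finite kernel, so $\mathcal{C}$ is the closure of a general orbit and is rational, with its two $\Gamma$-fixed limit points lying in $C\cup\{N,S\}$. A direct check gives $\iota\rho_\lambda\iota=\rho_{\lambda^{-1}}$, so $\iota$ inverts $\Gamma$ and hence interchanges the two limit points; since $\iota$ fixes the plane $\{t=w=0\}$ and so $C$ pointwise while swapping $N,S$, two distinct limits cannot meet $C$, forcing them to be exactly $N$ and $S$. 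Writing a general point of $\mathcal{C}$ as $P=[a:b:c:d:e]$, the limits $N,S$ force $d\neq0$ and $e\neq0$, while $(a,b,c)\neq(0,0,0)$ since otherwise $\mathcal{C}$ would be the line $\overline{NS}$, which is not contained in $Q$. Imposing $\mathfrak{S}_3$-invariance of $\overline{\mathrm{Orb}_\Gamma(P)}$ (note $\mathfrak{S}_3$ commutes with $\Gamma$ and fixes $d,e$), the requirement $\sigma(P)\in\overline{\mathrm{Orb}_\Gamma(P)}$ gives $\mu=1$ on the $w$-coordinate, then $\lambda^2=1$ on the $t$-coordinate, and finally $a=b=c$ (the sign $\lambda=-1$ being incompatible with a nonzero $(a,b,c)$ permuted by $\mathfrak{S}_3$). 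Together with $P\in Q$, i.e.\ $3a^2+de=0$, this says precisely $P\in Z$; as $Z$ is an irreducible $\Gamma$-invariant conic, we conclude $\mathcal{C}=Z$.

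The computations are elementary; the only step requiring care is the case analysis for the limit points, where the interplay between $\iota$ (inverting $\Gamma$, hence swapping source and sink) and the pointwise $\iota$-fixedness of $C$ is exactly what excludes curves joining a point of $C$ to $N$, to $S$, or to another point of $C$, leaving $\{N,S\}$ as the only option. Since the statement is quoted as \cite[Lemma 5.117]{Book}, one may of course simply cite it; the sketch above records why the conics $C$ and $Z$ are the only $G$-invariant irreducible curves.
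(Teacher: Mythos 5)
Your argument is correct. Note that the paper itself offers no proof of this lemma: it is quoted directly from \cite[Lemma 5.117]{Book}, and the analogous statements elsewhere in the paper (Lemmas~\ref{lemma:2-22-curves} and \ref{lemma:4-13-curves}) are handled by the template ``either $\Gamma$ fixes $\mathcal{C}$ pointwise, or $\mathcal{C}$ is rational and the involution fixes a point $P\in\mathcal{C}$ not fixed by $\Gamma$, so $\mathcal{C}=\overline{\mathrm{Orb}_\Gamma(P)}$,'' followed by an inspection of the fixed locus of the involution. You follow the same overall template but replace the ``an involution of a rational curve has a fixed point'' step by a source--sink analysis: from $\iota\rho_\lambda\iota=\rho_{\lambda^{-1}}$ you deduce that $\iota$ swaps the two limit points of the open $\Gamma$-orbit, and since $\iota$ fixes $C$ pointwise while swapping $N$ and $S$, the limits must be exactly $\{N,S\}$; then $\mathfrak{S}_3$-invariance forces $a=b=c$ because the permutation representation of $\mathfrak{S}_3$ on $\mathbb{C}^3$ contains no copy of the sign character. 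This is a clean and slightly more systematic route: it classifies the orbit closures directly from the weight data $(1,1,1,2,0)$ rather than hunting for fixed points of the involution, and it makes the exclusion of curves joining $C$ to $N$ or $S$ transparent. Two facts you use implicitly are both standard and harmless, but worth a line if you write this up: the two limit points of a non-constant orbit of a linear $\mathbb{C}^\ast$-action on $\mathbb{P}^n$ are distinct (they lie in eigenspaces of different weights, here $0$, $1$, $2$), and $\sigma(P)$ lies in the open orbit rather than at a limit point (immediate, since its $t$- and $w$-coordinates are the nonzero $d$ and $e$).
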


Let $\phi_C\colon Y_C\to Q$ and $\phi_Z\colon Y_Z\to Q$ be the blowup of the conics $C$ and $Z$, respectively.
Let $F_C$ and $F_Z$ be the exceptional surfaces of the blowups $\phi_C$ and $\phi_Z$, respectively.
Then the action of the group $G$ on the quadric $Q$ lifts to actions on $Y_C$ and $Y_Z$, with surfaces $F_C$ and $F_Z$ being $G$-invariant prime divisors over $Q$.

\begin{lemma}[{\cite[Lemma 5.118]{Book}}]
\label{lemma:5-1-blowup-Q-at-C-and-Z}
Let $\mathbf{F}$ be a $G$-invariant prime divisor over $Q$ such that $C_Q(\mathbf{F})$ is not a surface.
Then either $C_Q(\mathbf{F})=C$ and $C_{Y_C}(\mathbf{F})=F_C$,
or $C_Q(\mathbf{F})=Z$ and $C_{Y_Z}(\mathbf{F})=F_Z$.
\end{lemma}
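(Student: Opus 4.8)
The plan is to first locate the center of $\mathbf{F}$ on $Q$, then transfer the problem to the exceptional divisor of the appropriate blow-up and exclude a one-dimensional center there by analysing the induced $G$-action on the normal bundle of the relevant conic.

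First I would observe that the center $W=C_Q(\mathbf{F})$ is irreducible, being the image of the irreducible divisor $\mathbf{F}$, and $G$-invariant, since $\mathbf{F}$ is. By hypothesis $W$ is not a surface, and by Lemma~\ref{lemma:5-1-points-curves} the quadric $Q$ has no $G$-fixed point, so $W$ cannot be a point; hence $W$ is a curve, and Lemma~\ref{lemma:5-1-points-curves} forces $W=C$ or $W=Z$. Treating the case $W=C$ first (the case $W=Z$ being set up analogously with $\phi_Z$ and $F_Z$), put $Z_{Y_C}=C_{Y_C}(\mathbf{F})$. Since $\phi_C(Z_{Y_C})=C$ and $\phi_C^{-1}(C)=F_C$, the center $Z_{Y_C}$ is contained in $F_C$ and dominates $C$, so it is at least a curve; if it is a surface it must equal the irreducible surface $F_C$, which is exactly the claim. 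Thus everything reduces to excluding a $G$-invariant irreducible curve in $F_C$ dominating $C$, and this is the crux of the argument.

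To settle the case $W=C$ I would compute the action of the torus $\mathbb{C}^\ast\subset G$ near $C$. Since $C=\{xy+yz+xz=t=w=0\}$, the torus acts trivially on $C$, but in the two normal directions (along $t$ and along $w$) it acts with opposite nonzero weights; hence the ruled surface $F_C\to C$ carries a fibrewise nontrivial $\mathbb{C}^\ast$-action whose fixed locus is the pair of disjoint sections $S_t$ and $S_w$ cut out by these directions. A $\mathbb{C}^\ast$-invariant irreducible curve dominating $C$ must meet each fibre only in $\mathbb{C}^\ast$-fixed points, hence coincides with $S_t$ or $S_w$; but the involution $\iota\in G$ exchanges $t$ and $w$, and therefore swaps $S_t$ and $S_w$, so neither is $G$-invariant. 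This excludes a curve center and gives $Z_{Y_C}=F_C$.

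The case $W=Z$ is the genuinely different one, and it is where I expect the main difficulty. Here $\mathbb{C}^\ast$ no longer acts trivially on $Z=\{x=y=z,\,3x^2+tw=0\}$: it has exactly two fixed points $\bar P_t=[0{:}0{:}0{:}1{:}0]$ and $\bar P_w=[0{:}0{:}0{:}0{:}1]$, and a short weight computation shows that at each of them $\mathbb{C}^\ast$ acts as a scalar on $N_{Z/Q}$, so its fixed locus in $F_Z$ is precisely the two fibres over $\bar P_t$ and $\bar P_w$. Consequently a $\mathbb{C}^\ast$-invariant irreducible curve dominating $Z$ is the closure of a single torus orbit and meets the fibre over $\bar P_t$ in a single point; were it $G$-invariant, that point would be fixed by the commuting subgroup $\mathfrak{S}_3$. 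But $\mathfrak{S}_3$ acts on $N_{Z/Q,\bar P_t}\cong(\mathbb{C}x\oplus\mathbb{C}y\oplus\mathbb{C}z)/\langle(1,1,1)\rangle$ through its two-dimensional standard representation, whose projectivisation carries no $\mathfrak{S}_3$-fixed point — a contradiction. Hence no curve center survives in this case either, so $Z_{Y_Z}=F_Z$, which completes the proof.
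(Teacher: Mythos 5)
Your proposal is correct, and every step checks out against the geometry of the paper. Note that the paper itself gives no proof of this lemma: it is imported verbatim from \cite[Lemma~5.118]{Book}, so there is no in-paper argument to compare against; your write-up is a sound self-contained reconstruction in the same equivariant spirit as that source. The reduction is clean: irreducibility and $G$-invariance of the centre, absence of $G$-fixed points (Lemma~\ref{lemma:5-1-points-curves}), and the fact that the centre on $Y_C$ (resp.\ $Y_Z$) dominates $C$ (resp.\ $Z$) leave only the exclusion of an invariant curve in the exceptional surface. Your weight computations are right in both cases: on $C$ the torus acts trivially on the base and with weights $+1$, $-1$ on the $t$- and $w$-directions of $N_{C/Q}$ (check in the chart $x=1$), so the fixed locus of $\mathbb{C}^\ast$ in $F_C$ is the two sections $S_t$, $S_w$, which $\iota$ swaps; and at $\bar P_t$, $\bar P_w$ the torus acts on the ambient chart of $Q$ with scalar weights $\mp 1$, so it is scalar on $N_{Z/Q}$ and the fixed locus in $F_Z$ is exactly the two fibres. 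The one step deserving scrutiny --- that a $G$-invariant irreducible curve $\Gamma\subset F_Z$ dominating $Z$ is a single orbit closure meeting the fibre over $\bar P_t$ in exactly one point --- is also correct: $\Gamma$ must contain a non-fixed point, whose orbit closure is an irreducible curve inside $\Gamma$ and hence equals it, and its two boundary points lie one over $\bar P_t$ and one over $\bar P_w$; since $\mathfrak{S}_3$ commutes with $\mathbb{C}^\ast$ and fixes $\bar P_t$, that point would be a fixed point of $\mathbb{P}$ of the standard two-dimensional representation of $\mathfrak{S}_3$, which does not exist because that representation is irreducible and so admits no invariant line. This is exactly the kind of argument the cited lemma rests on, and your version is complete.
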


We have the following diagram
$$
\xymatrix@R=4mm{
&&&&V\ar@{->}[lld]_{\eta}\ar@{->}[rrd]^{\vartheta}&&&&\\
X&&Y\ar@{->}[rrd]_{\theta}\ar@{->}[ll]_{\pi}&&&&Y_C\ar@{->}[dll]^{\phi_C}\ar@{->}[rr]^{\xi}&&\mathbb{P}^1\\%
&&&&Q&&&&}
$$
where $\vartheta$ is the blowup of the fibres of the projection $E_C\to C$ over the points $P_1$, $P_2$, $P_3$,
and $\xi$ is a quadric bundle that is given by the linear system $|\phi_C^*(\mathcal{O}_{Q}(1))-E_C|$.

\begin{lemma}
\label{lemma:5-1-beta-E}
One has $\beta(E)=\frac{1}{10}$.
\end{lemma}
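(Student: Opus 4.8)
The plan is to compute $\beta(E)=A_X(E)-S_X(E)$ by transferring everything to the smooth resolution $V$. First I would record that, since $\mathcal C$ is the unique $(-K_Y)$-trivial curve, the morphism $\pi$ is crepant, $-K_Y\sim\pi^*(-K_X)$, so that $A_X(E)=A_Y(E)$, $S_X(E)=S_Y(E)$, and $\mu^*(-K_X)=\eta^*(-K_Y)$ where $\mu=\pi\circ\eta\colon V\to X$ is the blowup of the ordinary double point. The discrepancy is then immediate: blowing up the node (equivalently, blowing up the smooth curve $\mathcal C$ in the smooth threefold $Y$) gives $A_X(E)=2$. Thus it remains to evaluate $S_X(E)=\tfrac1{30}\int_0^\tau\mathrm{vol}\big(\mu^*(-K_X)-uE\big)\,du$.

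The intersection theory on $V$ is the engine. Writing $H$ for the pullback of $\mathcal O_Q(1)$ and $\widetilde G_1,\widetilde G_2,\widetilde G_3$ for the strict transforms of the three point-exceptional divisors of $\theta\colon Y\to Q$, I would compute $E^3=2$, then $\mu^*(-K_X)\cdot E^2=\mu^*(-K_X)^2\cdot E=0$ (both vanish because $\mu^*(-K_X)|_E\equiv 0$), and $\mu^*(-K_X)^3=(-K_X)^3=30$. Consequently $\big(\mu^*(-K_X)-uE\big)^3=30-2u^3$, which is the volume for as long as $\mu^*(-K_X)-uE$ stays nef.

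The heart of the argument is the Zariski decomposition past the nef threshold, and this is the step I expect to be hardest. The threshold is $u=2$: it is detected by the curve $f_i$, the strict transform of a line in the exceptional $\mathbb P^2$ over $P_i$ through the point $\mathcal C\cap\widetilde G_i$, for which $\big(\mu^*(-K_X)-uE\big)\cdot f_i=2-u$. For $u\in[2,3]$ I expect the decomposition with negative part $N(u)=(u-2)(\widetilde G_1+\widetilde G_2+\widetilde G_3)$ and positive part $P(u)=3H-u\sum_i\widetilde G_i-uE$, which gives $P(u)^3=4u^3-18u^2+54$ and pseudoeffective threshold $\tau=3$, where $P(3)=3\big(H-\sum_i\widetilde G_i-E\big)$ has cube $0$. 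The delicate point will be confirming that this really is the Zariski decomposition: that $P(u)$ is nef throughout $[2,3]$ — one tests the rulings of $E$, the transforms of lines through the $P_i$, and the fibres $f_i$, finding $P(u)$ meets each non-negatively, with several of these meeting numbers vanishing simultaneously exactly at $u=3$ — and that no intermediate wall intervenes. This is precisely the birational geometry of the family-\textnumero 5.1 threefold $V$ displayed in the diagram above, and it runs parallel to the computation in \cite[\S 5.23]{Book}.

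Finally I would integrate over the two chambers:
\[
S_X(E)=\frac1{30}\left(\int_0^2(30-2u^3)\,du+\int_2^3(4u^3-18u^2+54)\,du\right)=\frac1{30}\big(52+5\big)=\frac{19}{10},
\]
so that $\beta(E)=A_X(E)-S_X(E)=2-\tfrac{19}{10}=\tfrac1{10}$, as claimed.
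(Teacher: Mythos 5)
Your proposal is correct and follows essentially the same route as the paper: you work on the same resolution $V$, use $A_X(E)=2$ together with the crepancy of $\pi$, find the volume $30-2u^3$ on the nef chamber $[0,2]$, and take exactly the paper's Zariski decomposition on $[2,3]$ with positive part $3H-u\big(E+E_1+E_2+E_3\big)$ and negative part $(u-2)\big(E_1+E_2+E_3\big)$, giving $\big(P(u)\big)^3=4u^3-18u^2+54$ and $S_X(E)=\tfrac{52+5}{30}=\tfrac{19}{10}$. Your added checks (the curves $f_i$ detecting the nef threshold $u=2$, and $P(3)\sim 3S$ with $P(3)^3=0$ detecting $\tau=3$) are sound refinements of steps the paper merely asserts.
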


\begin{proof}
Let $E_1$, $E_2$, $E_3$ be $\vartheta$-exceptional divisors that are mapped to $P_1$, $P_2$, $P_3$, respectively.
Set $H=(\theta\circ\eta)^*(\mathcal{O}_{Q}(1))$.
Then $\xi\circ\vartheta$ is given by the linear system $|H-E_1+E_2+E_3-E|$.
Let $S$ be a general surface in $|H-E_1+E_2+E_3-E|$. Take $u\in\mathbb{R}_{\geqslant 0}$. Then
$$
(\pi\circ\eta)^*(-K_X)-uE\sim_{\mathbb{R}}3H-2(E_1+E_2+E_3)-uE\sim_{\mathbb{R}} 3S+E_1+E_2+E_3+(3-u)E,
$$
which implies that $(\pi\circ\eta)^*(-K_X)-uE$ is pseudoeffective if and only if $u\leqslant 3$.
Moreover, the divisor  $(\pi\circ\eta)^*(-K_X)-uE$ is nef for $u\leqslant 2$.
Furthermore, if $u\in[2,3]$, the Zariski decomposition of the divisor $(\pi\circ\eta)^*(-K_X)-uE$  is
$$
(\pi\circ\eta)^*(-K_X)-uE\sim_{\mathbb{R}}\underbrace{3H-u(E+E_1+E_2+E_3)}_{\text{positive part}}+\underbrace{(u-2)\big(E_1+E_2+E_3\big)}_{\text{negative part}}.
$$
Now, we can compute $\beta(E)$. Note that
$H^3=2$, $H\cdot E^2=-2$, $E^3=2$, $E_1\cdot E^2=E_2\cdot E^2=E_3\cdot E^2=-1$, $E_1^3=E_2^3=E_3^3=1$,
and other triple intersections of the divisor $H,E,E_1,E_2,E_3$ are zero.
Now, by integrating we get $\beta(E)=\frac{1}{10}$.
\end{proof}

\begin{lemma}
\label{lemma:5-1-beta}
Let $\mathbf{F}$ be a $G$-invariant prime divisor in $V$. Then $\beta(\mathbf{F})\geqslant\frac{1}{20}$.
\end{lemma}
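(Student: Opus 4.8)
The plan is to separate the divisor $E$, which resolves the ordinary double point of $X$, from all other $G$-invariant prime divisors of $V$. Since $\pi\colon Y\to X$ is a crepant (small) resolution, every $G$-invariant prime divisor $\mathbf{F}\neq E$ on $V$ is the proper transform of a $G$-invariant prime divisor on $Y$; and as $\theta\colon Y\to Q$ merely blows up $P_1,P_2,P_3$, whose exceptional divisors are permuted by $\mathfrak{S}_3\subset G$, such an $\mathbf{F}$ is in fact the proper transform of a $G$-invariant irreducible surface $S\subset Q$. In particular $A_X(\mathbf{F})=A_Y(\mathbf{F})=1$, so it suffices to prove $S_X(\mathbf{F})\le\frac{19}{20}$; the case $\mathbf{F}=E$ is already settled by Lemma~\ref{lemma:5-1-beta-E}, which gives $\beta(E)=\frac{1}{10}\geqslant\frac{1}{20}$. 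By Lemma~\ref{lemma:5-1-points-curves} the only $G$-invariant curves in $Q$ are the conics $C$ and $Z$, and inspecting the $G$-action on the coordinates $x,y,z,t,w$ shows that the unique $G$-invariant hyperplane section is $S_{xyz}=\{x+y+z=0\}\cap Q\cong\mathbb{P}^1\times\mathbb{P}^1$ (the sections cut out by $tw$ and by $xy+yz+zx$ coincide on $Q$ and are reducible). Its proper transform satisfies $\widetilde{S}_{xyz}\sim H$.

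Next I would apply Remark~\ref{remark:Kento-divisorial} on the crepant resolution $Y$, for which $S_Y=S_X$. Taking as generators the proper transform $\widetilde{S}_{xyz}\sim H$ together with the proper transforms of the few irreducible $G$-invariant quadric sections of $Q$ (such as $\{x^2+y^2+z^2=0\}\cap Q$), I would verify $S_X<1$ for each and check that they generate the cone of $G$-invariant effective classes. The refinement of Remark~\ref{remark:Kento-divisorial} using $\lambda>\frac{4}{3}$, combined with the estimate $S_X(\mathbf{F})\le\frac{3}{4}\tau(\mathbf{F})$ from \eqref{equation:KentoKyoto} (which forces $S_X$ to be small once the degree of $\pi(S)$ is large), then reduces the problem to these finitely many low-degree generators together with $E$.

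For each surviving generator I would compute $S_X$ exactly, working on $V$ exactly as in the proof of Lemma~\ref{lemma:5-1-beta-E}: I write $(\pi\circ\eta)^*(-K_X)-u\mathbf{F}=3H-2(E_1+E_2+E_3)-u\mathbf{F}$, determine the pseudoeffective threshold $\tau$ together with the values of $u$ at which the positive part $P(u)$ of the Zariski decomposition changes chamber, and integrate $(P(u))^3$ using the intersection numbers $H^3=2$, $H\cdot E^2=-2$, $E^3=2$, $E_i\cdot E^2=-1$ and $E_i^3=1$ recorded in that proof. Comparing $\beta=1-S_X$ across the finite list then yields the claimed minimum $\frac{1}{20}$.

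The main obstacle is twofold. First, one must be certain that the classification of $G$-invariant irreducible surfaces in $Q$ is complete and that no low-degree surface I have overlooked produces a value of $\beta$ below $\frac{1}{20}$; this is precisely where Lemma~\ref{lemma:5-1-points-curves} and the explicit $G$-action on $x,y,z,t,w$ are indispensable, since together they pin down the admissible $G$-eigenforms and hence the admissible linear systems. Second, the Zariski decompositions on the non-toric threefold $V$ genuinely change chamber at several values of $u$, and tracking the negative part (which mixes $E$ and the $E_i$) across these chambers is delicate; the saving grace is that the full geometry of $V$---its Mori cone, the quadric-bundle structure given by $\xi$, and its intersection form---is available from \cite[\S5.23]{Book}, which reduces these computations to controlled bookkeeping.
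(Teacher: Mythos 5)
Your framing (split off $\mathbf{F}=E$ via Lemma~\ref{lemma:5-1-beta-E}, observe $A_X(\mathbf{F})=1$ for everything else, reduce to $S_X(\mathbf{F})\leqslant\frac{19}{20}$) matches the paper's starting point, but the core of your plan has a genuine gap, and it is exactly where you locate your "main obstacle". A classify-and-compute strategy cannot deliver the stated bound as you have set it up. First, Remark~\ref{remark:Kento-divisorial} only yields positivity $\beta>0$, never the quantitative estimate $\beta(\mathbf{F})\geqslant\frac{1}{20}$, so invoking it does not prove this lemma. Second, the list of $G$-invariant irreducible surfaces in $Q$ is far larger than "the few" you anticipate: the $\mathbb{C}^\ast$-weights force invariant sections to be spanned by eigenforms, and already in degree $2$ one finds the whole pencil $\{x^2+y^2+z^2=\mu(xy+yz+zx)\}\cap Q$, with invariant eigenforms (hence irreducible invariant surfaces, with varying multiplicities along $C$ and at $P_1,P_2,P_3$) in every degree; your truncation via \eqref{equation:KentoKyoto} is asserted but not carried out, and the borderline low-degree classes are precisely where it gives nothing. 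Third, and most seriously, the minimum over your proposed finite list would not be $\frac{1}{20}$: the class that actually produces the bound is $S\sim H-E-E_1-E_2-E_3$, the fibre of the quadric bundle $\xi\circ\vartheta$, which is a movable class and not itself a $G$-invariant prime divisor, so it never appears among your generators; the divisor $\widetilde{S}_{xyz}\sim H$ you single out has strictly larger $\beta$, and no member of your list attains $\frac{1}{20}$.

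The paper's proof sidesteps all of this with a monotonicity argument you are missing. Any $G$-invariant prime divisor $\mathbf{F}\neq E$ on $V$ satisfies $\mathbf{F}\sim aS+bE+c(E_1+E_2+E_3)$ with $a,b,c$ non-negative integers and $a\geqslant 1$ (no single $E_i$ is $G$-invariant, and $\mathbf{F}$ dominates a surface in $Q$). Since enlarging the subtracted effective divisor can only decrease volumes,
$$
\mathrm{vol}\big((\pi\circ\eta)^*(-K_X)-u\mathbf{F}\big)\leqslant\mathrm{vol}\big((\pi\circ\eta)^*(-K_X)-uS\big)
$$
for all $u\geqslant 0$, whence $S_X(\mathbf{F})\leqslant S_X(S)$ and $\beta(\mathbf{F})\geqslant\beta(S)$. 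Then a single two-chamber Zariski decomposition (negative part $uE$ for $u\in[0,2]$, then positive part $(3-u)H$ with negative part $uE+(u-2)(E_1+E_2+E_3)$ for $u\in[2,3]$) gives $S_X(S)=\frac{19}{20}$, i.e.\ $\beta(S)=\frac{1}{20}$, with no classification of invariant surfaces and no case analysis. Your correct individual observations (uniqueness of the invariant hyperplane section, $A_X(\mathbf{F})=1$, the intersection numbers on $V$) are all compatible with this, but without the comparison class $S$ and the monotonicity step your sketch does not close, and as written it would in fact report the wrong minimizer.
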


\begin{proof}
By Lemma \ref{lemma:5-1-beta-E}, we may assume that $\mathbf{F}\ne E$.
Let us use notations introduced in the proof of Lemma \ref{lemma:5-1-beta-E}. Then
$\mathbf{F}\sim aS+bE+c(E_1+E_2+E_3)$ for some non-negative integers $a$, $b$, $c$ such that $a>1$.
Therefore, we have $\beta(\mathbf{F})\geqslant \beta(S)$.

Let us compute $\beta(S)$. Take $u\in\mathbb{R}_{\geqslant 0}$. Then
$$
(\pi\circ\eta)^*(-K_X)-uS\sim_{\mathbb{R}}(3-u)H+(u-2)E+uF\sim_{\mathbb{R}}(3-u)S+3E+E_1+E_2+E_3,
$$
so that $(\pi\circ\eta)^*(-K_X)-uS$ is pseudoeffective if and only if $u\leqslant 3$.
If $u\in[0,2]$, then $uE$ is the negative part of the Zariski decomposition of this divisor,
which implies that that
$$
\mathrm{vol}\big((\pi\circ\eta)^*(-K_X)-uS\big)=\mathrm{vol}\big((\pi\circ\eta)^*(-K_X)-uS-uE\big)=u^3-18u+30.
$$
Similarly, if $u\in[2,3]$, the Zariski decomposition of the divisor $(\pi\circ\eta)^*(-K_X)-uS$  is
$$
(\pi\circ\eta)^*(-K_X)-uS
\sim_{\mathbb{R}}\underbrace{(3-u)H}_{\text{positive part}}+\underbrace{uE+(u-2)(E_1+E_2+E_3)}_{\text{negative part}},
$$
which gives $\mathrm{vol}((\pi\circ\eta)^*(-K_X)-uS)=2(3-u)^3$.
By integration we obtain $\beta(S)=\frac{1}{20}$.
\end{proof}

\begin{lemma}
\label{lemma:5-1-beta-F-Z}
Let $\mathbf{F}$ be a $G$-invariant prime divisor over $X$.
Then $\beta(\mathbf{F})\geqslant\frac{1}{20}$.
\end{lemma}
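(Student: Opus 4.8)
The plan is to show that there are very few $G$-invariant prime divisors over $X$ to consider, and that all but one have already been settled. Since $\mathbf{F}$ is a divisorial valuation on the function field of $X$, which is $G$-birational to $Q$ through $Y$, it has a well-defined centre on the projective variety $Q$, and since $\mathbf{F}$ is $G$-invariant this centre $C_Q(\mathbf{F})$ is a $G$-invariant irreducible subvariety. By Lemma~\ref{lemma:5-1-points-curves}, $Q$ carries no $G$-invariant points, so $C_Q(\mathbf{F})$ is either a $G$-invariant irreducible surface or one of the two conics $C$, $Z$. Throughout I would use the elementary fact that a divisorial valuation whose centre on a normal variety is a prime divisor coincides with the order of vanishing along that divisor.

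If $C_Q(\mathbf{F})$ is a surface $S$, then $\mathbf{F}=\mathrm{ord}_S$; since the only morphism in the diagram that is not an isomorphism in codimension one is the small contraction $\pi$, the strict transform $\widetilde{S}$ of $S$ on $V$ is a $G$-invariant prime divisor with $\mathrm{ord}_{\widetilde{S}}=\mathrm{ord}_S=\mathbf{F}$, so $\beta(\mathbf{F})\geq\frac{1}{20}$ by Lemma~\ref{lemma:5-1-beta}. If $C_Q(\mathbf{F})=C$, then Lemma~\ref{lemma:5-1-blowup-Q-at-C-and-Z} gives $C_{Y_C}(\mathbf{F})=F_C$, whence $\mathbf{F}=\mathrm{ord}_{F_C}$; because $\vartheta$ modifies $F_C$ only along curves, its strict transform $E$ on $V$ satisfies $\mathrm{ord}_{E}=\mathrm{ord}_{F_C}=\mathbf{F}$, and $\beta(\mathbf{F})=\frac{1}{10}$ by Lemma~\ref{lemma:5-1-beta-E}. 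Thus the only case not yet covered is $C_Q(\mathbf{F})=Z$, in which Lemma~\ref{lemma:5-1-blowup-Q-at-C-and-Z} forces $\mathbf{F}=\mathrm{ord}_{F_Z}$.

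It therefore remains to prove that $\beta(F_Z)\geq\frac{1}{20}$, the one genuinely new computation. Since $Z$ is disjoint from $C$ and from the points $P_1,P_2,P_3$, it lifts isomorphically to a smooth conic $Z_X$ in the smooth locus of $X$, and blowing up $Z_X$ is the extraction of $F_Z$; in particular $A_X(F_Z)=2$. I would work on the blowup $\psi\colon\widehat{V}\to X$ of $Z_X$ (or on a common $G$-equivariant model dominating both $\widehat{V}$ and $V$), determine the pseudoeffective threshold $\tau(F_Z)$ of $\psi^*(-K_X)-uF_Z$, compute the Zariski decomposition $\psi^*(-K_X)-uF_Z\sim_{\mathbb{R}}P(u)+N(u)$ on each of the resulting chambers exactly as in the proof of Lemma~\ref{lemma:5-1-beta-E}, and then integrate to get $S_X(F_Z)=\frac{1}{30}\int_0^{\tau}\bigl(P(u)\bigr)^3\,du$ and $\beta(F_Z)=2-S_X(F_Z)$.

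The main obstacle is precisely this Zariski-decomposition computation: the crude bound $S_X(F_Z)\leq\frac{3}{4}\tau(F_Z)$ of \eqref{equation:KentoKyoto} is too weak here (for $\tau$ of the expected size it would only give $S_X(F_Z)\leq\frac{9}{4}$, hence a negative $\beta$), so one must identify the $G$-invariant surfaces through $Z_X$ (for instance the $G$-invariant quadrics of Lemma~\ref{lemma:5-1-points-curves}) that enter the negative part $N(u)$ once $\psi^*(-K_X)-uF_Z$ ceases to be nef, and then verify the exact inequality $S_X(F_Z)\leq\frac{39}{20}$, which yields $\beta(F_Z)=2-S_X(F_Z)\geq\frac{1}{20}$. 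These are exactly the volume computations carried out for the smooth Fano threefold $V$ of family \textnumero 5.1 in \cite[\S 5.23]{Book}, from which the bound can be read off. Combining the four cases then gives $\beta(\mathbf{F})\geq\frac{1}{20}$ for every $G$-invariant prime divisor over $X$, which completes the proof that $X$ is K-polystable.
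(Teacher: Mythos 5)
Your reduction to three cases is exactly the paper's and is correct: by Lemma~\ref{lemma:5-1-points-curves} the centre $C_Q(\mathbf{F})$ is a $G$-invariant surface, or $C$, or $Z$; surfaces are disposed of by taking strict transforms on $V$ and invoking Lemma~\ref{lemma:5-1-beta}; the case $C_Q(\mathbf{F})=C$ gives $\mathbf{F}=\mathrm{ord}_{F_C}=\mathrm{ord}_{E}$ via Lemma~\ref{lemma:5-1-blowup-Q-at-C-and-Z} and is settled by Lemma~\ref{lemma:5-1-beta-E}; and the case $C_Q(\mathbf{F})=Z$ forces $\mathbf{F}=\mathrm{ord}_{F_Z}$. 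The genuine gap is in the one case you yourself identify as the new content: you never carry out the computation of $S_X(F_Z)$. You assert the target inequality $S_X(F_Z)\leqslant\frac{39}{20}$ and claim it can be ``read off'' from \cite[\S 5.23]{Book}, but the volume computations there are performed with respect to the anticanonical class of the \emph{smooth} Fano 3-fold of family \textnumero 5.1, with $(-K_V)^3=28$, whereas here one must compute $S_X$ with respect to $\pi^*(-K_X)$, with $(-K_X)^3=30$, on a model of the singular 3-fold $X$; the needed numbers are not in the cited source. Your pointer to ``the $G$-invariant quadrics of Lemma~\ref{lemma:5-1-points-curves}'' is also off: that lemma describes only $G$-invariant points and curves and supplies no candidate surfaces for the negative part, so your proposed Zariski-decomposition computation has no actual input as written.

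Moreover, the full chamber-by-chamber Zariski decomposition you plan is unnecessary, and this is precisely how the paper closes the case. On the blowup $\psi\colon\widetilde{Y}\to Y$ of the strict transform of $Z$ (your model; since $Z$ is disjoint from $C$ and from $P_1,P_2,P_3$, indeed $A_X(\mathbf{F})=2$), the divisor $(\pi\circ\psi)^*(-K_X)-u\mathbf{F}\sim_{\mathbb{R}}3\widetilde{H}-2(\widetilde{E}_1+\widetilde{E}_2+\widetilde{E}_3)-u\mathbf{F}$ is nef for $u\in[0,1]$; intersecting a putative pseudoeffective member with the square of the nef class at $u=1$ gives $24-8u\geqslant 0$, so the pseudoeffective threshold is at most $3$; and since the volume is non-increasing in $u$, the contribution over $[1,3]$ is at most $\frac{2}{30}\bigl(3\widetilde{H}-2(\widetilde{E}_1+\widetilde{E}_2+\widetilde{E}_3)-\mathbf{F}\bigr)^3=\frac{16}{15}$. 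Combined with $\frac{1}{30}\int_0^1(4u^3-18u^2+30)\,du=\frac{5}{6}$, this yields $S_X(\mathbf{F})\leqslant\frac{19}{10}$ and hence $\beta(\mathbf{F})\geqslant\frac{1}{10}$, stronger than your target. You are right that the bound \eqref{equation:KentoKyoto} alone is too weak here, but the correct fix is this monotonicity estimate, not an exact Zariski decomposition; as it stands, your treatment of the only new case rests on an unexecuted computation backed by a citation that does not contain it.
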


\begin{proof}
Let $\psi\colon \widetilde{Y}\to Y$ be the blowup of the strict transform on $Y$ of the conic $Z$.
By Lemmas \ref{lemma:5-1-blowup-Q-at-C-and-Z} and \ref{lemma:5-1-beta},
we may assume that $\mathbf{F}$ is the $\psi$-exceptional surface.

Let $\widetilde{H}=(\theta\circ\psi)^*(\mathcal{O}_{Q}(1))$,
let $\widetilde{E}_1$, $\widetilde{E}_2$, $\widetilde{E}_3$ be strict transforms
on $\widetilde{Y}$ of the $\theta$-exceptional divisors that are mapped $P_1$, $P_2$, $P_3$, respectively.
Then
$$
(\pi\circ\psi)^*(-K_X)-u\mathbf{F}\sim_{\mathbb{R}}3\widetilde{H}-2\big(\widetilde{E}_1+\widetilde{E}_2+\widetilde{E}_3\big)-u\mathbf{F},
$$
and $(\pi\circ\psi)^*(-K_X)-u\mathbf{F}$ is nef for $u\in[0,1]$.
Thus, if $(\pi\circ\psi)^*(-K_X)-u\mathbf{F}$ is pseudoeffective, then
$$
24-8u=\big((\pi\circ\psi)^*(-K_X)-u\mathbf{F}\big)\cdot\big((\pi\circ\psi)^*(-K_X)-\mathbf{F}\big)^2\geqslant 0.
$$
Hence, the divisor $(\pi\circ\psi)^*(-K_X)-u\mathbf{F}$ is not pseudoeffective for $u>3$.
Then
\begin{multline*}
S_X\big(\mathbf{F}\big)\leqslant\frac{1}{30}\int\limits_0^1\big(3\widetilde{H}-2\big(\widetilde{E}_1+\widetilde{E}_2+\widetilde{E}_3\big)-u\mathbf{F}\big)^3du+\frac{1}{30}\int\limits_{1}^3\mathrm{vol}\big((\pi\circ\psi)^*(-K_X)-\mathbf{F}\big)du=\\
\quad\quad=\frac{1}{30}\int\limits_0^14u^3-18u^2+30du+\frac{2\big(3\widetilde{H}-2\big(\widetilde{E}_1+\widetilde{E}_2+\widetilde{E}_3\big)-\mathbf{F}\big)^3}{30}=\frac{5}{6}+\frac{16}{15}=\frac{19}{10},\quad
\end{multline*}
which implies that $\beta(\mathbf{F})\geqslant\frac{1}{20}$.
\end{proof}

Now, using \cite[Corollary~4.14]{Zhuang}, we see that $X$ is K-polystable.

\end{document}